\theoremstyle{plain}
\newtheorem{thm}{Theorem}[section]
\newtheorem{lem}[thm]{Lemma}
\newtheorem{prop}[thm]{Proposition}
\newtheorem{cor}[thm]{Corollary}
\newcommand{\thistheoremname}{}
\newtheorem{genericthm}[thm]{\thistheoremname}
  \newtheorem*{genericthm*}{\thistheoremname}
\newenvironment{namedthm*}[1]
  {\renewcommand{\thistheoremname}{#1}%
   \begin{genericthm*}}
  {\end{genericthm*}}
\theoremstyle{definition}
\newtheorem{defn}{Definition}[section]
\newtheorem{ass}{Assumption}
\theoremstyle{remark}
\newtheorem{rem}{Remark}[section]
\newtheorem{exam}[rem]{Example}
\newtheorem*{claim}{Claim}
\newtheorem*{coms}{Comments}
\newcommand{\intt}{\int\limits}
\newcommand{\N}{\mathbb{N}}
\newcommand{\R}{\mathbb{R}}
\newcommand{\C}{\mathbb{C}}
\newcommand{\Z}{\mathbb{Z}}
\newcommand{\D}{\mathbb{D}}
\newcommand{\E}{\mathbb{E}}
\newcommand{\PP}{\mathbb{P}}
\newcommand{\F}{\mathfrak{F}}
\newcommand{\XX}{\mathcal{X}}
\newcommand{\ZZ}{\mathcal{Z}}
\newcommand{\VV}{\mathcal{V}}
\newcommand{\supp}{\mathrm{supp}}
\newcommand{\Var}{\mathrm{Var}}
\newcommand{\n}{\|}
\newcommand{\Ran}{\mathrm{Ran}}
\newcommand{\loc}{\mathrm{loc}}
\newcommand \tr {{\mathrm{tr}}}
\newcommand{\Diff}{\mathrm{Diff}}
\newcommand{\ev}{\mathrm{ev}}
\newcommand \Conf {{\mathrm {Conf}}}
\newcommand \et { \text{\,\,and\,\,}}
\newcommand \half{\frac{1}{2}}
\def\titlerunning#1{\gdef\titrun{#1}}
\def\author#1{\gdef\autrun{\def\and{\unskip, }#1}\gdef\@author{#1}}
\def\address#1{{\def\and{\\\hspace*{18pt}}\renewcommand{\thefootnote}{}%
\footnote {#1}}%
\markboth{\autrun}{\titrun}}
\def\email#1{e-mail: #1}
\def\subjclass#1{{\renewcommand{\thefootnote}{}%
\footnote{\emph{Mathematics Subject Classification (2010):} #1}}}
\def\keywords#1{\par\medskip
\noindent\textbf{Keywords.} #1}
\begin{document}

\baselineskip=17pt

\titlerunning{Determinantal processes and holomorphic function spaces}

\title{Determinantal point processes associated with Hilbert spaces of holomorphic functions}

\author{Alexander I. Bufetov, Yanqi Qiu}

\date{}

\maketitle

\address{Alexander I. Bufetov: Aix-Marseille Universit{\'e}, Centrale Marseille, CNRS, I2M, UMR7373,  39 Rue F. Joliot Curie 13453, Marseille Cedex 13, France; Steklov Institute of Mathematics, Moscow, Russia; Institute for Information Transmission Problems, Moscow, Russia; National Research University Higher School of Economics, Moscow, Russia;  The Chebyshev Laboratory, Saint-Petersburg State University, Saint-Petersburg, Russia; \email{bufetov@mi.ras.ru}}

\address{Yanqi Qiu: Aix-Marseille Universit{\'e}, Centrale Marseille, CNRS, I2M, UMR7373,  39 Rue F. Joliot Curie 13453, Marseille Cedex 13, France;  \email{yqi.qiu@gmail.com}}

\subjclass{Primary 60G55; Secondary 30H20, 30B20}

\begin{abstract}
We study determinantal point processes on $\C$ induced by the reproducing kernels of generalized Fock spaces as well as those on the unit disc $\D$ induced by the reproducing kernels of generalized Bergman spaces.  In the first case, we show that all reduced Palm measures {\it of the same order} are equivalent.  The Radon-Nikodym derivatives are computed explicitly using regularized multiplicative functionals. We also show that these determinantal point processes are rigid in the sense of Ghosh and Peres, hence reduced Palm measures {\it of different orders} are singular. In the second case, we show that all reduced Palm measures, {\it of all orders}, are equivalent. The Radon-Nikodym derivatives are computed using regularized multiplicative functionals associated with certain Blaschke products.   The quasi-invariance of these determinantal point processes under the group of diffeomorphisms with compact supports follows as a corollary.

\keywords{Determinantal point processes, Palm measures, generalized Fock spaces,  generalized Bergman spaces, regularized multiplicative functionals, rigidity.}
\end{abstract}

\tableofcontents

\input xy
\xyoption{all}

\section{Introduction}\label{sec-intro}

\subsection{Main results}

\subsubsection{The case of $\C$}

Let $\psi: \C \rightarrow \R$ be a $C^2$-smooth function and equip the complex plane $\C$ with the measure $e^{-2\psi(z)} d\lambda(z)$, where $d\lambda$ is the Lebesgue measure. Assume that there exist positive constants $m, M > 0$ so that
\begin{align}\label{sub-h}
m \le \Delta \psi \le M,
\end{align}
where $\Delta$ is the Euclidean Laplacian . 

Denote by $\mathscr{F}_\psi$ the generalized Fock space with respect to the weight $e^{-2 \psi(z)}$ and let  $B_\psi$ be the reproducing kernel of $\mathscr{F}_\psi$, whose definition is recalled in Definition \ref{defn-fock}. The condition \eqref{sub-h} implies in particular the useful Christ \cite{Christ} pointwise estimate for the reproducing kernel $B_\psi$, see Theorem \ref{Christ} below.

By a theorem due to Macch\`i and Soshnikov \cite{DPP-M}, \cite{DPP-S} and Shirai-Takahashi \cite{ShirTaka0}, the kernel $B_\psi$ induces  a  determinantal point process, denoted by $\PP_{B_\psi}$,  on the complex plane $\C$ (with the background measure $e^{-2 \psi(z)} d\lambda(z)$). For more background on determinantal point processes, see, e.g. \cite{DPP-HKPV}, \cite{DPP-L}, \cite{DPP-S}, \cite{DPP-M} and \S 2 below.

Let  $\mathfrak{p} \in \C^\ell $ and   $\mathfrak{q}\in \C^k$ be two tuples of {\it distinct} points in $\C$. Denote by $\PP_{B_\psi}^{\mathfrak{p}}$ and  $\PP_{B_\psi}^{\mathfrak{q}}$ the reduced Palm measures of $\PP_{B_\psi}$ conditioned at $\mathfrak{p}$ and $\mathfrak{q}$ respectively. For the definition, see, e.g. \cite{Kallenberg}, here, we follow the notation  and conventions of \cite{BQS}.

Our first main result is that, under the assumption \eqref{sub-h}, Palm measures $\PP_{B_\psi}^{\mathfrak{p}}$ and $\PP_{B_\psi}^{\mathfrak{q}}$ of the same order are equivalent.  

\begin{thm}[Palm measures of the same order]\label{thm-A2}
Let $\psi$ satisfy \eqref{sub-h} and  let $\mathfrak{p}, \mathfrak{q} \in \C^\ell $ be any two tuples of distinct points in $\C$. 
Then  
\begin{itemize}
\item[1)] The limit
\begin{align*}
\Sigma_{\mathfrak{p}, \mathfrak{q}}(\ZZ): =  \lim_{ R \to \infty} & \bigg\{ \sum_{z \in \ZZ: | z | \le R }\log \left| \frac{(z-p_1) \dots (z-p_\ell)}{(z - q_1) \dots (z - q_\ell) } \right|
\\
 & -    \E_{\PP_{B_\psi}^{\mathfrak{q}}} \sum_{z \in \ZZ: | z | \le R }\log \left| \frac{(z-p_1) \dots (z-p_\ell)}{(z - q_1) \dots (z - q_\ell) } \right|\bigg\}
\end{align*}
exists for $\PP_{B_\psi}^{\mathfrak{q}}$-almost every configuration $\ZZ$ and the function  $\ZZ \rightarrow e^{2 \Sigma_{\mathfrak{p}, \mathfrak{q}}(\ZZ)}$ is integrable with respect to  $\PP_{B_\psi}^{\mathfrak{q}}$.
\item[2)] The Palm measures $\PP_{B_\psi}^{\mathfrak{p}}$ and $\PP_{B_\psi}^{\mathfrak{q}}$ are equivalent.
Moreover, for $\PP_{B_\psi}^{\mathfrak{q}}$-almost every configuration $\ZZ$, we have
\begin{align}\label{p-q-rn}
\frac{d\PP_{B_\psi}^{\mathfrak{p}}}{d    \PP_{B_\psi}^{\mathfrak{q}} } (\ZZ) =  \frac{  e^{2\Sigma_{\mathfrak{p}, \mathfrak{q}}(\ZZ)}  }{\E_{\PP_{B_\psi}^{\mathfrak{q}} } (e^{2\Sigma_{\mathfrak{p}, \mathfrak{q}}})}.
\end{align}
\end{itemize}
\end{thm}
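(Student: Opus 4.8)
The plan is to realize the Radon--Nikodym derivative $d\PP_{B_\psi}^{\mathfrak p}/d\PP_{B_\psi}^{\mathfrak q}$ as a limit of ratios of ``finite-range'' analogues and to identify the regularizing series $\Sigma_{\mathfrak p,\mathfrak q}$ as the logarithm of this limit. First I would recall the standard description of reduced Palm measures of determinantal point processes: $\PP_{B_\psi}^{\mathfrak p}$ is again determinantal, with kernel $B_\psi^{\mathfrak p}$ obtained from $B_\psi$ by the conditioning procedure (subtract the finite-rank piece spanned by the reproducing kernels $B_\psi(\cdot,p_j)$), and similarly for $\mathfrak q$. The key algebraic observation, going back to Shirai--Takahashi and used heavily in \cite{BQS}, is that $B_\psi^{\mathfrak p}$ is the reproducing kernel of the subspace $\mathscr F_\psi \ominus \spann\{B_\psi(\cdot,p_1),\dots,B_\psi(\cdot,p_\ell)\}$, which coincides with the space of $f\in\mathscr F_\psi$ vanishing at $p_1,\dots,p_\ell$; multiplication by the polynomial $(z-p_1)\cdots(z-p_\ell)$ is, up to the weight, an (unbounded) isometry-like map relating this subspace to $\mathscr F_\psi$ with a modified weight. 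Thus the two Palm measures both ``live'' on $\mathscr F_\psi$ but with weights differing by the factor $|(z-p_1)\cdots(z-p_\ell)|^2/|(z-q_1)\cdots(z-q_\ell)|^2$.

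Next I would make this rigorous via the regularized multiplicative functional machinery. Write $g(z) = (z-p_1)\cdots(z-p_\ell)/\big((z-q_1)\cdots(z-q_\ell)\big)$, so that $|g|\to 1$ at infinity with logarithmic corrections, and consider the multiplicative functional $\Psi_g(\ZZ) = \prod_{z\in\ZZ}|g(z)|^2$, which does \emph{not} converge as a plain product — this is exactly why the additive regularization $\Sigma_{\mathfrak p,\mathfrak q}$ enters. The plan is: (i) show, using Christ's pointwise estimate (Theorem \ref{Christ}) for $B_\psi$ together with the fact that $\int_{|z|\le R}\log|g(z)|\,\big(\text{one-point density of }\PP_{B_\psi}^{\mathfrak q}\big)\,d\lambda(z)$ grows at most logarithmically in $R$ while the variance of $\sum_{|z|\le R}\log|g(z)|$ stays bounded, that the centered sum in the definition of $\Sigma_{\mathfrak p,\mathfrak q}(\ZZ)$ converges a.s. and in $L^2$; (ii) show that $e^{2\Sigma_{\mathfrak p,\mathfrak q}}$ is integrable, via a uniform control of the Laplace-type functional $\E_{\PP_{B_\psi}^{\mathfrak q}}\exp(2\sum_{|z|\le R}\log|g(z)| - 2\E(\cdots))$ using the determinantal structure (the relevant quantity is a regularized Fredholm determinant $\det(I + (|g|^2-1)B_\psi^{\mathfrak q}\chi_R)$ together with its trace correction), and passing to the limit by a uniform-integrability argument. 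Then (iii) identify the limit: for the truncated problem, the change of kernel from $B_\psi^{\mathfrak q}\chi_R$ to the kernel corresponding to multiplying the weight by $|g|^2$ gives exactly the ratio of the two (normalized) determinantal measures restricted to $\{|z|\le R\}$, and this ratio is $\Psi_g\chi_R$ divided by its expectation; letting $R\to\infty$ and combining with the trace correction yields \eqref{p-q-rn}.

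To complete the argument I would verify that the measure $\big(e^{2\Sigma_{\mathfrak p,\mathfrak q}}/\E_{\PP_{B_\psi}^{\mathfrak q}}(e^{2\Sigma_{\mathfrak p,\mathfrak q}})\big)\,d\PP_{B_\psi}^{\mathfrak q}$ really equals $\PP_{B_\psi}^{\mathfrak p}$, not merely some measure absolutely continuous with respect to it; the cleanest route is to check equality of all finite-dimensional distributions (equivalently, of all correlation functions, or of $\E$ of all multiplicative functionals $\prod(1+\varphi)$ with $\varphi$ compactly supported), using that both sides are, after the identification above, the determinantal measure with kernel built from the weight-$|g|^2$-modified Fock space, and invoking the general fact that a determinantal point process is uniquely determined by its kernel. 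Symmetry in $\mathfrak p\leftrightarrow\mathfrak q$ then gives mutual absolute continuity, i.e. equivalence, and part 2).

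The main obstacle I anticipate is step (ii): because $|g(z)|-1$ decays only like $O(1/|z|)$, the relevant series $\sum_z(\log|g(z)|)$ is at the borderline of convergence, so the Fock-space analogues of the estimates in \cite{BQS} for the sine-kernel / Bergman cases must be redone carefully with Christ's estimate, and the uniform integrability of $e^{2\Sigma_{\mathfrak p,\mathfrak q},R}$ as $R\to\infty$ requires a genuinely quantitative bound on the regularized Fredholm determinants rather than a soft compactness argument. The rigidity of $\PP_{B_\psi}$ (proved elsewhere in the paper) is what forces $\ell=k$ here — for $\ell\ne k$ the number of points in a large disc differs by a deterministic constant between the two Palm measures, making them singular — so I would not expect any version of this equivalence to survive dropping the ``same order'' hypothesis.
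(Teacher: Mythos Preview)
Your proposal is essentially correct and follows the same strategy as the paper: relate the Palm subspaces via multiplication by $g_{\mathfrak p,\mathfrak q}$, define the regularized additive functional, control its variance via Christ's estimate, and upgrade to $L^1$-integrability of $e^{2\Sigma}$ by bounding regularized Fredholm determinants. The paper packages steps (i)--(iii) into an abstract result (Theorem~\ref{thm-C}) built around a class $\mathscr A_3(\Pi)$ in which the condition $\int|g-1|^2\Pi(x,x)\,d\mu<\infty$ of \cite{BQS} is relaxed to $\int|g-1|^3\Pi(x,x)\,d\mu<\infty$ together with $V(g)<\infty$ and a ``flat-cut'' condition \eqref{flat-cut}; your observation that $|g|-1=O(1/|z|)$ puts one exactly at the borderline is precisely why this third-moment refinement is needed.

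One small difference worth flagging: in your step (iii) you speak of truncating the \emph{kernel} $B_\psi^{\mathfrak q}\chi_R$ and comparing the restricted processes. The paper instead truncates the \emph{multiplier}, setting $g_n=1+(g-1)\chi_{E_n}$, so that each $g_n-1$ is compactly supported and the ordinary multiplicative-functional identity $\PP_{\Pi^{g_n}}=\overline{\Psi}_{g_n}\PP_\Pi$ holds exactly; one then passes to the limit using $\Pi^{g_n}\to\Pi^g$ in the locally trace-class topology and $\overline{\Psi}_{g_n}\to\overline{\Psi}_g$ in $L^1$ (Corollary~\ref{2-maps}). This avoids dealing with the non-projection $\chi_R\Pi\chi_R$ and makes the identification of the limit with $\PP_{B_\psi}^{\mathfrak p}$ immediate once one knows $\sqrt{g}\,\mathscr F_\psi(\mathfrak q)=\alpha\,\mathscr F_\psi(\mathfrak p)$ for a unimodular $\alpha$. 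Your route can presumably be made to work as well, but the multiplier-truncation is cleaner.
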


\begin{defn}[Ghosh \cite{Ghosh-sine}, Ghosh-Peres\cite{Ghosh-rigid}]\label{defn-rig}
A point process $\PP$ on $\C$ is said to be rigid if for any bounded open set $D \subset \C$ with Lebesgue-negligible boundary $\partial D$, there exists a function $F_{D}$ defined on the set of configurations, measurable with respect to the $\sigma$-algebra generated by the family of random variables $\{\#_A: A \subset \C \setminus D \text{\, bounded  and Borel}\}$, where $\#_A$ is defined by
$$
\#_A(\ZZ) = \text{ the cardinality of the finite set $\ZZ \cap A$,}
$$ 
such that
$$
\#_{D} (\ZZ) = F_{D} (\ZZ\setminus D), \text{\, for $\PP$-almost every configuration $\ZZ$ over $\C$.}
$$

\end{defn}

\begin{prop}[Rigidity]\label{thm-rigid}
Under the assumption \eqref{sub-h}, the determinantal point process $\PP_{B_\psi}$ is rigid in the sense of Ghosh and Peres.
\end{prop}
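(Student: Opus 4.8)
The plan is to invoke the rigidity criterion of Ghosh \cite{Ghosh-sine} and Ghosh--Peres \cite{Ghosh-rigid}: to prove that $\PP_{B_\psi}$ is rigid it suffices to produce, for every bounded open set $D\subset\C$ with $\Leb(\partial D)=0$, a sequence of bounded Lipschitz compactly supported functions $\Phi_n\colon\C\to[0,1]$ with $\Phi_n\equiv1$ on $D$ and $\Var_{\PP_{B_\psi}}\big(\sum_{z\in\ZZ}\Phi_n(z)\big)\to0$. Indeed, since $\Phi_n\equiv1$ on $D$ we have $\sum_{z\in\ZZ}\Phi_n(z)=\#_D(\ZZ)+\sum_{z\in\ZZ\setminus D}\Phi_n(z)$ for every $n$, where the second sum is measurable with respect to the restriction $\ZZ\setminus D$; if the variances tend to $0$ then, along a subsequence, $\sum_{z\in\ZZ}\Phi_n(z)-\E_{\PP_{B_\psi}}\big[\sum_z\Phi_n\big]\to0$ for $\PP_{B_\psi}$-almost every $\ZZ$, and hence $\#_D(\ZZ)$ is a $\PP_{B_\psi}$-a.s.\ limit of functions of $\ZZ\setminus D$, each measurable with respect to the $\sigma$-algebra generated by $\{\#_A:A\subset\C\setminus D\text{ bounded Borel}\}$; this is precisely the conclusion of Definition \ref{defn-rig} (the hypothesis $\Leb(\partial D)=0$ only serves to make $\#_D=\#_{\overline D}$ hold $\PP_{B_\psi}$-a.s., the first correlation measure being absolutely continuous). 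Everything therefore reduces to constructing the $\Phi_n$ and bounding the variance.

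For the variance I would use that $B_\psi$ is the reproducing kernel of the closed subspace $\mathscr{F}_\psi\subset L^2(\C,e^{-2\psi}d\lambda)$, so the associated integral operator is the orthogonal projection onto $\mathscr{F}_\psi$ and in particular $B_\psi(z,z)=\int_\C|B_\psi(z,w)|^2e^{-2\psi(w)}\,d\lambda(w)$. Feeding this identity into the standard variance formula for a linear statistic of a determinantal point process yields, for any bounded compactly supported $\Phi$,
\[
\Var_{\PP_{B_\psi}}\Big(\sum_{z\in\ZZ}\Phi(z)\Big)=\frac12\iint_{\C\times\C}|\Phi(z)-\Phi(w)|^2\,|B_\psi(z,w)|^2\,e^{-2\psi(z)-2\psi(w)}\,d\lambda(z)\,d\lambda(w).
\]
I would then invoke Christ's pointwise estimate (Theorem \ref{Christ}): under \eqref{sub-h} there are constants $C,\epsilon>0$ with $|B_\psi(z,w)|\,e^{-\psi(z)-\psi(w)}\le Ce^{-\epsilon|z-w|}$ for all $z,w\in\C$. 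Writing $\Phi(z)-\Phi(w)=-\int_0^1\nabla\Phi\big((1-t)z+tw\big)\cdot(w-z)\,dt$, applying Cauchy--Schwarz in $t$, substituting $u=(1-t)z+tw$ for each fixed $t$, and using the elementary Gamma-type integral $\int_\C|v|^2e^{-c|v|}\,d\lambda(v)$ (a constant multiple of $c^{-4}$), one arrives at a $\Phi$-uniform bound of the form
\[
\Var_{\PP_{B_\psi}}\Big(\sum_{z\in\ZZ}\Phi(z)\Big)\le C'\int_\C|\nabla\Phi(u)|^2\,d\lambda(u),\qquad C'=C'(C,\epsilon).
\]

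Finally I would choose the $\Phi_n$ to exploit the recurrence of the plane, i.e. that a disc in $\C$ can be capped by Lipschitz functions of arbitrarily small Dirichlet energy. Fix $a>0$ with $\overline D\subset B(0,a)$ and let $b_n\to\infty$; let $\Phi_n$ equal $1$ on $B(0,a)$, equal $0$ off $B(0,b_n)$, and equal $\big(\log b_n-\log|z|\big)/\log(b_n/a)$ on the annulus $\{a\le|z|\le b_n\}$ (smoothed near $|z|=a,b_n$ if a $C^1$ plateau is wanted, which affects the energy only by a bounded factor). Then $|\nabla\Phi_n(z)|\le|z|^{-1}/\log(b_n/a)$ on the annulus and $\nabla\Phi_n=0$ elsewhere, so
\[
\int_\C|\nabla\Phi_n|^2\,d\lambda\le\frac{1}{\big(\log(b_n/a)\big)^2}\int_{a\le|z|\le b_n}\frac{d\lambda(z)}{|z|^2}=\frac{2\pi}{\log(b_n/a)}\longrightarrow0.
\]
Combined with the variance bound this gives $\Var_{\PP_{B_\psi}}\big(\sum_z\Phi_n(z)\big)\to0$, and the criterion of the first paragraph finishes the proof. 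The step I expect to require the most care is the second one --- turning the exact variance identity, via Christ's estimate and the change of variables, into the clean Dirichlet-energy bound uniform in $\Phi$; once this is secured, rigidity is just the two-dimensional capacity fact, and the argument is the direct analogue of the Ghosh--Peres proof of rigidity for the Ginibre ensemble, with Theorem \ref{Christ} playing the role of the explicit Gaussian kernel.
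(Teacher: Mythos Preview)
Your proposal is correct and follows the same overall strategy as the paper: invoke the Ghosh--Peres criterion (Theorem \ref{thm-GP}), bound the variance of a linear statistic by the Dirichlet energy $\int_\C|\nabla\Phi|^2\,d\lambda$ via Christ's estimate, and then cap the domain with logarithmic cutoffs whose Dirichlet energy vanishes (your $\Phi_n$ and the paper's $\phi_\varepsilon$ are the same functions up to parametrization). The one genuine technical difference is in the proof of the variance--Dirichlet bound: the paper (Lemma \ref{lem-d-norm}) passes to the Fourier side via Plancherel and uses $|e^{i2\pi\langle\xi,\zeta\rangle}-1|\le2\pi|\xi||\zeta|$, whereas you stay in physical space, writing $\Phi(z)-\Phi(w)$ as a line integral of $\nabla\Phi$ and changing variables $(z,w)\mapsto(u,v)=((1-t)z+tw,\,w-z)$. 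Your route is slightly more elementary (no Fourier transform), and both give exactly the same inequality with a constant depending only on the $\delta$ in Theorem \ref{Christ}.
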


Proposition \ref{rigid-dis} in the Appendix now implies 
\begin{cor}[Palm measures of different orders]\label{cor-d-order}
Under the assumption \eqref{sub-h}, if $\ell \ne k$, then the reduced Palm measures $\PP_{B_\psi}^{\mathfrak{p}}$ and $\PP_{B_\psi}^{\mathfrak{q}}$ are mutually singular.
\end{cor}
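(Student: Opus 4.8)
The plan is to deduce Corollary \ref{cor-d-order} directly from the rigidity statement of Proposition \ref{thm-rigid}, using the abstract principle recorded in Proposition \ref{rigid-dis} of the Appendix. The key observation is that a reduced Palm measure $\PP_{B_\psi}^{\mathfrak{p}}$, conditioned at a tuple of $\ell$ distinct points, is the distribution of the configuration $\ZZ = \ZZ' \setminus \{p_1,\dots,p_\ell\}$ where $\ZZ'$ is distributed according to $\PP_{B_\psi}$ conditioned to contain the points $p_1,\dots,p_\ell$. First I would fix a bounded open set $D$ with Lebesgue-negligible boundary that contains all of $p_1,\dots,p_\ell$ but none of $q_1,\dots,q_k$ (such $D$ exists since the two tuples consist of distinct points and $\ell\ne k$). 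Since $\PP_{B_\psi}$ is rigid, the number of particles in $D$ is almost surely a measurable function of the configuration outside $D$; adding or removing finitely many points outside $D$ does not change this "outside" configuration, so the linear statistic $\#_D$ is, $\PP_{B_\psi}$-almost surely, equal to a fixed function $F_D$ of $\ZZ \setminus D$.

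Next I would track how this constraint transfers to the two Palm measures. Under $\PP_{B_\psi}^{\mathfrak{q}}$, almost every configuration $\ZZ$ satisfies $\ZZ \cap D$ disjoint from $\{q_1,\dots,q_k\}$ (since the $q_j\notin D$) and $\#_D(\ZZ) = F_D(\ZZ \setminus D)$, because adjoining the $q_j$'s to $\ZZ$ yields a typical configuration of $\PP_{B_\psi}$ and the values of $\#_D$ and of $\ZZ\setminus D$ are unchanged by this adjunction. Under $\PP_{B_\psi}^{\mathfrak{p}}$, on the other hand, almost every configuration $\ZZ$ satisfies $\#_D(\ZZ) + \ell = F_D(\ZZ \setminus D)$: here adjoining the $\ell$ points $p_i \in D$ increases the count in $D$ by exactly $\ell$ while leaving $\ZZ \setminus D$ unchanged, and the resulting configuration is again $\PP_{B_\psi}$-typical so it obeys the rigidity identity. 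Thus $\PP_{B_\psi}^{\mathfrak{p}}$ is supported on the set $\{\#_D = F_D(\cdot\setminus D) - \ell\}$ while $\PP_{B_\psi}^{\mathfrak{q}}$ is supported on the set $\{\#_D = F_D(\cdot\setminus D) - k\}$, and since $\ell \ne k$ these two sets are disjoint. This is precisely the content of the abstract Proposition \ref{rigid-dis}, which I would simply invoke with the above choice of $D$.

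The only genuine subtlety—and the step I would write most carefully—is the compatibility between the rigidity function $F_D$ for $\PP_{B_\psi}$ and the Palm measures: one needs that a $\PP_{B_\psi}^{\mathfrak{q}}$-typical configuration $\ZZ$, once the points $q_j$ are added back, lies in the full-measure set on which the rigidity identity $\#_D = F_D(\cdot \setminus D)$ holds. This follows from the defining relation between Palm measures and the original process (the disintegration of $\PP_{B_\psi}$ over the positions of $k$ of its points has $\PP_{B_\psi}^{\mathfrak{q}}$-translates as its fibers), combined with the fact that the $k$-point correlation function of $\PP_{B_\psi}$ is strictly positive at $\mathfrak{q}$—guaranteed by Christ's pointwise estimate (Theorem \ref{Christ}), which in particular gives $B_\psi(q,q) > 0$ and non-degeneracy of the relevant kernel minors. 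Granting this, and the analogous statement for $\mathfrak{p}$, mutual singularity of $\PP_{B_\psi}^{\mathfrak{p}}$ and $\PP_{B_\psi}^{\mathfrak{q}}$ when $\ell \ne k$ is immediate. Since the paper states that Proposition \ref{rigid-dis} already packages this argument, the proof reduces to the single sentence: apply Proposition \ref{rigid-dis} together with Proposition \ref{thm-rigid}.
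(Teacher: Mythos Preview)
Your final sentence---reduce to Proposition~\ref{rigid-dis} combined with Proposition~\ref{thm-rigid}---is exactly the paper's proof and is correct. However, the mechanism you sketch beforehand contains an error, and is in fact internally inconsistent. You choose $D$ to contain the $p_i$ but \emph{not} the $q_j$. Then, under $\PP_{B_\psi}^{\mathfrak{q}}$, adjoining the points $q_j$ (which lie \emph{outside} $D$) \emph{does} change the exterior configuration: it becomes $(\ZZ\setminus D)\cup\{q_1,\dots,q_k\}$, not $\ZZ\setminus D$ as you write. So the rigidity identity transported to $\PP_{B_\psi}^{\mathfrak{q}}$ actually reads
\[
\#_D(\ZZ)=F_D\bigl((\ZZ\setminus D)\cup\{q_1,\dots,q_k\}\bigr),
\]
which is not the same functional of $\ZZ$ as the one you obtain for $\PP_{B_\psi}^{\mathfrak{p}}$, and the two supporting sets are no longer manifestly disjoint. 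Your own summary line, that $\PP_{B_\psi}^{\mathfrak{q}}$ is supported on $\{\#_D=F_D(\cdot\setminus D)-k\}$, does not follow from the preceding computation, which gave $\#_D(\ZZ)=F_D(\ZZ\setminus D)$ with no shift by $k$.

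The fix---and what Proposition~\ref{rigid-dis} actually does---is to take the bounded set $B$ large enough to contain \emph{both} tuples $\mathfrak{p}$ and $\mathfrak{q}$. Then for each Palm measure the exterior configuration is genuinely unchanged when the conditioning points are reinserted, and one obtains $\#_B(\ZZ)=F_B(\ZZ\setminus B)-\ell$ under $\PP_{B_\psi}^{\mathfrak{p}}$ and $\#_B(\ZZ)=F_B(\ZZ\setminus B)-k$ under $\PP_{B_\psi}^{\mathfrak{q}}$; these events are disjoint when $\ell\neq k$. With that correction your exposition matches the appendix argument, and the one-line proof stands.
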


\begin{rem}
 In the particular case $\psi(z)  = \frac{1}{2}| z|^2$ (Ginibre point process), the results of Theorem \ref{thm-A2} and Corollary \ref{cor-d-order} were obtained in \cite{Osada-Shirai} with a different approach, where the authors used finite dimensional approximation by orthogonal polynomial ensembles. The rigidity in the case $\psi(z)  = \frac{1}{2}| z|^2$ is due to Ghosh and Peres \cite{Ghosh-rigid}, their original approach will be followed in our proof of Proposition \ref{thm-rigid}.
\end{rem}

\subsubsection{The case of $\D$}
In the case of Bergman spaces on the unit disc $\D$, the situation becomes quite different and the corresponding determinantal point processes in this case are not rigid. 

Consider a weight function $\omega:  \D\rightarrow \R^{+}$ and equip $\D$ with the measure $\omega(z)d\lambda(z)$. 
Denote by $\mathscr{B}_\omega$ the generalized Bergman space on $\D$ with respect to the weight $\omega$, and by $B_\omega$ its reproducing kernel, the definition is recalled in Definition \ref{defn-berg}. Assume moreover that $\omega$ satisfies 
\begin{align}\label{w-condition}
\int_\D (1 - | z|)^2 B_\omega(z,z) \omega(z)d\lambda(z)< \infty.
\end{align}
In \S \ref{sec-w-disk}, we will see that the condition \ref{w-condition} is satisfied for large class of weight function $\omega$ on $\D$, including most of the natural Bergman weights.

Again, by the theorem due to Macch\`i,  Soshnikov \cite{DPP-M}, \cite{DPP-S} and Shirai-Takahashi \cite{ShirTaka0}, the reproducing kernel $B_\omega$ induces a determinantal point process on $\D$ (with the background measure $\omega(z)d\lambda(z)$), which we denote by $\PP_{B_\omega}$.

Let  $\mathfrak{p} \in \D^\ell$ be an $\ell$-tuple of distinct points in $\D$ and denote by $\PP_{B_\omega}^{\mathfrak{p}}$ the reduced Palm measures of $\PP_{B_\omega}$  at $\mathfrak{p}$.

Under the assumption \eqref{w-condition}, we show, for any $\mathfrak{p} \in \D^\ell$ of distinct points in $\D$, the reduced Palm measure $\PP_{B_\omega}^{\mathfrak{p}}$ is equivalent to $\PP_{B_\omega}$. In particular, any two reduced Palm measures are equivalent. For the weight $\omega\equiv 1$, this result is due to Holroyd and Soo \cite{Soo}.

We now proceed to the statement of our main result in the case of $\D$. For an $\ell$-tuple $\mathfrak{p} = (p_1, \cdots, p_\ell)$ of distinct points in $\D$, set 
\begin{align}\label{def-bp}
 b_{\mathfrak{p}}(z)  =  \prod_{j=1}^\ell \frac{z-p_j}{1-\bar{p}_j z}.
 \end{align}
 
\begin{thm}\label{thm-B2}
Let $\omega$ be a weight such that  \eqref{w-condition} holds. Let  $\mathfrak{p} \in \D^\ell$ be an $\ell$-tuple of distinct points in $\D$.
Then 
\begin{itemize}
\item[1)]
The limit 
\begin{align}\label{S2}
S_{\mathfrak{p}}(\ZZ): = \lim_{ r \to 1^{-}}\left( \sum_{z \in \ZZ: | z | \le r }\log |b_{\mathfrak{p}}(z) | -    \E_{\PP_{B_\omega}} \sum_{z \in \ZZ: | z | \le r }\log |b_{\mathfrak{p}}(z) |\right)
\end{align}
exists for $\PP_{B_\omega}$-almost every configuration $\ZZ$ and the function  $\ZZ \rightarrow e^{2 S_{\mathfrak{p}}(\ZZ)}$ is integrable with respect to  $\PP_{B_\omega}$.  
\item[2)]
The Radon-Nikodym derivative $d \PP_{B_\omega}^{\mathfrak{p}}/d \PP_{B_\omega}$  is given by the formula: 
\begin{align}\label{RN2}
\frac{d \PP_{B_\omega}^{\mathfrak{p}}}{ d \PP_{B_\omega}} (\ZZ) =   \frac{e^{2 S_{\mathfrak{p}}(\ZZ)}}{\E_{\PP_{B_\omega}}    (e^{2 S_{\mathfrak{p}}})}, \text{\, for $\PP_{B_\omega}$-almost every configuration $\ZZ$.}
\end{align}
\end{itemize}
 \end{thm}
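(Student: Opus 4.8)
The plan is to identify both sides of \eqref{RN2} with one and the same determinantal point process. Write
\[
H_{\mathfrak{p}} := \{ f \in \mathscr{B}_\omega : f(p_1) = \dots = f(p_\ell) = 0 \}
\]
and let $B_\omega^{\mathfrak{p}}$ denote its reproducing kernel. The first ingredient is the Shirai--Takahashi description of reduced Palm measures of a determinantal point process whose correlation kernel is the reproducing kernel of a space of holomorphic functions: conditioning on a point $p$ and deleting it replaces the defining subspace $H$ by $\{f\in H: f(p)=0\}$, so iterating over the distinct points $p_1,\dots,p_\ell$ shows that $\PP_{B_\omega}^{\mathfrak{p}}$ is the determinantal point process with kernel $B_\omega^{\mathfrak{p}}$. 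The second ingredient is the identity $H_{\mathfrak{p}}=b_{\mathfrak{p}}\,\mathscr{B}_\omega$: since $|b_{\mathfrak{p}}|\le 1$ on $\D$, multiplication by $b_{\mathfrak{p}}$ maps $\mathscr{B}_\omega$ into $H_{\mathfrak{p}}$; conversely, for $g\in H_{\mathfrak{p}}$ the quotient $g/b_{\mathfrak{p}}$ is holomorphic on $\D$, bounded near each $p_j$ (the zeros cancel) and bounded by $C|g|$ off small balls around the $p_j$ because $|b_{\mathfrak{p}}|$ is bounded below there, whence $g/b_{\mathfrak{p}}\in\mathscr{B}_\omega$. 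In particular $\overline{b_{\mathfrak{p}}\,\mathscr{B}_\omega}=H_{\mathfrak{p}}$.

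Part 1) is the assertion that the regularized multiplicative functional $\widetilde\Psi_{b_{\mathfrak{p}}}=e^{2S_{\mathfrak{p}}}$ attached to $b_{\mathfrak{p}}$ is well defined and integrable. Put $\varphi:=\log|b_{\mathfrak{p}}|\le 0$ and $X_r(\ZZ):=\sum_{z\in\ZZ:\,|z|\le r}\varphi(z)-\E_{\PP_{B_\omega}}\sum_{z\in\ZZ:\,|z|\le r}\varphi(z)$. The variance formula for linear statistics of determinantal processes, together with the sign of $\varphi$, gives for $r<r'$
\[
\E_{\PP_{B_\omega}}\big[(X_{r'}-X_r)^2\big]=\Var_{\PP_{B_\omega}}\Big(\sum_{r<|z|\le r'}\varphi(z)\Big)\le\int_{r<|z|\le r'}\varphi(z)^2\,B_\omega(z,z)\,\omega(z)\,d\lambda(z),
\]
and $\int_\D\varphi(z)^2 B_\omega(z,z)\omega(z)\,d\lambda(z)<\infty$: near $\partial\D$ one has $|\log|b_{\mathfrak{p}}(z)||\asymp 1-|z|$, so the boundary part is dominated by $\int_\D(1-|z|)^2 B_\omega(z,z)\omega(z)\,d\lambda(z)<\infty$, which is exactly \eqref{w-condition}, while near each $p_j$ the logarithmic singularity of $\varphi^2$ is integrable against the locally finite first-intensity measure $B_\omega(z,z)\omega(z)\,d\lambda(z)$. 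Hence $(X_r)$ is Cauchy in $L^2$. To upgrade this to almost sure convergence of the full limit $r\to 1^{-}$, I would choose radii $r_n\uparrow 1$ with $\int_{r_n<|z|\le r_{n+1}}B_\omega(z,z)\omega(z)\,d\lambda(z)\le 1$; then $X_{r_n}$ converges a.s.\ by Borel--Cantelli, and since $\varphi\le 0$ the maps $r\mapsto\sum_{|z|\le r}\varphi(z)$ and $r\mapsto\int_{|z|\le r}\varphi\,B_\omega(z,z)\omega\,d\lambda$ are monotone, so the oscillation $\sup_{r_n\le s\le r_{n+1}}|X_s-X_{r_n}|$ is controlled by $|\sum_{r_n<|z|\le r_{n+1}}\varphi(z)|+|\int_{r_n<|z|\le r_{n+1}}\varphi\,B_\omega(z,z)\omega\,d\lambda|$, whose squared $L^2$-norms sum over $n$ to a multiple of $\int_\D\varphi^2 B_\omega(z,z)\omega\,d\lambda<\infty$; a second Borel--Cantelli then yields $S_{\mathfrak{p}}=\lim_{r\to 1^{-}}X_r$ almost surely.

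For integrability I would use the Fredholm-determinant identity
\[
\E_{\PP_{B_\omega}}\big[e^{2X_r}\big]=\det\big(I-M_{\mathbf{1}_{\{|z|\le r\}}(1-|b_{\mathfrak{p}}|^2)}B_\omega\big)\cdot\exp\Big(-2\int_{|z|\le r}\log|b_{\mathfrak{p}}(z)|\,B_\omega(z,z)\,\omega(z)\,d\lambda(z)\Big)
\]
and rewrite the right-hand side as the Carleman determinant $\det_2\big(I-M_{\mathbf{1}_{\{|z|\le r\}}(1-|b_{\mathfrak{p}}|^2)}B_\omega\big)$ times $\exp\big(\int_{|z|\le r}(-\log|b_{\mathfrak{p}}|^2-(1-|b_{\mathfrak{p}}|^2))\,B_\omega(z,z)\omega\,d\lambda\big)$. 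Since $-\log t^2-(1-t^2)\asymp(1-t^2)^2$ as $t\to 1^{-}$, the exponent converges as $r\to 1^{-}$ by \eqref{w-condition} (the part near the $p_j$ being again harmless), and $M_{(1-|b_{\mathfrak{p}}|^2)}B_\omega$ is Hilbert--Schmidt, with $\|M_{(1-|b_{\mathfrak{p}}|^2)}B_\omega\|_{HS}^2\le\int_\D(1-|b_{\mathfrak{p}}(z)|^2)^2 B_\omega(z,z)\omega(z)\,d\lambda(z)\asymp\int_\D(1-|z|)^2 B_\omega(z,z)\omega\,d\lambda<\infty$, so the Carleman determinants converge to a finite positive limit. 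Therefore $\sup_r\E_{\PP_{B_\omega}}[e^{2X_r}]<\infty$; running the same computation with $|b_{\mathfrak{p}}|^{2p}$ for some $p>1$ (controlled by the same quantities because $1-t^{p}\le p(1-t)$) shows $\{e^{2X_r}\}$ is uniformly integrable, hence $e^{2S_{\mathfrak{p}}}\in L^1(\PP_{B_\omega})$ and $\E_{\PP_{B_\omega}}[e^{2S_{\mathfrak{p}}}]=\lim_{r\to 1^{-}}\E_{\PP_{B_\omega}}[e^{2X_r}]=:c_{\mathfrak{p}}\in(0,\infty)$.

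Finally, part 2) follows from the transfer principle for regularized multiplicative functionals: for $g$ with $|g|\le 1$ and $g\,\mathscr{B}_\omega\subset\mathscr{B}_\omega$ for which the regularized functional $e^{2S_g}$ is well defined and $\PP_{B_\omega}$-integrable, the normalized measure $e^{2S_g}\PP_{B_\omega}/\E_{\PP_{B_\omega}}[e^{2S_g}]$ is the determinantal point process associated to the subspace $\overline{g\,\mathscr{B}_\omega}$. One proves this by approximating $g$ by functions $g_\varepsilon$ (for instance $b_{\mathfrak{p}}$ cut off near $\partial\D$) for which $1-|g_\varepsilon|^2$ is compactly supported in $\D$, so that the ordinary multiplicative functional $\Psi_{|g_\varepsilon|^2}$ converges and the classical identity $\Psi_{|g_\varepsilon|^2}\PP_{B_\omega}/\E_{\PP_{B_\omega}}[\Psi_{|g_\varepsilon|^2}]=\PP_{\overline{g_\varepsilon\mathscr{B}_\omega}}$ applies, and then letting $\varepsilon\to 0$. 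Taking $g=b_{\mathfrak{p}}$ and using $\overline{b_{\mathfrak{p}}\,\mathscr{B}_\omega}=H_{\mathfrak{p}}$, the right-hand side becomes the determinantal point process with kernel $B_\omega^{\mathfrak{p}}$, which by the first step is $\PP_{B_\omega}^{\mathfrak{p}}$; comparing the two descriptions gives \eqref{RN2}. I expect the main obstacle to be exactly this transfer principle in the regularized, non-summable regime — controlling the approximation $\varepsilon\to 0$ uniformly and identifying the limiting projection as that onto $H_{\mathfrak{p}}$ — together with the almost sure convergence of $S_{\mathfrak{p}}$; both hinge on the sharp matching of $(\log|b_{\mathfrak{p}}(z)|)^2\asymp(1-|z|)^2$ near $\partial\D$ with hypothesis \eqref{w-condition}. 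The Blaschke-product identification of $H_{\mathfrak{p}}$ and the Palm-measure formula are comparatively routine.
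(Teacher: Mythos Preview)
Your proposal is correct and follows essentially the same strategy as the paper: identify $\PP_{B_\omega}^{\mathfrak{p}}$ with the determinantal process of the subspace $b_{\mathfrak{p}}\mathscr{B}_\omega$ via Shirai--Takahashi and Proposition~\ref{prop-B-rel}, then show that the regularized multiplicative functional associated to $g=|b_{\mathfrak{p}}|^2$ transforms $\PP_{B_\omega}$ into that process. The core estimate you use --- $1-|b_{\mathfrak{p}}(z)|^2\asymp(1-|z|)$ near $\partial\D$, matched against hypothesis~\eqref{w-condition} --- is exactly the content of the Lemma in \S\ref{derivation-berg}.

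The difference is one of packaging. The paper's proof is a one-line verification: since $g=|b_{\mathfrak{p}}|^2$ satisfies $\int_{E_\varepsilon}|g-1|B_\omega(z,z)\omega\,d\lambda<\infty$ and $\int_{E_\varepsilon^c}|g-1|^2B_\omega(z,z)\omega\,d\lambda<\infty$, Proposition~4.6 of \cite{BQS} applies directly and yields both the well-definedness of $\widetilde\Psi_g$ and the identity $\PP_{\Pi^g}=\overline\Psi_g\cdot\PP_\Pi$. You instead unpack that black box: variance bounds and Borel--Cantelli for the almost-sure limit, explicit Carleman determinants $\det_2$ for integrability and uniform integrability, and an approximation argument for the transfer principle. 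This is more self-contained and makes the role of the Hilbert--Schmidt condition $\int(1-|b_{\mathfrak{p}}|^2)^2B_\omega\omega\,d\lambda<\infty$ transparent, at the cost of reproving machinery that the paper imports. Your honest flagging of the ``transfer principle in the regularized regime'' as the delicate step is accurate; that is precisely what Proposition~4.6 of \cite{BQS} (and, in the harder Fock case, Theorem~\ref{thm-C} of the present paper) establishes. One small point: the paper works with $\sqrt{g}\,\mathscr{B}_\omega=|b_{\mathfrak{p}}|\mathscr{B}_\omega$ rather than $b_{\mathfrak{p}}\mathscr{B}_\omega$, and passes between them via the unimodular factor $|b_{\mathfrak{p}}|/b_{\mathfrak{p}}$ (Remark~\ref{rem-unit}); your direct identification of $b_{\mathfrak{p}}\mathscr{B}_\omega$ with $H_{\mathfrak{p}}$ achieves the same end.
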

 
Theorem \ref{thm-B2} will be obtained from
 \begin{prop}\label{prop-B}
Let $\omega$ be a weight such that  \eqref{w-condition} holds. Let  $\mathfrak{p} \in \D^\ell$ and $\mathfrak{q} \in \D^k$ be two tuples of distinct points in $\D$.
 Then the Radon-Nikodym derivative $d \PP_{B_\omega}^{\mathfrak{p}}/d \PP_{B_\omega}^{\mathfrak{q}}$  is given by 
 \begin{align}\label{RN2}
\frac{d \PP_{B_\omega}^{\mathfrak{p}}}{ d \PP_{B_\omega}^{\mathfrak{q}}} (\ZZ)  =   \frac{e^{2 S_{\mathfrak{p}, \mathfrak{q}}(\ZZ)}}{\E_{\PP_{B_\omega}^{\mathfrak{q}}}    (e^{2 S_{\mathfrak{p}, \mathfrak{q}}})}, \text{\, for $\PP_{B_\omega}^{\mathfrak{q}}$-almost every configuration $\ZZ$,}
\end{align}
where $S_{\mathfrak{p}, \mathfrak{q}}(\ZZ)$ is defined for $\PP_{B_\omega}^{\mathfrak{q}}$-almost every configuration $\ZZ$, given by
\begin{align}\label{S2}
S_{\mathfrak{p}, \mathfrak{q}}(\ZZ): = \lim_{ r \to 1^{-}}\left( \sum_{z \in \ZZ: | z | \le r }\log |b_{\mathfrak{p}}(z) b_{\mathfrak{q}}(z)^{-1}| -    \E_{\PP_{B_\omega}^{\mathfrak{q}}} \sum_{z \in \ZZ: | z | \le r }\log |b_{\mathfrak{p}}(z) b_{\mathfrak{q}}(z)^{-1}|\right).
\end{align}
 \end{prop}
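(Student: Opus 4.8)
The plan is to reduce the ratio of Palm measures of different orders to a combination of multiplicative functionals and to identify the limit in \eqref{S2} as the logarithm of a \emph{regularized multiplicative functional} associated with the Blaschke product $b_{\mathfrak p} b_{\mathfrak q}^{-1}$. The starting point is the Shirai-Takahashi description of the reduced Palm measure of a determinantal process: $\PP_{B_\omega}^{\mathfrak p}$ is again determinantal, with kernel $B_\omega^{\mathfrak p}$ equal to the reproducing kernel of the subspace $\{f \in \mathscr{B}_\omega : f(p_1) = \dots = f(p_\ell) = 0\}$. Since any $f$ vanishing at $\mathfrak p$ can be written as $b_{\mathfrak p} \cdot g$ with $g$ holomorphic, the map $f \mapsto f/b_{\mathfrak p}$ is, up to the harmless factor coming from the moduli $|b_{\mathfrak p}|$ on the boundary being $1$, a unitary identification of this subspace with a weighted Bergman space whose weight is $|b_{\mathfrak p}|^2 \omega$. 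Consequently, on the level of point processes, $\PP_{B_\omega}^{\mathfrak p}$ and $\PP_{B_\omega}^{\mathfrak q}$ differ by the multiplicative change of weight $z \mapsto |b_{\mathfrak p}(z)/b_{\mathfrak q}(z)|^2$, which is exactly the setup in which regularized multiplicative functionals control Radon-Nikodym derivatives. This is the content I would cite or prove as a lemma earlier in the paper; here I only invoke it.

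First I would make precise the claim that the unnormalized multiplicative functional $\Psi_{\mathfrak p, \mathfrak q}(\ZZ) = \prod_{z \in \ZZ} |b_{\mathfrak p}(z) b_{\mathfrak q}(z)^{-1}|^2$ is \emph{not} directly integrable (the infinite product generically diverges because the Blaschke factors tend to $1$ only like $1 - |z|$ near the boundary, matching the summability hypothesis \eqref{w-condition}), so one must renormalize. The correct renormalization is additive in the logarithm: subtract the $\PP_{B_\omega}^{\mathfrak q}$-expectation of the partial sums over $\{|z| \le r\}$, which is a finite, explicit integral against the one-point function $B_\omega^{\mathfrak q}(z,z)\omega(z)$, and then let $r \to 1^-$. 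The existence of the limit \eqref{S2} for $\PP_{B_\omega}^{\mathfrak q}$-a.e.\ $\ZZ$ is the key analytic step; I would obtain it by showing the partial sums form an $L^2(\PP_{B_\omega}^{\mathfrak q})$-bounded (in fact Cauchy) martingale-like family, estimating the variance of $\sum_{|z|\le r}\log|b_{\mathfrak p} b_{\mathfrak q}^{-1}|(z)$ via the determinantal two-point function and the pointwise kernel bounds, with the tail contribution controlled precisely by $\int_\D (1-|z|)^2 B_\omega(z,z)\omega(z)\,d\lambda(z) < \infty$ (using that $\log|b_{\mathfrak p}(z)/b_{\mathfrak q}(z)| = O(1-|z|)$ uniformly as $|z| \to 1$, since $\mathfrak p$ and $\mathfrak q$ have the same cardinality of "boundary behavior" — each factor contributes $-2\log|z| + O(1-|z|)$ and the leading terms cancel up to $O(1-|z|)$). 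Integrability of $e^{2 S_{\mathfrak p, \mathfrak q}}$ then follows from a uniform-integrability argument: $e^{2\sum_{|z|\le r}\cdots - 2\E(\cdots)}$ has $\PP_{B_\omega}^{\mathfrak q}$-expectation computable via the Fredholm-determinant formula for multiplicative functionals of determinantal processes, and one checks this expectation stays bounded as $r \to 1^-$.

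Second, I would establish formula \eqref{RN2} by combining three ingredients: (i) for each fixed $r < 1$, the "truncated" change of weight $|b_{\mathfrak p}/b_{\mathfrak q}|^2 \mathbf{1}_{|z|\le r} + \mathbf{1}_{|z|>r}$ produces a determinantal process whose Radon-Nikodym derivative against $\PP_{B_\omega}^{\mathfrak q}$ is the normalized multiplicative functional — this is the standard finite-range formula; (ii) the weighted-Bergman identification above shows that the full change of weight $|b_{\mathfrak p}/b_{\mathfrak q}|^2$ transforms $\PP_{B_\omega}^{\mathfrak q}$ into $\PP_{B_\omega}^{\mathfrak p}$ exactly; (iii) passing $r \to 1^-$ and using the $L^1$-convergence of the normalized truncated functionals to $e^{2S_{\mathfrak p,\mathfrak q}}/\E_{\PP_{B_\omega}^{\mathfrak q}}(e^{2S_{\mathfrak p,\mathfrak q}})$ (which follows from the a.e.\ convergence plus uniform integrability from the previous paragraph) identifies the limit with $d\PP_{B_\omega}^{\mathfrak p}/d\PP_{B_\omega}^{\mathfrak q}$. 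The normalization constant is forced by the requirement that the derivative integrate to $1$.

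The main obstacle I anticipate is step (i)-(iii) of the limiting argument — specifically, proving uniform integrability of the family of normalized truncated multiplicative functionals as $r \to 1^-$, since a.e.\ convergence alone does not give convergence in $L^1$ and hence does not identify the Radon-Nikodym derivative. This is where hypothesis \eqref{w-condition} is used in an essential, quantitative way: it must be parlayed (via the pointwise reproducing-kernel bounds and the determinantal variance estimate) into a bound on $\E_{\PP_{B_\omega}^{\mathfrak q}}[e^{2S^{(r)}_{\mathfrak p, \mathfrak q}} \log^+(e^{2 S^{(r)}_{\mathfrak p,\mathfrak q}})]$ or an analogous de la Vallée-Poussin criterion, uniformly in $r$. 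A secondary technical point is justifying the weighted-Bergman unitary identification under only the integrability hypothesis \eqref{w-condition} rather than pointwise weight bounds; I would handle this by approximating $\mathscr B_\omega$ by its finite-dimensional subspaces or by a density argument using that polynomials (equivalently, functions holomorphic in a neighborhood of $\overline{\D}$) are dense.
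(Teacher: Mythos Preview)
Your approach is essentially the same as the paper's: identify the Palm subspaces via Proposition \ref{prop-B-rel}, set $g=|b_{\mathfrak p}b_{\mathfrak q}^{-1}|^2$, and obtain the Radon--Nikodym derivative as a regularized multiplicative functional. The paper, however, does not redevelop the truncation/uniform-integrability machinery you sketch; it simply verifies that the pair $(g,B_\omega^{\mathfrak q})$ satisfies the hypotheses of Proposition 4.6 of \cite{BQS} (the $\mathscr{A}_2$ criterion: $\int_{E_\varepsilon}|g-1|B_\omega^{\mathfrak q}(z,z)\omega\,d\lambda<\infty$ and $\int_{E_\varepsilon^c}|g-1|^2B_\omega^{\mathfrak q}(z,z)\omega\,d\lambda<\infty$), from which everything you outline (existence of $S_{\mathfrak p,\mathfrak q}$, integrability of $e^{2S_{\mathfrak p,\mathfrak q}}$, and the formula \eqref{RN2}) follows as a black box. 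Your plan effectively rederives that proposition.

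One correction worth flagging: your justification of $\log|b_{\mathfrak p}(z)/b_{\mathfrak q}(z)|=O(1-|z|)$ is muddled. Individual Blaschke factors do \emph{not} behave like $-2\log|z|+O(1-|z|)$; rather, each factor already satisfies $\bigl|\frac{z-p}{1-\bar p z}\bigr|^2=1-\frac{(1-|z|^2)(1-|p|^2)}{|1-\bar p z|^2}$, so $\log|b_{p}(z)|=O(1-|z|)$ directly, with no cancellation between $\mathfrak p$ and $\mathfrak q$ required. This matters because here $\ell\ne k$ is allowed (unlike the Fock case), so an argument relying on matching cardinalities would fail. Your ``secondary technical point'' about the unitary identification is also a non-issue: since $b_{\mathfrak p}b_{\mathfrak q}^{-1}$ is bounded above and below away from the finitely many zeros/poles, multiplication by it is automatically a bounded isomorphism between the relevant subspaces.
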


\begin{rem}

If $\psi$ (resp. $\omega$) is a radial function, then the monomials $(z^n)_{n\ge 0}$ are orthogonal in the corresponding Hilbert space, hence the determinantal point process $\PP_{B_\psi}$ (resp. $\PP_{B_\omega}$) can be naturally approximated by {\it orthogonal polynomial ensembles}. In particular, if $\psi(z) = \frac{1}{2}|z|^2$ for all $z\in \C$, then $\PP_{B_\psi}$ is the Ginibre point process, see chapter 15 of Mehta's book \cite{Mehta-book}; if  $\omega(z) \equiv 1$ for all $z \in \D$, then $\PP_{B_\omega}$ is the determinantal point process describing the zero set of a Gaussian analytic function on the hyperbolic disc $\D$, see \cite{PV-acta}. Our study, however, goes beyond the radial setting and our methods work for more general phase spaces as well. 
 \end{rem}
 
 \begin{rem}
The regularized multiplicative functionals are necessary in Theorem \ref{thm-A2}, Theorem \ref{thm-B2} and Proposition \ref{prop-B}: indeed, when $\omega \equiv 1$,  for $\PP_{B_\omega}$-almost every configuration $\ZZ$ on $\D$,   the points in the configuration $\ZZ$ violate the Blaschke condition: 
\begin{align}\label{no-bla}
\sum_{z\in \ZZ} (1 - | z |) = \infty,
\end{align}
whence for any $\mathfrak{p}\in \D^\ell$,  we have,
 \begin{align}\label{violate-blaschke}
\prod_{z\in \ZZ} | b_{\mathfrak{p}}(z)| = 0, \text{\, for $\PP_{B_\omega}$-almost every configuration $\ZZ$,}
\end{align}
so the simple multiplicative functional is identically $0$. 
To see \eqref{no-bla}, we use the Kolmogorov three-series theorem and  the fact (Peres and Vir\'ag \cite{PV-acta}) that, for $\PP_{B_\omega}$-distributed random configurations $\ZZ$, the set of moduli $\{|z|: z \in \ZZ\}$ has same law as the set of random variables   $\{U_k^{1/(2k)}\}$, where $U_1, U_2, \dots $ are independent identically distributed  random variables such that $U_1$ has a uniform distribution in $[0,1]$. A direct computation shows that 
$$
 \E_{\PP_{B_\omega}}\sum_{z\in \ZZ} (1 - | z |) = \sum_{k } (1 - \E(U_k^{1/(2k)}))  = \infty.
$$

The determinantal point process $\PP_{B_\omega}$ in the case $\omega\equiv1$ describes the zero set of a Gaussian analytic function on $\D$:
$$
F_\D(z)= \sum_{n=0}^\infty g_n z^n,
$$
where  $(g_n)_{n\ge0}$ is a sequence of  independent identically distributed standard complex Gaussian random variables. Direct computation shows that 
$$
\E \n F_\D\n_{H^2}^2 = \infty \et \E\n F_\D\n_{B_\omega}^2  = \infty,
$$
hence the random holomorphic function almost surely belongs neither to the Hardy space $H^2$ nor to the Bergman space, thus it is  not surprising that the zero set of $F_\D$ almost surely violates Blaschke condition.

\end{rem}

\subsection{Quasi-invariance}

 Let $U = \C$ or $\D$. Let  $F: U\rightarrow U$ be a diffeomorphism. Its support, denoted by $\supp(F)$, is defined as the {\it relative closure} in $U$ of the subset $\{ z \in U: F (z) \ne z\}$. The totality of diffeomorphisms with compact supports is a group denoted by  $\Diff_c(U)$, i.e., 
  $$
 \Diff_c(U):  = \left\{F: U \rightarrow U\Big| \text{$F$ is a diffeomorphism and $\supp(F)$ is compact}\right\}.
 $$
The group $\Diff_c(U)$ naturally acts on the set of configurations on $U$:  given any diffeomorphism $F\in \Diff_c(U)$ and any configuration $\ZZ$ on $U$, 
$$
(F, \mathcal{Z}) \mapsto F(\ZZ) : = \{ F(z): z \in \mathcal{Z}\}.
$$
 
Recall that the Jacobian $J_F$ of the function $F: U\rightarrow U$ is defined by 
$$
J_F(z) = | \det D F(z)|.
$$

\begin{cor}\label{quasi-inv}
Let $\PP_K$ be a determinantal point process on $U$, which is either the determinantal point process $\PP_{B_\psi}$ on $\C$ or the determinantal point process $\PP_{B_\omega}$ on $\D$. Then under Assumption \eqref{sub-h} in the case of $\C$ or, in the case of $\D$ Assumption \eqref{w-condition}, $\PP_K$ is quasi-invariant under the induced action of the group $\Diff_c(U)$. 

More precisely, let $F \in \Diff_c(U)$ and  let $V \subset U$ be any precompact subset containing $\supp (F)$.  For $\mathbb{P}_{K}$-almost every configuration $\ZZ $ the following holds: if $\ZZ \bigcap V  = \{q_1, \dots, q_\ell\}$, then
\begin{align*}
\frac{d \mathbb{P}_K \circ F}{ d \mathbb{P}_{K}}  (\ZZ) = &  \frac{\det [ K ( F(q_i), F(q_j))]_{i, j = 1}^\ell}{\det [ K (q_i, q_j)]_{i, j = 1}^\ell} \cdot  \frac{d\PP_{K}^{\mathfrak{p}}}{ d \PP_K^{\mathfrak{q}}}(\ZZ)  \cdot  \prod_{i = 1}^\ell  J_F (  q_i ) ,
\end{align*} 
where   $\mathfrak{q}  = (q_1, \dots, q_\ell)$ and $\mathfrak{p} = (F(q_1), \dots, F(q_\ell))$.
\end{cor}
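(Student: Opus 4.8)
\textbf{Proof proposal for Corollary \ref{quasi-inv}.}

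The plan is to reduce the quasi-invariance statement to a combination of three ingredients: the local structure of the Palm measure hierarchy, the fact that a diffeomorphism supported in a precompact set only ``moves'' finitely many points of a $\PP_K$-almost every configuration, and the explicit Radon--Nikodym formulas for Palm measures of the same order already established in Theorem \ref{thm-A2} (in the case of $\C$) and in Proposition \ref{prop-B} (in the case of $\D$). First I would fix $F \in \Diff_c(U)$ and a precompact Borel set $V \supset \supp(F)$ with $\lambda(\partial V) = 0$. Since $\PP_K$ is a determinantal (hence locally finite) point process, for $\PP_K$-almost every configuration $\ZZ$ the intersection $\ZZ \cap V$ is a finite set $\{q_1,\dots,q_\ell\}$ of distinct points, and $F(\ZZ)$ differs from $\ZZ$ only in that these $\ell$ points are replaced by $p_i := F(q_i)$, which again lie in $V$. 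Conditioning on the ``exterior'' configuration $\ZZ \setminus V$ and on the number $\ell$ of interior points, the key observation is that the pushforward $\PP_K \circ F$ restricted to the event $\{\#_V = \ell\}$ can be disintegrated exactly as $\PP_K$ is, with the interior $\ell$-point configuration transported by $F$.

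The core computation is then a change of variables on the $\ell$-point correlation level. I would invoke the standard fact (Shirai--Takahashi, or the discussion in \S2 of the paper) that for a determinantal process the conditional law of the $\ell$ points inside $V$, given the exterior configuration and given that there are exactly $\ell$ of them, is itself determinantal-type with a density proportional to $\det[\widetilde K(x_i,x_j)]_{i,j=1}^\ell$ with respect to the reference measure on $V^\ell$, where $\widetilde K$ is the kernel of the reduced process. Equivalently, and more robustly, one works directly with Palm measures: the measure $\PP_K$ can be reconstructed from its reduced Palm measures $\PP_K^{\mathfrak q}$ via the Campbell--Mecke / Palm disintegration formula, and the same is true for $\PP_K \circ F$. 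Performing the substitution $q_i \mapsto F(q_i)$ in the Palm disintegration produces three factors: the ratio of determinants $\det[K(F(q_i),F(q_j))]/\det[K(q_i,q_j)]$ coming from the density of the first $\ell$ correlation functions; the product of Jacobians $\prod_{i=1}^\ell J_F(q_i)$ coming from the change of variables in the reference measure $e^{-2\psi}d\lambda$ (resp. $\omega\, d\lambda$) — here one must be a little careful that the weight itself transforms, but since $\supp(F)\subset V$ the weight factors at $F(q_i)$ versus $q_i$ are absorbed into the definition of the kernel and cancel against the determinant normalization, leaving exactly the flat Jacobian; and finally the factor $d\PP_K^{\mathfrak p}/d\PP_K^{\mathfrak q}(\ZZ)$ comparing the two reduced Palm measures of order $\ell$ conditioned at $\mathfrak p$ and $\mathfrak q$, which accounts for how the \emph{exterior} configuration is redistributed. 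Assembling these three factors yields precisely the claimed formula.

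I expect the main obstacle to be the rigorous bookkeeping of the disintegration and the matching of the ``interior'' and ``exterior'' contributions, rather than any deep new estimate: one must justify that $\PP_K \circ F$ is genuinely absolutely continuous with respect to $\PP_K$ (not merely that a candidate density integrates correctly), and this requires knowing that the reduced Palm measures of the same order are equivalent — which is exactly the content of Theorem \ref{thm-A2}(2) for $\C$ and of Proposition \ref{prop-B} for $\D$ (the latter even across orders). Thus the strategy is: (i) show $\PP_K \circ F \ll \PP_K^{\mathfrak q}$-type statements locally by conditioning on a large disc containing $V$ and using the determinantal structure on the compact set, where everything is an honest finite-dimensional change of variables; (ii) identify the Radon--Nikodym derivative on that compact piece as the product of the determinant ratio and the Jacobian product; (iii) pass to the comparison with $\PP_K$ itself by inserting the already-known Palm Radon--Nikodym derivative $d\PP_K^{\mathfrak p}/d\PP_K^{\mathfrak q}$ to handle the part of the configuration outside $V$, using that $F$ acts as the identity there; and (iv) check that the resulting density does not depend on the auxiliary choice of $V$ (it cannot, since enlarging $V$ only adds points $q_i$ with $F(q_i)=q_i$, for which the determinant ratio is $1$, the Jacobian is $1$, and $\mathfrak p$ and $\mathfrak q$ acquire identical extra coordinates so the Palm ratio is unchanged). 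A final remark is that the integrability and almost-everywhere existence built into Theorem \ref{thm-A2} and Proposition \ref{prop-B} guarantee that the stated density is a bona fide element of $L^1(\PP_K)$, completing the proof of quasi-invariance.
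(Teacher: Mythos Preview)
Your proposal is correct and follows the same line as the paper: the paper's entire proof is the single sentence ``This is an immediate consequence of Theorem \ref{thm-A2}, Proposition \ref{prop-B} and Proposition 2.9 of \cite{BQS}'', so the general Palm-disintegration / change-of-variables mechanism you spell out in steps (i)--(iv) is precisely what is packaged into Proposition 2.9 of \cite{BQS}, and the only \emph{new} inputs are the equivalence results for Palm measures of the same order, which you also identify. In other words, you have essentially re-derived the statement of Proposition 2.9 of \cite{BQS} rather than invoking it as a black box, but the logical structure is identical.
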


\begin{proof}
This  is an immediate consequence of Theorem \ref{thm-A2}, Proposition \ref{prop-B} and \cite[Prop. 2.19]{BQS}.
\end{proof}

\begin{rem}
Grigori Olshanski in \cite{OQS}   has shown that the determinantal point process on $\Z$
governed by  the Gamma kernel is quasi-invariant under the group of finite permutations of $\Z$ and has expressed the Radon-Nikodym derivative as a generalized multiplicative functional.
In \cite{BQS}  quasi-invariance under the infinite symmetric group is established for a large class of determinantal measures on $\Z$ and it is also shown that a large class of
determinantal measures on $\R$ is quasi-invariant under the group of diffeomorphisms with compact support. Quasi-invariance under local deformations of the phase space can be seen as a weak form of exchangeability and, thus, a measure of chaos of our processes. For example, Gibbs measures are quasi-invariant under local perturbations, and the Radon-Nikodym derivative is a multiplicative functional. As Ghosh-Peres rigidity shows, particles of a determinantal process interact much more strongly than those in a Gibbs field. The quasi-invariance can nonetheless be seen as the analogue, in our situation, of the Gibbs property.  In the sequel \cite{BQ-cond} to this paper, quasi-invariance is used
in order to compute, for determinantal point processes corresponding to generalized Fock spaces, the
conditional measure in a bounded domain with respect to the configuration in the complement. This conditional measure is proved to be an orthogonal polynomial ensemble whose weight is found explicitly.
\end{rem}

\subsection{Unified approach for obtaining Radon-Nikodym derivatives}
In this section, let us describe briefly the main idea of our unified approach for obtaining the Radon-Nikodym derivatives in Theorem \ref{thm-A2}, Theorem \ref{thm-B2} and Proposition \ref{prop-B}.  

\subsubsection{Relations between Palm subspaces}
If $\mathfrak{p} \in \C^\ell$ is an $\ell$-tuple of distinct points of $\C$, we define the {\it Palm subspace}:
\begin{align}\label{Fock-palm-sub}
\mathscr{F}_\psi(\mathfrak{p})  :=  \left\{ \varphi \in \mathscr{F}_\psi:  \varphi(p_1) = \cdots = \varphi(p_\ell) = 0 \right\}.
\end{align}
Let $B_\psi^{\mathfrak{p}}$ denote the reproducing kernel of $\mathscr{F}_\psi(\mathfrak{p})$. 

Similarly, if $\mathfrak{p} \in \D^\ell$ is an $\ell$-tuple of distinct points of $\D$, we define the Palm subspace
\begin{align}\label{ber-palm-sub}
\mathscr{B}_\omega(\mathfrak{p})  =  \left\{ \varphi \in \mathscr{B}_\omega:  \varphi(p_1) = \cdots = \varphi(p_\ell) = 0 \right\},
\end{align}
and denote its reproducing kernel by $B_\omega^{\mathfrak{p}}$.

By Shirai-Takahashi's theorem, which motivates our terminology, see Theorem \ref{ST} below,  these Palm subspaces are related to the reduced Palm measures: $B_\psi^{\mathfrak{p}}$ (resp. $B_\omega^{\mathfrak{p}}$) is the correlation kernel of $\PP_{B_\psi}^{\mathfrak{p}}$ (resp. $\PP_{B_\omega}^{\mathfrak{p}}$), i.e., we have
$$
\PP_{B_\psi}^{\mathfrak{p}} = \PP_{B_\psi^{\mathfrak{p}}} \,\, (\text{resp. } \PP_{B_\omega}^{\mathfrak{p}} = \PP_{B_\omega^{\mathfrak{p}}}).
$$

In what follows, for a measured space $(E, \mu)$, a Borel function $g: E \rightarrow \C$ and a certain subspace $L\subset L^2(E, \mu)$, we denote by $gL$ the space defined by 
\begin{align}\label{mul-f-space}
gL : =\{ gf| f \in L\}. 
\end{align}
Note that in the above definition, even if $L$ is closed and $gL\subset L^2(E, \mu)$, in general, we do not require $gL$ to be closed in  $L^2(E, \mu)$.

\begin{prop}\label{prop-F-rel}
For any pair of $\ell$-tuples $\mathfrak{p}, \mathfrak{q}\in \C^\ell$ of distinct points in $\C$, we have
\begin{align}\label{F-rel}
\mathscr{F}_\psi(\mathfrak{p}) =  \frac{(z-p_1) \cdots (z-p_\ell)}{ (z-q_1)\cdots (z-q_\ell)} \cdot  \mathscr{F}_\psi(\mathfrak{q}),
\end{align}
the equality is understood as in the definition \eqref{mul-f-space}. 
\end{prop}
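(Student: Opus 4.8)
The plan is to show that multiplication by the rational function
$$R(z) = \frac{(z-p_1)\cdots(z-p_\ell)}{(z-q_1)\cdots(z-q_\ell)}$$
carries $\mathscr{F}_\psi(\mathfrak{q})$ into $\mathscr{F}_\psi(\mathfrak{p})$, and then to obtain the reverse inclusion for free by symmetry: interchanging the roles of $\mathfrak{p}$ and $\mathfrak{q}$ replaces $R$ by $R^{-1}$, so the inclusion $R^{-1}\cdot\mathscr{F}_\psi(\mathfrak{p})\subseteq\mathscr{F}_\psi(\mathfrak{q})$ is just the same statement applied to the swapped tuples, and this is equivalent to $\mathscr{F}_\psi(\mathfrak{p})\subseteq R\cdot\mathscr{F}_\psi(\mathfrak{q})$. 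Thus the whole proposition reduces to the single inclusion $R\cdot\mathscr{F}_\psi(\mathfrak{q})\subseteq\mathscr{F}_\psi(\mathfrak{p})$.

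To prove this inclusion, fix $\varphi\in\mathscr{F}_\psi(\mathfrak{q})$, so $\varphi$ is entire and vanishes at each $q_j$. First I would check that $R\varphi$ extends to an entire function: the only possible singularities of $R\varphi$ lie at the points $q_j$, and at each such point either $q_j\notin\{p_1,\dots,p_\ell\}$, in which case $R$ has a simple pole there cancelled by the zero of $\varphi$, or $q_j=p_i$ for some $i$, in which case the factors $(z-p_i)$ and $(z-q_j)$ cancel in $R$ and $R$ is already regular at $q_j$; in both cases the singularity is removable. Next, $R\varphi$ vanishes at each $p_i$: if $p_i\notin\{q_1,\dots,q_\ell\}$ then $R$ itself vanishes at $p_i$, while if $p_i=q_j$ for some $j$ then $\varphi(p_i)=\varphi(q_j)=0$.

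It then remains to verify $\int_\C|R\varphi|^2 e^{-2\psi}\,d\lambda<\infty$. Here I would split $\C$ into a large disc $\{|z|\le R_0\}$ containing all the points $p_i, q_j$ and its complement. On the disc, $R\varphi$ is continuous (being entire) and $e^{-2\psi}$ is continuous (as $\psi\in C^2$), hence both are bounded on this compact set and the integral over it is finite. On $\{|z|>R_0\}$ the rational function satisfies $|R(z)|\le C$ for some constant $C$, because $R(z)\to1$ as $|z|\to\infty$; therefore
$$\int_{|z|>R_0}|R\varphi|^2 e^{-2\psi}\,d\lambda \;\le\; C^2\int_\C|\varphi|^2 e^{-2\psi}\,d\lambda \;=\; C^2\,\|\varphi\|_{\mathscr{F}_\psi}^2\;<\;\infty.$$
Combining the two pieces gives $R\varphi\in\mathscr{F}_\psi$, and together with the vanishing conditions above, $R\varphi\in\mathscr{F}_\psi(\mathfrak{p})$, completing the inclusion and hence the proposition.

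I do not anticipate a genuine obstacle here: the argument uses only that $\mathscr{F}_\psi$ is a space of entire functions and that $e^{-2\psi}$ is continuous (the quantitative hypothesis \eqref{sub-h} plays no role in this particular statement). The only points requiring a little care are the bookkeeping of removable singularities and zeros when the tuples $\mathfrak{p}$ and $\mathfrak{q}$ share points, and the elementary tail estimate near infinity that reduces $\int_{|z|>R_0}|R\varphi|^2 e^{-2\psi}\,d\lambda$ back to $\|\varphi\|_{\mathscr{F}_\psi}^2$.
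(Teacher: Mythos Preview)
Your proof is correct and is precisely the ``basic properties of holomorphic functions'' argument the paper has in mind: the paper does not spell out a proof at all, merely remarking that Propositions~\ref{prop-F-rel} and~\ref{prop-B-rel} are immediate from the definitions. Your write-up fills in exactly those details (removable singularities at the $q_j$, vanishing at the $p_i$, and the tail bound coming from $R(z)\to 1$), so there is nothing to compare---you have supplied the argument the authors left to the reader.
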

 
\begin{prop}\label{prop-B-rel}
Let $k, \ell \in \N\cup \{0\}$ and let  $\mathfrak{p} \in \D^\ell, \mathfrak{q}\in \D^k $ be two tuples of distinct points in $\D$.  Then 
 \begin{align}\label{B-rel}
 \mathscr{B}_\omega(\mathfrak{p})=   \prod_{j=1}^\ell \frac{z-p_j}{1-\bar{p}_j z} \left( \prod_{j=1}^k \frac{z-q_j}{1-\bar{q}_j z}\right)^{-1}  \cdot \mathscr{B}_\omega(\mathfrak{q}).
 \end{align}
 In particular, we have
 $$
 \mathscr{B}_\omega(\mathfrak{p})= \prod_{j=1}^\ell \frac{z-p_j}{1-\bar{p}_j z} \cdot \mathscr{B}_\omega.
 $$ 
\end{prop}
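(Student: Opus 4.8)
The plan is to reduce everything to the claim that multiplication by the single Blaschke factor $z \mapsto (z-p)/(1-\bar p z)$ is an isometry of $\mathscr{B}_\omega$ onto the one-point Palm subspace $\mathscr{B}_\omega(p)$, and then iterate. First I would record the two trivial structural facts. (i) For a distinct tuple $\mathfrak{p}=(p_1,\dots,p_\ell)$, a function $\varphi\in\mathscr{B}_\omega$ vanishes at each $p_j$ if and only if $\varphi$ is divisible (as a holomorphic function on $\D$) by the finite Blaschke product $b_{\mathfrak{p}}(z)=\prod_{j}(z-p_j)/(1-\bar p_j z)$, since each factor $(z-p_j)/(1-\bar p_j z)$ has a simple zero exactly at $p_j$ and no zero elsewhere in $\D$; distinctness guarantees the local vanishing orders add up correctly, so $\varphi/b_{\mathfrak{p}}$ is again holomorphic on $\D$. (ii) Each single factor has modulus $1$ on $\partial\D$, so $|b_{\mathfrak{p}}(z)|\le 1$ on $\D$ and in particular multiplication by $b_{\mathfrak p}$ or division of an element of $\mathscr{B}_\omega(\mathfrak p)$ by $b_{\mathfrak p}$ does not destroy square-integrability against $\omega\,d\lambda$: $\int_\D |\varphi|^2\omega\,d\lambda = \int_\D |b_{\mathfrak p}|^2\,|\varphi/b_{\mathfrak p}|^2 \omega\,d\lambda \le \int_\D |\varphi/b_{\mathfrak p}|^2\omega\,d\lambda$, and conversely one has to check the other inequality.

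The substantive point is that the \emph{other} inequality also holds, i.e. multiplication by $b_{\mathfrak p}$ is not merely bounded but isometric on $\mathscr{B}_\omega$. The clean way I would argue this is: the space $\mathscr{B}_\omega$ is, by Definition \ref{defn-berg}, the space of holomorphic functions on $\D$ that are square-integrable against $\omega\,d\lambda$, with the $L^2(\omega\,d\lambda)$ norm; the measure $\omega\,d\lambda$ is rotation-non-invariant in general, so I cannot simply quote the classical Bergman computation. Instead I would use that $b_{\mathfrak p}$ is a finite Blaschke product, hence extends holomorphically across $\partial\D$ with $|b_{\mathfrak p}|\equiv 1$ on the circle; for $\varphi$ holomorphic in a neighbourhood of $\overline{\D}$ one gets, integrating $|b_{\mathfrak p}\varphi|^2$ on circles of radius $r$ and letting $r\to 1^-$ together with a change-of-variables argument, that $\|b_{\mathfrak p}\varphi\|_{L^2(\omega\,d\lambda)}^2 - \|\varphi\|_{L^2(\omega\,d\lambda)}^2$ is controlled by $\int_\D (1-|b_{\mathfrak p}(z)|^2)|\varphi(z)|^2\omega(z)\,d\lambda(z)$, which is finite and in fact the correction vanishes in the reproducing-kernel identity. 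A more economical route, and the one I would actually present, is to avoid proving isometry at all: it suffices to show the two \emph{sets} coincide, and that follows from fact (i) plus the observation that $b_{\mathfrak p}\cdot\mathscr{B}_\omega \subseteq \mathscr{B}_\omega(\mathfrak p)$ and $b_{\mathfrak p}^{-1}\cdot \mathscr{B}_\omega(\mathfrak p)\subseteq \mathscr{B}_\omega$, both of which are immediate from $|b_{\mathfrak p}|\le 1$ on $\D$ and from (i). Thus $\mathscr{B}_\omega(\mathfrak p)=b_{\mathfrak p}\cdot\mathscr{B}_\omega$ as sets of holomorphic functions.

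Granting the particular case $\mathscr{B}_\omega(\mathfrak p)=b_{\mathfrak p}\cdot\mathscr{B}_\omega$, the general relation \eqref{B-rel} is pure algebra: applying the particular case to $\mathfrak p$ and to $\mathfrak q$ separately gives $\mathscr{B}_\omega(\mathfrak p)=b_{\mathfrak p}\cdot\mathscr{B}_\omega$ and $\mathscr{B}_\omega(\mathfrak q)=b_{\mathfrak q}\cdot\mathscr{B}_\omega$, hence $\mathscr{B}_\omega = b_{\mathfrak q}^{-1}\cdot\mathscr{B}_\omega(\mathfrak q)$ and substituting yields $\mathscr{B}_\omega(\mathfrak p)=b_{\mathfrak p}\,b_{\mathfrak q}^{-1}\cdot\mathscr{B}_\omega(\mathfrak q)$, which is exactly \eqref{B-rel} after unwinding the definition \eqref{def-bp} of $b_{\mathfrak p}$ and $b_{\mathfrak q}$. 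One should note that the equality is an equality of spaces of functions, not a priori an isometry of Hilbert spaces; but since the Blaschke factors are unimodular on $\partial\D$, the norms in fact agree (this is where the finiteness hypothesis \eqref{w-condition} is not even needed — it is needed elsewhere, e.g. to guarantee $\mathscr{B}_\omega$ is infinite-dimensional and the point process is well behaved), and I would include one line remarking that multiplication by $b_{\mathfrak p}\,b_{\mathfrak q}^{-1}$ is a unitary bijection $\mathscr{B}_\omega(\mathfrak q)\to\mathscr{B}_\omega(\mathfrak p)$.

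The main obstacle I anticipate is the isometry/unimodularity claim on a general, non-radial weight $\omega$: the naive circle-integration argument uses rotational structure that is absent, so the honest proof goes through approximating $\varphi\in\mathscr{B}_\omega$ by functions holomorphic past the boundary (or through the subharmonicity of $\log|b_{\mathfrak p}\varphi/\varphi|$ and a limiting argument on $\int_{|z|=r}$), and one must be careful that $b_{\mathfrak p}^{-1}\varphi$ for $\varphi\in\mathscr{B}_\omega(\mathfrak p)$ really lands back in $L^2(\omega\,d\lambda)$ near the zeros $p_j$ — here distinctness of the $p_j$ and the fact that $\varphi$ vanishes to order exactly (at least) one at each $p_j$ is what saves integrability, since $|b_{\mathfrak p}(z)|^{-1}$ has only a simple pole-type blow-up at each $p_j$ matched by a simple zero of $\varphi$. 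Everything else is bookkeeping.
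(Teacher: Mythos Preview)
Your set-equality argument is correct and is precisely what the paper intends: the authors simply declare the proof ``immediate from the definitions \eqref{Fock-palm-sub} and \eqref{ber-palm-sub} and basic properties of holomorphic functions,'' and your two inclusions $b_{\mathfrak p}\cdot\mathscr{B}_\omega\subseteq\mathscr{B}_\omega(\mathfrak p)$ and $b_{\mathfrak p}^{-1}\cdot\mathscr{B}_\omega(\mathfrak p)\subseteq\mathscr{B}_\omega$ are exactly that argument written out. Two corrections, however.

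First, the second inclusion is \emph{not} immediate from $|b_{\mathfrak p}|\le 1$; that inequality points the wrong way, as you yourself note two lines earlier. The correct reason is the one you bury in your last paragraph: $\varphi/b_{\mathfrak p}$ is holomorphic on $\D$ by (i), and for integrability one splits $\D$ into a compact neighbourhood $K$ of $\{p_1,\dots,p_\ell\}$ (where $\varphi/b_{\mathfrak p}$ is bounded, and $\omega\,d\lambda$ has finite mass) and $\D\setminus K$ (where $1/|b_{\mathfrak p}|$ is bounded because $|b_{\mathfrak p}|\to 1$ at $\partial\D$). State it this way and drop the misleading sentence.

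Second, delete every claim about isometry or unitarity. Multiplication by a Blaschke factor is \emph{not} an isometry of $\mathscr{B}_\omega$ in general, even for $\omega\equiv 1$: with $p=0$ one has $b_p(z)=z$, and $\|z\cdot 1\|^2=\pi/2\neq\pi=\|1\|^2$ in the standard Bergman norm. Your sentence ``the norms in fact agree'' and the proposed remark that $b_{\mathfrak p}b_{\mathfrak q}^{-1}$ acts unitarily are simply false. Fortunately neither the paper nor the downstream argument needs this: Proposition~\ref{prop-B-rel} is an equality of closed subspaces, and the determinantal point process depends only on the subspace (cf.\ Remark~\ref{rem-unit}), so the bounded bijection you have established is already enough.
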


\begin{coms}
\begin{itemize}
\item The proofs of Propositions \ref{prop-F-rel} and \ref{prop-B-rel} are immediate from the definitions \eqref{Fock-palm-sub} and \eqref{ber-palm-sub} and basic properties of holomorphic functions.  For instance, by symmetry, for proving \eqref{F-rel}, it suffices to prove that 
\begin{align}\label{F-rel-pf}
  \frac{(z-p_1) \cdots (z-p_\ell)}{ (z-q_1)\cdots (z-q_\ell)} \cdot  \mathscr{F}_\psi(\mathfrak{q})\subset \mathscr{F}_\psi(\mathfrak{p}). 
\end{align}
But if $f \in \mathscr{F}_\psi(\mathfrak{q})$, then, by definition, $f$ is holomorphic on $\C$ and vanishes at $q_1, \cdots q_\ell$, hence the function $ \frac{(z-p_1) \cdots (z-p_\ell)}{ (z-q_1)\cdots (z-q_\ell)} \cdot f$ is holomorphic on $\C$ and vanishes at $p_1, \cdots, p_\ell$. For finishing the proof of \eqref{F-rel-pf}, it remains to prove that 
$$
\int_\C \left|\frac{(z-p_1) \cdots (z-p_\ell)}{ (z-q_1)\cdots (z-q_\ell)} \cdot f (z) \right| ^2 e^{-2 \psi(z)} d\lambda(z) < \infty.
$$
But this follows immediately from the following inequality 
\begin{align*}
& \int_\C \left|\frac{(z-p_1) \cdots (z-p_\ell)}{ (z-q_1)\cdots (z-q_\ell)} \cdot f (z) \right| ^2 e^{-2 \psi(z)} d\lambda(z)
\\
 \le & \int_{\{|z| \le R \}} \left|\frac{(z-p_1) \cdots (z-p_\ell)}{ (z-q_1)\cdots (z-q_\ell)} \cdot f (z) \right| ^2 e^{-2 \psi(z)} d\lambda(z) 
 \\
  & + K_R \int_{\{|z| > R \}} \left|f (z) \right| ^2 e^{-2 \psi(z)} d\lambda(z),
\end{align*}
where  $R = 1+ \max_{1\le i \le \ell}| q_i| $ and $K_R =\sup_{|z|> R}\left|\frac{(z-p_1) \cdots (z-p_\ell)}{ (z-q_1)\cdots (z-q_\ell)}\right|^2< \infty $.  The equality \eqref{B-rel} can be proved similarly. 
\item A common feature,  naturally, needed later,  of Propositions \ref{prop-F-rel} and \ref{prop-B-rel},  is shown by the following relations
\begin{align}\label{convergence-rate}
\lim_{| z | \to \infty} \left|   \frac{(z-p_1) \cdots (z-p_\ell)}{ (z-q_1)\cdots (z-q_\ell)}\right| = 1  \et \lim_{|z| \to 1^{-}} \left|\prod_{j=1}^\ell \frac{z-p_j}{1-\bar{p}_j z} \right| = 1.
\end{align}
The rate of convergence in \eqref{convergence-rate} also plays an important r\^ole for defining the regularized multiplicative functionals, see \S \ref{derivation-fock} and \S \ref{derivation-berg}.
\end{itemize}

\end{coms}

\subsubsection{Radon-Nikodym derivatives as regularized multiplicative functionals}

For obtaining the Radon-Nikodym derivatives in question, we  develop  in Theorem \ref{thm-C} a general result on  regularized multiplicative functionals. This most technical result of the paper, an extension of  \cite[Prop. 4.2]{BQS} (cf. Proposition \ref{prop-BQS} below),  is, we hope, interesting in its own right; the stronger statement is also necessary  for our argument in the case of $\C$, in which the main result in  \cite{BQS}  is not applicable.  The difference is that instead of Hilbert-Schmidt operators used in \cite{BQS}, here we must work with the von Neumann-Schatten class of order three; see section 4 below for details.

By Theorem \ref{thm-C}, under the assumption \eqref{sub-h} on $\psi$,  we can show that  the regularized multiplicative functional, i.e., the formula \eqref{RN2},  is well-defined. 
This regularized multiplicative functional is then shown to be exactly the Radon-Nikodym derivative between the desired reduced Palm measures  of the same order for the determinantal point process $\PP_{B_\psi}$.   

The regularized multiplicative functionals in the case of $\D$ are technically simpler and the full force of Theorem \ref{thm-C} is not needed.

\subsection{Organization of the paper}

The paper is organized as follows. The basic material in the theory of determinantal point processes is recalled in \S \ref{sec-DPP}. The definitions concerning generalized Fock spaces and generalized Bergman spaces are given in \S \ref{sec-bergman}. In \S \ref{S-reg-m} we define  {\it regularized multiplicative functionals} which play the main r\^ole in the proof and  state the technical Theorem \ref{thm-C}. Theorem \ref{thm-C}  is then applied to    determinantal point processes associated with generalized Fock spaces in \S \ref{sec-case-fock} and to  those associated with  generalized Bergman spaces in \S \ref{sec-case-bergman}. The subsequent \S \ref{proof-thm-C} is devoted to the proof of Theorem \ref{thm-C}. A general proposition
showing  that if a point process is rigid in the sense of Ghosh and Peres then its Palm measures of different orders are singular is proved in the Appendix (\S \ref{sec-app}).

 \begin{rem}
 Part of our main results in this paper were announced in \cite{BQ-cras}. 
 \end{rem}

\section{Spaces of configurations and determinantal point processes}\label{sec-DPP}

Let $E$ be a locally compact complete separable metric space equipped with a sigma-finite Borel measure $\mu$. The space $E$ will be later referred to as {\it phase space}. The measure $\mu$ is referred to as  {\it reference measure} or {\it background measure}.  By a configuration $ \mathcal{X}$ on the phase space $E$, we mean a locally finite subset of $ \mathcal{X} \subset E$.  Identify a configuration $\mathcal{X} \in \Conf(E)$ with  the Radon measure
$$
m_\mathcal{X} : = \sum_{x \in \mathcal{X}} \delta_x,
$$
where $\delta_x$ is the Dirac mass on the point $x$.  The space of configurations $\Conf(E)$ is then identified with a subset of the space $\mathfrak{M}(E)$ of Radon measures on $E$ and becomes itself a complete separable metric space. The space  $\Conf(E)$ is naturally equipped  with its Borel sigma algebra.

Points in a configuration will also be called particles.  In this paper, the italicized letters as $\XX, \mathcal{Y}, \ZZ$  always denote configurations.

\subsection{Additive functionals and multiplicative functionals} We recall the definitions of additive and multiplicative functionals on the space of configurations.

If $\varphi: E \rightarrow \C$ is a measurable function on $E$, then the additive functional (which is also called linear statistic)  $S_\varphi: \Conf(E) \rightarrow \C$ 
 corresponding to $\varphi$ is defined by 
 $$
 S_\varphi(\mathcal{X}) = \sum_{x \in \mathcal{X}} \varphi(x)
 $$
 provided the sum  $\sum_{x \in \mathcal{X}} \varphi(x)$ converges absolutely. If the sum  $\sum_{x \in \mathcal{X}} \varphi(x)$ fails to converge absolutely, then the additive functional is not defined at $\mathcal{X}$.

Similarly, the multiplicative functional $\Psi_g: \Conf(E) \rightarrow [0, \infty]$ associated with  a non-negative measurable function $g: E\rightarrow \R^{+}$,  is  defined as the function
\begin{align*}
\Psi_g(\mathcal{X}): = \prod_{x \in \mathcal{X}} g(x),
\end{align*}
provided the product $\prod\limits_{x \in \mathcal{X}} g(x)$ absolutely converges to a value in $[0, \infty]$. If the product $\prod\limits_{x \in \mathcal{X}} g(x) $ fails to converge absolutely, then the multiplicative functional is not defined at the configuration $\mathcal{X}$.

\subsection{Locally trace class operators and their kernels} 
Let $L^2(E, \mu)$ denote the complex Hilbert space of $\C$-valued square integrable functions on $E$. Let $\mathscr{S}_1(E, \mu)$ be the space of trace class operators on $L^2(E, \mu)$ equipped with the trace class norm $\n \cdot \n_{\mathscr{S}_1}$.   Let $\mathscr{S}_{1, \loc}(E, \mu)$ be the space of locally trace class operators, that is, the space of bounded operators $K: L^2(E, \mu) \rightarrow L^2(E, \mu)$ such that for any bounded subset $B \subset E$,  we have 
$$
\chi_B K \chi_B \in \mathscr{S}_1(E, \mu).
$$

A locally trace class operator $K$ admits a kernel, for which we use the same symbol $K$.  In this paper, we are especially interested in locally trace class orthogonal projection operators.  Let, therefore, $\Pi \in \mathscr{S}_{1, \loc}$ be an operator of orthogonal projection onto a closed subspace $L \subset L^2(E, \mu)$.  All kernels considered in this paper are supposed to satisfy the following

\begin{ass}\label{Asump1} There exists a subset $\widetilde{E} \subset E$, satisfying $\mu(E\setminus \widetilde{E}) = 0$ such that 
\begin{itemize}
\item For any $q \in \widetilde{E}$, the function $h_q(\cdot) = \Pi(\cdot, q) $ lies in $L^2(E, \mu)$ and for any $f \in L^2(E, \mu)$, we have
$$
(\Pi f)(q) = \langle f, h_q \rangle_{L^2(E, \mu)}.
$$   
In particular, if $f$ is a function in $L$, then by letting  $f(q) = \langle f, h_q\rangle_{L^2(E, \mu)}$, for any $q\in \widetilde{E}$, the function $f$ is defined everywhere on $\widetilde{E}$ (which is slightly stronger than almost everywhere defined on $E$). 
\item The diagonal values $\Pi(q,q)$ of the kernel $\Pi$ are defined for all $q \in \widetilde{E}$  and we have $\Pi(q,q) = \langle h_q, h_q\rangle_{L^2(E,\mu)}$. Moreover, for any bounded Borel subset $B\subset E$,  
$$
\tr(\chi_B \Pi \chi_B) = \intt_B \Pi(x,x) d\mu(x).
$$
\end{itemize} 
\end{ass}

\subsection{Definition of determinantal point processes} A Borel probability $\PP$ on $\Conf(E)$ will be called a point process on $E$.  Recall that the point process $\PP$ is said to admit $k$-th correlation measure $\rho_k$ on $E^k$ if for any continuous compactly supported function $\varphi: E^k \rightarrow \C$, we have 
\begin{align*}
\int\limits_{\Conf(E)} \sum_{x_1, \dots, x_k \in \XX}^*  \varphi(x_1, \dots, x_k) \PP(d \mathcal{X}) =   \intt_{E^k} \varphi(q_1, \dots, q_k) d \rho_k (q_1, \dots, q_k),
\end{align*}
where $\sum\limits^{*}$ denotes the sum over all ordered $k$-tuples of {\it distinct} points $(x_1, \dots, x_k) \in \mathcal{X}^k$. 

Given a bounded measurable subset $A \subset E$, we define $\#_A: \Conf(E) \rightarrow \N\cup\{0\}$ by 
$$
\#_A(\mathcal{X})= \text{ the number of particles in $\mathcal{X} \cap A $.}
$$
Then the point process $\PP$ is determined by the joint distributions of $\#_{A_1}, \dots, \#_{A_n}$, if $A_1, \dots, A_n$ range over the family of bounded measurable subsets of $E$.

A Borel probability measure $\PP$ on $\Conf(E)$ is called determinantal if there exists an operator $K\in \mathscr{S}_{1, \loc}(E, \mu)$ such that for any bounded measurable function $g$, for which $g-1$ is supported in a bounded set $B$, we have
\begin{align}\label{DPP}
\E_{\PP} \Psi_g = \det \left( 1 + (g - 1) K \chi_B \right).
\end{align}
The Fredholm determinant is well-defined since $(g-1) K \chi_B \in \mathscr{S}_{1}(E, \mu)$. The equation \eqref{DPP} determines the measure $\PP$ uniquely and we will denote it by $\PP_K$ and the kernel $K$ is said to be  {\it a correlation kernel} of the determinantal point process $\PP_K$. Note that $\PP_K$ is uniquely determined by $K$, but different kernels may yield the same point process.  

By a theorem due to Macch\`i and Soshnikov  \cite{DPP-M}, \cite{DPP-S}  and Shirai-Takahashi  \cite{ShirTaka0}, any Hermitian positive contraction in $\mathscr{S}_{1, \loc}(E,\mu)$ defines a determinantal point process. In particular, the projection operator on a {\it reproducing kernel Hilbert space} induces a determinantal point process.

\begin{rem}\label{rem-unit}
If  $\alpha: E \rightarrow \C$ is a Borel function such that $| \alpha(x) |  = 1$ for $\mu$-almost every $x\in E$, and if $\Pi \in \mathscr{S}_{1, \loc}$ is the operator of orthogonal projection onto a closed subspace $L \subset L^2(E, \mu)$, then $\Pi$ and $\alpha \Pi \overline{\alpha}$ define the same determinantal point process, i.e., $$\PP_{\alpha \Pi \overline{\alpha}} = \PP_\Pi.$$ Note that $\alpha \Pi \overline{\alpha}$ is the orthogonal projection onto the subspace $\alpha(x) L$.
\end{rem}

\subsection{Palm measures and Palm subspaces} 
In this paper, by Palm measures, we always mean {\it reduced} Palm measures. We refer to \cite{Kallenberg}, \cite{Daley-Vere} for more details on Palm measures of general point processes.

Let $\PP$ be a point process on $\Conf(E)$. Assume that $\PP$ admits $k$-th correlation measure $\rho_k$ on $E^k$. Then for $\rho_k$-almost every $\mathfrak{q} = (q_1, \dots, q_k) \in E^k$ of distinct points in $E$, one can define a point process  on $E$,  denoted by $\PP^{\mathfrak{q}}$ and is called (reduced) Palm measure of $\PP$ conditioned at $\mathfrak{q}$, by the following disintegration formula: for any non-negative Borel test function $u: \Conf(E) \times E^k\rightarrow \R $, 
\begin{align}\label{def-Palm}
\int\limits_{\Conf(E)}  \sum_{q_1, \dots, q_k \in \XX}^{*} u(\XX; \mathfrak{q}) \PP(d \XX)  =    \int\limits_{E^k} \rho_k(d\mathfrak{q}) \!\int\limits_{\Conf(E)} \!  u (\XX \cup \{q_1, \dots, q_k\};  \mathfrak{q})  \PP^{\mathfrak{q}}(d\XX),
\end{align}
where $\sum\limits^{*}$ denotes the sum over all mutually distinct points $q_1, \dots, q_k \in \XX$. 

Informally, $\PP^{\mathfrak{q}}$ is the conditional distribution of $\mathcal{X} \setminus \{q_1, \dots, q_k\}$ on $\Conf(E)$ conditioned to the event that all particles $q_1, \dots, q_k $ are in the configuration $\mathcal{X}$, provided that $\mathcal{X}$ has distribution $\PP$.

Now let $\PP_\Pi$ be a determinantal point process on $\Conf(E)$ induced by the projection operator $\Pi$. Let $\mathfrak{q}= (q_1, \dots, q_k) \in \widetilde{E}^k$ be a $k$-tuple  of distinct points in $\widetilde{E}\subset E$, where $\widetilde{E}$ is as in Assumption \ref{Asump1}. Set 
\begin{align}\label{palm-sub}
L(\mathfrak{q})  = \{\varphi \in L : \varphi (q_1) = \cdots = \varphi(q_k) = 0\}.
\end{align}
The space $L(\mathfrak{q})$ will be called the {\it Palm subspace} of $L^2(E, \mu)$ corresponding to $\mathfrak{q}$. Both the operator of orthogonal projection from $L^2(E, \mu)$ onto the subspace $L(\mathfrak{q})$ and the reproducing kernel of $L(\mathfrak{q})$  will be denoted by $\Pi^{\mathfrak{q}}$.

Explicit formulae for $\Pi^{\mathfrak{q}}$ in terms of the kernel $\Pi$ are known, see Shirai-Takahashi \cite{ST-Palm}. Here we recall that for a single point  $q\in \widetilde{E}$, we have 
\begin{align}\label{1-rank}
\Pi^q(x,y) = \Pi(x,y) - \frac{\Pi(x,q) \Pi(q,y)}{\Pi(q,q)}.
\end{align}
If  $\Pi(q,q)= 0$, we set $\Pi^q = \Pi$. In general, we have the iteration
$$
\Pi^{\mathfrak{q}} = (\cdots (\Pi^{q_1})^{q_2}\cdots)^{q_k}.
$$
Note that the order of the points $q_1, q_2, \cdots q_k$ has no effect in the above iteration.

\begin{thm}[Shirai and Takahashi \cite{ST-Palm}]\label{ST}
For any $k \in \N$ and for $\rho_k$-almost every $k$-tuple $\mathfrak{q}  \in E^k$ of distinct points in $E$, the Palm measure $\PP_\Pi^{\mathfrak{q}}$ is induced by the kernel $\Pi^{\mathfrak{q}}$: 
$$
\PP_\Pi^{\mathfrak{q}} = \PP_{\Pi^{\mathfrak{q}}}.
$$
\end{thm}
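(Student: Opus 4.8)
The plan is to reduce the statement to the rank-one case $k=1$ and then to pin down the Palm measure by computing its correlation functions. Both sides of the asserted identity obey the same iteration: for reduced Palm measures one has the chain rule $\PP^{(q_1,\dots,q_k)}=(\PP^{q_1})^{(q_2,\dots,q_k)}$ for $\rho_k$-almost every $k$-tuple of distinct points (a standard iteration property of reduced Palm measures obtained from \eqref{def-Palm}), while by definition $\Pi^{\mathfrak q}=(\cdots(\Pi^{q_1})^{q_2}\cdots)^{q_k}$; one checks that $\Pi^{q_1}$ again satisfies Assumption \ref{Asump1}, with the same exceptional null set up to enlarging it, so the one-point statement $\PP_\Pi^{q}=\PP_{\Pi^q}$ can be applied repeatedly. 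Hence it suffices to treat $k=1$.

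Fix $q\in\widetilde E$. Since $\rho_1(dq)=\Pi(q,q)\,d\mu(q)$, the set $\{\Pi(q,q)=0\}$ is $\rho_1$-negligible; on it the convention $\Pi^q=\Pi$ is consistent, because $v_q:=\Pi(\cdot,q)=0$ forces every $\varphi\in L$ to vanish at $q$, i.e. $L(q)=L$. Assume then $\Pi(q,q)=\langle v_q,v_q\rangle>0$. The bounded linear functional $\varphi\mapsto\varphi(q)=\langle\varphi,v_q\rangle$ is nonzero on $L$, so $L(q)$ is the orthogonal complement of $\C v_q$ inside $L$, and the orthogonal projection of $L^2(E,\mu)$ onto $L(q)$ is $\Pi-\langle v_q,v_q\rangle^{-1}\,v_q\otimes\overline{v_q}$, whose kernel is exactly the right-hand side of \eqref{1-rank}. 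This operator differs from $\Pi$ by a rank-one, hence trace class, operator, so it lies in $\mathscr S_{1,\loc}(E,\mu)$; being a Hermitian projection, by the Macch\`i-Soshnikov theorem it induces a determinantal point process $\PP_{\Pi^q}$.

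Next I would compute the correlation measures of $\PP_\Pi^q$. Inserting into \eqref{def-Palm} (with $k=1$) test functions of the form $u(\XX;x)=\sum^{*}_{x_1,\dots,x_n\in\XX}\varphi(x;x_1,\dots,x_n)$, one obtains that, for $\rho_1$-almost every $q$ and every $n\ge 1$, the $n$-th correlation function of $\PP_\Pi^q$ equals $\rho_{n+1}^{\PP_\Pi}(q,x_1,\dots,x_n)/\rho_1^{\PP_\Pi}(q)$. Writing $y_0=q$ and $y_i=x_i$, this is $\det[\Pi(y_i,y_j)]_{i,j=0}^n/\Pi(q,q)$; expanding the $(n+1)\times(n+1)$ determinant via the Schur complement in the $y_0$-row and $y_0$-column gives
\[
\det[\Pi(y_i,y_j)]_{i,j=0}^n=\Pi(q,q)\cdot\det\!\Big[\Pi(x_i,x_j)-\frac{\Pi(x_i,q)\Pi(q,x_j)}{\Pi(q,q)}\Big]_{i,j=1}^n=\Pi(q,q)\cdot\det[\Pi^q(x_i,x_j)]_{i,j=1}^n,
\]
so the $n$-th correlation function of $\PP_\Pi^q$ is $\det[\Pi^q(x_i,x_j)]_{i,j=1}^n$, which is precisely the $n$-th correlation function of $\PP_{\Pi^q}$.

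Finally, since $\Pi^q$ is a Hermitian contraction in $\mathscr S_{1,\loc}$, Hadamard's inequality gives $\det[\Pi^q(x_i,x_j)]_{i,j=1}^n\le\prod_i\Pi^q(x_i,x_i)\le\prod_i\Pi(x_i,x_i)$, so for any bounded Borel $B$ the $n$-th factorial moment over $B$ grows no faster than $C_B^{n}$; this bound puts the process in the class where the correlation measures determine the law uniquely (Lenard's theorem). As $\PP_\Pi^q$ and $\PP_{\Pi^q}$ have the same correlation measures, they coincide, and iterating over the points of $\mathfrak q$ proves the theorem. The main obstacle is exactly this last step — establishing that the reduced Palm measure, a priori merely a point process admitting correlation measures, is genuinely determinantal — for which the uniqueness-from-correlations argument is essential, together with the bookkeeping needed to carry the $\rho_k$-almost-everywhere qualifiers through the iteration; the Schur-complement identity itself is routine.
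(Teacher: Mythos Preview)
The paper does not supply a proof of this statement: Theorem~\ref{ST} is quoted as a background result of Shirai and Takahashi \cite{ST-Palm}, with no argument given in the paper itself. Your proposal is a correct reconstruction of the standard proof --- reduction to $k=1$ via the chain rule for reduced Palm measures and the kernel iteration $\Pi^{\mathfrak q}=(\cdots(\Pi^{q_1})^{q_2}\cdots)^{q_k}$, the Schur-complement identity showing that the $n$-th correlation function of $\PP_\Pi^q$ equals $\det[\Pi^q(x_i,x_j)]_{i,j=1}^n$, and Lenard's uniqueness criterion to conclude --- and is in essence the line of argument in \cite{ST-Palm}.
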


\subsection{Rigidity}
Let $\PP$ be a point process over $\C$. We will use the following result on the rigidity of point processes (see Definition \ref{defn-rig}).
\begin{thm}[Ghosh \cite{Ghosh-sine}, Ghosh and Peres \cite{Ghosh-rigid}]\label{thm-GP}
Let $\PP$ be a point process on $\C$ whose first correlation measure $\rho_1$ is absolutely continuous with respect to the Lebesgue measure. Suppose that for any $R>0$ and $0< \varepsilon< 1$, there exists a $C_c^2$-smooth function $\Phi_{\varepsilon, R}$ such that $\Phi_{\varepsilon, R}(z) = 1$ on $\{z\in\C: |z|\le R\}$ and $\Var_{\PP} (S_{\Phi_{\varepsilon,R}}) < \varepsilon.$ Then the point process $\PP$ is rigid.
\end{thm}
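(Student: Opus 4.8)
The plan is to follow the original argument of Ghosh and Peres. Since rigidity in the sense of Definition \ref{defn-rig} has to be verified for every bounded open set with Lebesgue-negligible boundary, I would fix one such $\mathcal D$, write $\mathcal G := \sigma\big(\#_A : A\subset\C\setminus\mathcal D \text{ bounded Borel}\big)$ for the exterior $\sigma$-algebra, and show that $\#_{\mathcal D}$ coincides $\PP$-almost surely with a $\mathcal G$-measurable function — which is exactly the assertion that $\#_{\mathcal D}(\ZZ) = F_{\mathcal D}(\ZZ\setminus\mathcal D)$ for $\PP$-a.e.\ $\ZZ$. Only the case $\varphi\equiv 1$ on $\mathcal D$ is relevant to Definition \ref{defn-rig} (and to the application in Proposition \ref{thm-rigid}), so I take $\varphi\equiv 1$ on $\mathcal D$; a general continuous $\varphi$ is treated by the identical argument and yields that the linear statistic $\sum_{z\in\ZZ\cap\mathcal D}\varphi(z)$ is $\PP$-a.s.\ $\mathcal G$-measurable. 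The mechanism is that a sequence of smooth compactly supported linear statistics with vanishing variance forces the interior count to be approximable in $L^2(\PP)$ by functionals of the exterior configuration, after which closedness in $L^2(\PP)$ of the subspace $L^2(\Conf(\C),\mathcal G,\PP)$ transfers measurability to the limit.

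The first step is the decomposition. Since $\rho_1\ll\Leb$ and $\Leb(\partial\mathcal D)=0$ we have $\E_\PP\#_{\partial\mathcal D}=\rho_1(\partial\mathcal D)=0$, so $\PP$-almost every configuration has no particle on $\partial\mathcal D$. Choosing $\varepsilon_n=1/n$ and the corresponding $\Phi_n:=\Phi_{\varepsilon_n}\in C_c^2(\C)$ with $\Phi_n\equiv 1$ on $\mathcal D$ and $\Var_\PP(S_{\Phi_n})<1/n$, the statistic $S_{\Phi_n}$ is a finite sum on each configuration (as $\Phi_n$ has compact support) and, for $\PP$-a.e.\ $\ZZ$,
\begin{align*}
S_{\Phi_n}(\ZZ) = \sum_{z\in\ZZ\cap\mathcal D}\Phi_n(z) + \sum_{z\in\ZZ\setminus\overline{\mathcal D}}\Phi_n(z) = \#_{\mathcal D}(\ZZ) + \widetilde H_n(\ZZ),
\end{align*}
where $\widetilde H_n(\ZZ):=\sum_{z\in\ZZ\setminus\overline{\mathcal D}}\Phi_n(z)$. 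The key point is that $\widetilde H_n$ is $\mathcal G$-measurable: it depends only on the restriction of $\ZZ$ to $\C\setminus\overline{\mathcal D}\subset\C\setminus\mathcal D$, and for continuous $g$ and bounded Borel $B$ the functional $\ZZ\mapsto\sum_{z\in\ZZ\cap B}g(z)$ is a pointwise limit of Riemann-type combinations $\sum_j g(\xi_j)\#_{B_j}$ over increasingly fine Borel partitions of $B$, hence measurable with respect to $\sigma(\#_A:A\subset B)$.

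The second step is the passage to the limit. From $\Var_\PP(S_{\Phi_n})\to 0$ we get $S_{\Phi_n}-\E_\PP S_{\Phi_n}\to 0$ in $L^2(\PP)$. To avoid any second-moment assumption on $\#_{\mathcal D}$ itself I would argue through differences: for $\PP$-a.e.\ $\ZZ$ one has $\widetilde H_n(\ZZ)-\widetilde H_1(\ZZ)=S_{\Phi_n}(\ZZ)-S_{\Phi_1}(\ZZ)=S_{\Phi_n-\Phi_1}(\ZZ)$, and $S_{\Phi_n-\Phi_1}$ has finite variance (triangle inequality in $L^2(\PP)$) and finite mean, so the centered variables $\widetilde H_n-\widetilde H_1-\E_\PP(\widetilde H_n-\widetilde H_1)=S_{\Phi_n-\Phi_1}-\E_\PP S_{\Phi_n-\Phi_1}$ lie in $L^2(\PP)$, are $\mathcal G$-measurable, and as $n\to\infty$ converge in $L^2(\PP)$ to $-(S_{\Phi_1}-\E_\PP S_{\Phi_1})$. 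Closedness of $L^2(\Conf(\C),\mathcal G,\PP)$ in $L^2(\PP)$ makes the limit $\mathcal G$-measurable; hence $S_{\Phi_1}$, and with it $\#_{\mathcal D}=S_{\Phi_1}-\widetilde H_1$, agrees $\PP$-a.s.\ with a $\mathcal G$-measurable function. Since $\mathcal D$ was arbitrary, $\PP$ is rigid. (When the approximants can be chosen with $0\le\Phi_n\le 1$, as in our applications, the bound $0\le\#_{\mathcal D}\le S_{\Phi_n}$ already places $\#_{\mathcal D}$ in $L^2(\PP)$ and one may pass to the limit directly, without the differences.)

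I expect the only delicate point of the deduction itself to be the $\mathcal G$-measurability of $\widetilde H_n$ — the statement that a continuous linear statistic over a Borel set is generated by the counting variables supported in that set. It is routine measure theory, but it is the conceptual core of the transfer of information from inside $\mathcal D$ to its exterior. The genuinely substantive work lies entirely on the hypothesis side: constructing $\Phi_\varepsilon$ with $\Var_\PP(S_{\Phi_\varepsilon})$ arbitrarily small, which in the application to $\PP_{B_\psi}$ rests on Christ's pointwise estimate for the reproducing kernel. The deduction above is soft.
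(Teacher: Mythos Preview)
The paper does not supply its own proof of Theorem~\ref{thm-GP}; the result is quoted from Ghosh and Ghosh--Peres and used as a black box in the proof of Proposition~\ref{thm-rigid}. There is therefore nothing in the paper to compare your argument against.

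That said, your reconstruction is correct and is exactly the Ghosh--Peres mechanism: decompose $S_{\Phi_n}=\#_{\mathcal D}+\widetilde H_n$ with $\widetilde H_n$ measurable with respect to the exterior $\sigma$-algebra, then use $\Var_\PP(S_{\Phi_n})\to 0$ to force $\#_{\mathcal D}$ into the $\PP$-completion of that $\sigma$-algebra. Your handling of the two technical points---that $\PP$-a.e.\ configuration avoids $\partial\mathcal D$ (via $\rho_1\ll\Leb$), and that a continuous linear statistic over a Borel set is measurable with respect to the counting variables on that set---is accurate. The device of passing to differences $\widetilde H_n-\widetilde H_1$ so as not to presuppose $\#_{\mathcal D}\in L^2(\PP)$ is a nice refinement; as you observe, in the paper's actual application one has $0\le\Phi_\varepsilon\le 1$, which gives $\#_{\mathcal D}\le S_{\Phi_\varepsilon}\in L^2(\PP)$ directly and makes the differences unnecessary.
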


The reader is referred also to \cite{Buf-rigid} \cite{BDQ} for more results on rigidity of point processes. 

\section{Generalized Fock spaces and Bergman spaces}\label{sec-bergman}
Let $\mathscr{O}(\C)$ and $\mathscr{O}(\D)$ denote the space of holomorphic functions on the whole plane $\C$ and on the unit disk $\D$ respectively.

Let $\psi: \C \rightarrow \R$ be a function satisfying the assumption \eqref{sub-h} and denote 
$$
dv_\psi(z) = e^{- 2\psi(z)} d\lambda(z),
$$
where $d \lambda$ is the Lebesgue measure on $\C$. 
\begin{defn}\label{defn-fock}
If the linear subspace 
 $$
 \mathscr{F}_\psi: =L^2(\C, dv_\psi)  \cap \mathscr{O}(\C)
 $$
is closed in $L^2(\C, dv_\psi)$, then it will called generalized Fock space with respect to the measure $dv_\psi$.  The orthogonal projection $P: L^2(dv_\psi) \rightarrow \mathscr{F}_\psi$ is  given by integration against a reproducing kernel $B_\psi(z,w)$ (analytic in $z$ and anti-analytic in $w$): 
\begin{align}\label{reproducing}
(P f)(z) = \int_\C  f(w) B_\psi(z,w) e^{-2 \psi(w)} d\lambda(w).
\end{align}
\end{defn}

\begin{defn}\label{defn-berg}
Let $\D \subset \C$ be the open unit disc.  A weight function $\omega:  \D\rightarrow \R^{+}$ is called a {\it Bergman weight}, if it is integrable with respect to the Lebesgue measure and  the generalized Bergman space 
$$
\mathscr{B}_\omega :  =  L^2(\D, \omega d\lambda) \cap \mathscr{O}(\D)
$$ is closed in $ L^2(\D, \omega d\lambda)$ and the evaluation functionals $f \rightarrow f(z)$ on $\mathscr{B}_\omega$ are uniformly bounded on any compact subset of $\D$. In such situation, the space  $\mathscr{B}_\omega$ is a reproducing kernel Hilbert space, its reproducing kernel will be denoted as $B_\omega$.
\end{defn}

\bigskip

We shall need Christ's pointwise estimate (cf. \cite{Christ}, \cite{Delin}, \cite{Schuster}) of the reproducing kernel $B_\psi(z,w)$. Theorem 3.2 in \cite{Schuster} gives the estimate in the form most convenient for us. 

\begin{thm}[Christ]\label{Christ}
Let $\psi\in C^2(\C)$ be a real-valued function satisfying \eqref{sub-h}. Then there are contants $\delta, C>0$ such that for all $z, w \in \C$, 
\begin{align}\label{pt-es}
| B_\psi(z,w)|^2 e^{-2 \psi(z) - 2 \psi(w)} \le C e^{- \delta | z - w|}.
\end{align}
In particular, for all $z\in \C$, 
\begin{align}\label{d-es}
B_\psi(z,z)e^{-2 \psi (z) } \le C.
\end{align}
\end{thm}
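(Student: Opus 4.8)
The estimate \eqref{pt-es} is classical and I would follow the scheme of Christ \cite{Christ}, Delin \cite{Delin} and Schuster \cite{Schuster}. The plan is to split the statement into two parts: a pointwise \emph{diagonal} bound, and an \emph{off-diagonal} exponential decay, the latter being the substantive point. Throughout, the two sides of \eqref{sub-h} play complementary roles: the upper bound $\Delta\psi\le M$ controls the local size of the weight on discs of fixed radius, while the strict lower bound $\Delta\psi\ge m>0$ supplies the positivity of curvature that drives the decay through Hörmander's $L^2$-theory for $\bar\partial$.

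The first step is a weighted sub-mean-value inequality: there is $C=C(M)$ such that for every $f\in\mathscr{F}_\psi$ and every $z\in\C$,
\[
|f(z)|^2 e^{-2\psi(z)} \le C \int_{D(z,1)} |f(\zeta)|^2 e^{-2\psi(\zeta)}\, d\lambda(\zeta).
\]
To prove this, on the disc $D(z,1)$ I would solve $\Delta\eta=\Delta\psi$ with zero boundary data; since $|\Delta\psi|\le M$ on a disc of fixed radius, the Newtonian potential gives $\|\eta\|_{\infty}\le C(M)$, and $\psi-\eta$ is harmonic, hence equal to $\Re H$ for a holomorphic $H$. Then $g=fe^{-H}$ is holomorphic, $|g|^2=|f|^2e^{-2\psi}e^{2\eta}$ is comparable to $|f|^2e^{-2\psi}$ up to the factor $e^{2\|\eta\|_\infty}$, and the ordinary sub-mean-value property of the subharmonic function $|g|^2$ yields the displayed inequality. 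Taking the supremum over $\|f\|_{L^2(dv_\psi)}\le1$ and invoking the extremal characterization $B_\psi(z,z)=\sup\{|f(z)|^2:\|f\|_{L^2(dv_\psi)}\le1\}$ gives the diagonal bound \eqref{d-es} at once.

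The main obstacle is the off-diagonal decay, and this is exactly where the lower bound $\Delta\psi\ge m>0$ becomes indispensable. Fix $w\in\C$ and let $\rho$ be a smoothed distance to $w$, equal to $|\zeta-w|$ for $|\zeta-w|\ge1$, smooth near $w$, with $|\nabla\rho|\le1$ and $\|\Delta\rho\|_\infty\le C_0$. The target is the weighted energy bound
\[
\int_{\C}|B_\psi(\zeta,w)|^2 e^{-2\psi(\zeta)}e^{2\delta\rho(\zeta)}\, d\lambda(\zeta)\le C\, B_\psi(w,w),\qquad \delta:=\tfrac{m}{4C_0}.
\]
I would obtain this via Hörmander's $L^2$-estimate for $\bar\partial$ applied with the weight $\Phi=2\psi-2\delta\rho+\log|\zeta-w|^2$: the logarithmic singularity pins down the value at $w$, and since $\Delta\Phi\ge 2\Delta\psi-2\delta\|\Delta\rho\|_\infty\ge 2m-2\delta C_0\ge m>0$ away from $w$, the estimate is solvable with a constant depending only on $m$. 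Correcting a cutoff of $B_\psi(\cdot,w)$ by the resulting solution produces the weighted bound, the compactly supported $\bar\partial$-error on a fixed annulus around $w$ being absorbed by the already-established diagonal estimate. Finally, feeding this bound back into the sub-mean-value inequality of the first step, using $e^{2\delta\rho(\zeta)}\ge c\,e^{2\delta|z-w|}$ for $\zeta\in D(z,1)$ together with $B_\psi(w,w)\le Ce^{2\psi(w)}$, upgrades the $L^2$-bound to the pointwise estimate $|B_\psi(z,w)|^2e^{-2\psi(z)-2\psi(w)}\le Ce^{-2\delta|z-w|}$, which is \eqref{pt-es} after renaming $\delta$.

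I expect the delicate point to be the bookkeeping in the $\bar\partial$-step: arranging the auxiliary weight $\delta\rho$ so that it simultaneously keeps $\Delta\Phi$ uniformly positive, grows linearly in $|\zeta-w|$ so as to yield genuine exponential (rather than merely polynomial) decay, and leaves the cutoff error controllable by the diagonal estimate. Everything else---the diagonal bound and the passage from the weighted $L^2$-bound to the pointwise estimate---is comparatively soft and rests only on the sub-mean-value inequality of the first step.
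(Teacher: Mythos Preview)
The paper does not prove Theorem~\ref{Christ}; it is quoted without proof as a known result, with references to Christ~\cite{Christ}, Delin~\cite{Delin}, and Schuster--Varolin~\cite{Schuster} (specifically Theorem~3.2 of~\cite{Schuster}, which gives the estimate in exactly the form stated). There is therefore no in-paper proof to compare your proposal against.

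That said, your outline is precisely the classical route of those references. The diagonal bound via a weighted sub-mean-value inequality---replacing $\psi$ on a unit disc by a harmonic function plus a bounded correction, using only $\Delta\psi\le M$---is correct as stated. The off-diagonal step, namely obtaining the weighted $L^2$-bound
\[
\int_{\C}|B_\psi(\zeta,w)|^2 e^{-2\psi(\zeta)}e^{2\delta\rho(\zeta)}\, d\lambda(\zeta)\le C\, B_\psi(w,w)
\]
from H\"ormander's $L^2$-estimate for $\bar\partial$ with the perturbed subharmonic weight $\psi-\delta\rho$ (possible exactly because $\Delta\psi\ge m>0$), and then converting this into the pointwise decay via the sub-mean-value inequality, is also the standard scheme. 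Your description of the $\bar\partial$-step (``correcting a cutoff of $B_\psi(\cdot,w)$'') is somewhat compressed: in the cited references the argument passes through the extremal characterization of the Bergman kernel together with a competitor built from a cutoff and a H\"ormander solution forced to vanish at $w$ by the logarithmic singularity in the weight, and one has to be slightly careful to avoid a circularity (the weighted $L^2$-norm of $B_\psi(\cdot,w)$ is exactly what one is trying to bound). But this is a matter of bookkeeping, not of strategy, and the details are spelled out in~\cite{Delin} and~\cite{Schuster}. Your identification of the delicate point---balancing the auxiliary weight so that $\Delta\Phi$ stays uniformly positive while $\rho$ grows linearly---is accurate.
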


\begin{rem}
For the Gaussian case $\psi (z) = \frac{1}{2}| z|^2$, we have the following  explicit formula 
$$
| B_{\psi}(z,w)|^2 e^{-2 \psi(z) - 2 \psi(w)} = \pi^{-2} e^{- | z - w|^2}.
$$
\end{rem}

\section{Regularized multiplicative functionals}\label{S-reg-m}

\subsection{Statement of the main result}
As \eqref{violate-blaschke} shows, simple multiplicative functionals cannot be used in our situation. Following \cite{BQS}, we  use  regularized multiplicative functionals whose definitions we now recall. 

Let $f : E \rightarrow \C$ be a Borel function. Set 
\begin{align}\label{var-pi-f}
\Var(\Pi, f)  = \frac{1}{2} \iint_{E^2} | f(x)- f(y)|^2 | \Pi(x,y)|^2 d\mu(x) d\mu(y).
\end{align}
Introduce the Hilbert space $\VV(\Pi)$ in the following way: the elements of $\VV(\Pi)$ are functions $f$ on $E$ satisfying $\Var(\Pi, f) < \infty$; functions that differ by a constant are identified. The square of the norm of an element $f\in \VV(\Pi)$ is precisely $\Var(\Pi, f)$. 

Let $S_f: \Conf(E)\rightarrow \C$  be the corresponding additive functional, such that $S_f\in L^1(\Conf(E), \PP_\Pi)$.  Set 
\begin{align}\label{add}
\overline{S}_f = S_f  - \E_{\PP_{\Pi}} S_f.
\end{align}
 If, moreover, $S_f \in L^2(\Conf(E), \PP_\Pi)$, then it is easy to see that 
\begin{align}\label{var}
\E_{\PP_\Pi} | \overline{S}_f|^2 = \Var_{\PP_{\Pi}} (S_f) =  \Var(\Pi, f).
\end{align}

\begin{defn}\label{d-v0}
Let  $\VV_0(\Pi) $ be the subset of functions $f\in \VV(\Pi)$, such that there exists an exhausting sequence of bounded subsets $(E_n)_{n\ge 1}$, depending on $f$,  so that 
$$
f \chi_{E_n} \xrightarrow[n\to \infty]{\VV(\Pi)} f.
$$
\end{defn}

The identity \eqref{var} implies that there exists a unique isometric embedding (as metric spaces)
$$
\overline{S}: \VV_0(\Pi) \rightarrow L^2(\Conf(E), \PP_\Pi)
$$ extending the definition \eqref{add}, so that we have 
\begin{align}\label{def-S-f}
\overline{S}_f=\lim_{n\to \infty}  \sum_{x \in \XX \cap E_n} f(x)- \E_{\PP_\Pi}  \sum_{x \in \XX\cap E_n} f(x).
\end{align}

\begin{defn}\label{def-mf-tilde}
For a non-negative function $g: E \rightarrow \R$ such that $\log g \in \VV_0(\Pi)$ we set 
\begin{align*}
\widetilde{\Psi}_g = \exp(\overline{S}_{\log g}).
\end{align*}
If, moreover, $\widetilde{\Psi}_g \in L^1(\Conf(E), \PP_\Pi)$, then we set 
$$
\overline{\Psi}_g = \frac{\widetilde{\Psi}_g}{ \E_{\PP_\Pi} \widetilde{\Psi}_g }.
$$
\end{defn}
The function $\overline{\Psi}_g$ is called the regularized multiplicative functional associated to $g$ and $\PP_\Pi$. For specifying the dependence on $\PP_\Pi$, the notation $\overline{\Psi}_{g}^{\Pi}$ will also be used.  By definition, for $\PP_\Pi$-almost every configuration $\XX$, the following identity holds:
\begin{align}\label{finite-app}
\log \overline{\Psi}_g^\Pi(\XX) = \lim_{n\to \infty}  \sum_{x \in \XX \cap E_n} \log g(x)  - \E_{\PP_\Pi}  \left(\sum_{x \in \XX \cap E_n} \log g(x)\right).
\end{align}
 
Clearly, $\overline{\Psi}_g^\Pi$ is a probability density  for $\PP_\Pi$, since $\E_{\PP_\Pi} (\overline{\Psi}_g^\Pi) = 1$.

\begin{thm}\label{thm-C}
Let $g$ be a nonnegative Borel function on $E$  such that it is positive up to a $\mu$-negligible set and for any $\varepsilon>0$ the subset $\{x \in E: | g(x) - 1| \ge \varepsilon \} $ is  bounded. Assume moreover that there exists an increasing sequence of bounded subsets $(E_n)_{n \ge 1}$ exhausting the whole phase space $E$ and such that
\begin{align}\label{singularity}
\intt\limits_{E_n} | g(x)-1| \Pi(x,x)d\mu(x) < \infty;
\end{align}
\begin{align}\label{decay}
 \quad \intt\limits_{ E_n^c} | g(x)-1|^3 \Pi(x,x)d\mu(x)< \infty;
\end{align}
\begin{align}\label{variance}
\iint\limits_{E_n^c \times E_n^c} | g(x) - g(y) |^2 | \Pi(x,y)|^2 d\mu(x)d\mu(y) < \infty;
\end{align}
\begin{align}\label{flat-cut}
\lim_{n\to \infty}\tr (\chi_{E_n} \Pi | g-1|^2 \chi_{E_n^c} \Pi  \chi_{E_n}   ) = 0.
\end{align}
Then $\widetilde{\Psi}_g\in L^1(\Conf(E), \PP_\Pi)$. If the subspace $\sqrt{g}L$ is closed and the corresponding operator of orthogonal projection $\Pi^g$  is locally of trace class and satisfies, for sufficiently large $R>0$, the condition 
\begin{align}\label{extra}
\tr(\chi_{\{g > R\}} \Pi^g \chi_{\{g>R\}}) <\infty
\end{align}  
then we also have $\PP_{\Pi^g} = \overline{\Psi}_{g}^{\Pi} \cdot \PP_{\Pi}$. 
\end{thm}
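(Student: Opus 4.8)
The plan is to reduce Theorem~\ref{thm-C} to the case of a \emph{bounded} function $g$ with $g-1$ of bounded support --- essentially Proposition~4.6 of \cite{BQS} --- by truncating $g$ and then passing to the limit. First, since $g$ vanishes on $E_0$, the set $E_0$ lies in the bounded set $\{x\in E:|g(x)-1|\ge 1\}$, so $E_0\subset E_n$ for all large $n$; after discarding finitely many indices we assume $E_0\subset E_n$ for all $n$. For $n\ge 1$ and $R>1$ set $g_{n,R}:=\min(g,R)\,\chi_{E_n}+\chi_{E_n^c}$. Then $g_{n,R}$ is bounded, $g_{n,R}-1$ is supported in the bounded set $E_n$, $g_{n,R}$ vanishes precisely on $E_0$, and it is bounded below by a positive constant off $E_0$. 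Using $\tr(\chi_{E_0}\Pi\chi_{E_0})<\infty$ and the hypothesis that no nonzero element of $L$ is supported in $E_0$, one checks that $\sqrt{g_{n,R}}\,L$ is closed; write $\Pi^{g_{n,R}}$ for the corresponding projection. Proposition~4.6 of \cite{BQS}, combined with Theorem~\ref{ST}, applies to each $g_{n,R}$ and yields
\begin{align*}
\E_{\PP_\Pi}\widetilde{\Psi}_{g_{n,R}}=\exp\Big(-\intt_{E_n}\log g_{n,R}(x)\,\Pi(x,x)\,d\mu(x)\Big)\,\det\big(1+(g_{n,R}-1)\Pi\chi_{E_n}\big)\in(0,\infty),
\end{align*}
together with the identity $\overline{\Psi}_{g_{n,R}}\cdot\PP_\Pi=\PP_{\Pi^{g_{n,R}}}$.

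For the first conclusion, \eqref{singularity}, \eqref{decay}, \eqref{variance} together with the behaviour of $g$ near $E_0$ yield that $\log g\in\VV_0(\Pi)$ (so $\widetilde{\Psi}_g=\exp(\overline{S}_{\log g})$ is well defined, cf.\ Definitions~\ref{d-v0} and \ref{def-mf-tilde}) and that $\overline{S}_{\log g_{n,R}}\to\overline{S}_{\log g}$ in $L^2(\Conf(E),\PP_\Pi)$, first as $R\to\infty$ and then as $n\to\infty$; in particular $\widetilde{\Psi}_{g_{n,R}}\to\widetilde{\Psi}_g$ in probability. The heart of the matter is a uniform bound on the normalizing constants above. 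Using $\log\det(1+T)=\tr T+\log{\det}_2(1+T)$ with $T=(g_{n,R}-1)\Pi\chi_{E_n}$ and the identity
\begin{align*}
\tr\big(((g_{n,R}-1)\Pi\chi_{E_n})^2\big)=\intt_{E_n}|g_{n,R}-1|^2\,\Pi(x,x)\,d\mu(x)-\Var(\Pi,g_{n,R}),
\end{align*}
the logarithm of $\E_{\PP_\Pi}\widetilde{\Psi}_{g_{n,R}}$ equals $\half\Var(\Pi,g_{n,R})$ plus error terms dominated by $\intt|g_{n,R}-1|^3\,\Pi(x,x)\,d\mu$ and similar cubic traces, which by \eqref{decay} and \eqref{variance} stay bounded uniformly in $n$. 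Carrying out the same estimate for $\widetilde{\Psi}_{g_{n,R}}^{1+\delta}=\widetilde{\Psi}_{(g_{n,R})^{1+\delta}}$ with a small $\delta>0$ --- the \emph{cube} in \eqref{decay} providing precisely the required margin --- shows that $\{\widetilde{\Psi}_{g_{n,R}}\}$ is uniformly integrable. Combined with the convergence in probability this gives $\widetilde{\Psi}_g\in L^1(\Conf(E),\PP_\Pi)$, $\E_{\PP_\Pi}\widetilde{\Psi}_{g_{n,R}}\to\E_{\PP_\Pi}\widetilde{\Psi}_g$, and hence $\overline{\Psi}_{g_{n,R}}\to\overline{\Psi}_g$ in $L^1(\PP_\Pi)$.

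For the second conclusion, assume in addition that $\sqrt g\,L$ is closed, with projection $\Pi^g$, and that \eqref{extra} holds. Passing to the limit in $\overline{\Psi}_{g_{n,R}}\PP_\Pi=\PP_{\Pi^{g_{n,R}}}$: the left side tends to $\overline{\Psi}_g\PP_\Pi$ in total variation, so it suffices to prove $\PP_{\Pi^{g_{n,R}}}\to\PP_{\Pi^g}$ weakly, for which it is enough that $\chi_B\Pi^{g_{n,R}}\chi_B\to\chi_B\Pi^g\chi_B$ in $\mathscr{S}_1$-norm for every bounded Borel $B$. This is done in two steps. Fixing $n$ and letting $R\to\infty$, the subspaces $\sqrt{g_{n,R}}\,L$ increase to $\overline{\sqrt{g_n}\,L}$, where $g_n:=g\chi_{E_n}+\chi_{E_n^c}$; here condition \eqref{extra} (together with the fact that $\{g>R\}$ is eventually disjoint from $E_0$) is exactly what ensures that $\Pi^{g_n}$ is still locally trace class and that $\Pi^{g_{n,R}}\to\Pi^{g_n}$ locally in $\mathscr{S}_1$-norm. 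Then letting $n\to\infty$, one has $\sqrt{g_n}\,\varphi\to\sqrt g\,\varphi$ in $L^2$ for $\varphi\in L$, and the convergence $\Pi^{g_n}\to\Pi^g$ locally in $\mathscr{S}_1$-norm is controlled by \eqref{variance} and the ``flat cut'' condition \eqref{flat-cut}: the quantity $\tr(\chi_{E_n}\Pi|g-1|^2\chi_{E_n^c}\Pi\chi_{E_n})$ measures exactly the interaction between the truncated region and its complement, that is, the defect by which $\sqrt{g_n}\,L$ fails to approximate $\sqrt g\,L$, and \eqref{flat-cut} forces this defect to $0$. Assembling the two steps gives $\PP_{\Pi^g}=\overline{\Psi}_g^\Pi\cdot\PP_\Pi$. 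The main obstacle throughout is this last point --- establishing the local $\mathscr{S}_1$-convergence of the projections onto $\sqrt{g_{n,R}}\,L$ when $g$ is unbounded: unlike in \cite{BQS} the approximating subspaces are not nested inside $\sqrt g\,L$, and controlling the part of $\sqrt g\,\varphi$ living where $g$ is large genuinely requires the extra decay encoded in \eqref{extra} combined with the cancellation \eqref{flat-cut}.
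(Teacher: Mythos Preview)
Your strategy---truncate, apply the compactly-supported case, pass to the limit via uniform integrability---has the right shape, and the paper's proof of the core Proposition~\ref{technical-prop} does follow exactly this pattern. But the paper does \emph{not} apply that limiting argument to $g$ itself. It first reduces to $g>0$ by restricting to $E_0^c$, and then performs a \emph{multiplicative} decomposition $g=g_1g_2g_3$ with $|g_1-1|<1$ (so $g_1$ is bounded above and bounded away from~$0$) and with $g_2-1,\,g_3-1$ compactly supported. The truncation-and-limit machinery, together with the cubic error estimate, is applied only to $g_1$; the possibly unbounded factor $g_3$ is handled by the elementary identity $\E_{\PP_{\Pi^{g_1g_2}}}\Psi_{g_3}=\det(1+(g_3-1)\Pi^{g_1g_2})$, which is finite using only the \emph{linear} condition~\eqref{singularity}.

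The gap in your argument is the uniform-integrability step. The expansion $\log\det(1+T)=\tr T-\tfrac12\tr(T^2)+O(\tr|T|^3)$, in the form the paper makes precise via the self-adjoint operators $T_g^{\pm}$ and inequality~\eqref{log-det}, carries a constant $C_{\varepsilon,M}$ depending on $\varepsilon=\inf g$ and $M=\sup g$. Your approximants have $\inf g_{n,R}=0$ on $E_0$ and $\sup g_{n,R}=R$, so this constant is not uniform. Concretely: hypothesis~\eqref{singularity} gives only $\int_{E_n}|g-1|\,\Pi(x,x)\,d\mu<\infty$ on bounded sets, \emph{not} $\int_{E_n}|g-1|^3\,\Pi(x,x)\,d\mu<\infty$; since $|g_{n,R}-1|$ can be as large as $R-1$ on the bounded set where $g$ is large, your cubic error $\int|g_{n,R}-1|^3\,\Pi\,d\mu$ is bounded for each fixed $R$ but grows like $R^2$ as $R\to\infty$. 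The cube in~\eqref{decay} is available only on $E_n^c$, where $|g-1|$ is already small, so no reordering of limits repairs this. This is exactly what the multiplicative splitting is for: it separates the region where $g$ is large or zero (handled by \eqref{singularity} and a Fredholm determinant, no Taylor expansion) from the tail where $|g-1|<1$ (where the cubic estimate is legitimate with a fixed constant). A smaller but related issue: your displayed formula for $\E_{\PP_\Pi}\widetilde\Psi_{g_{n,R}}$ contains $\int_{E_n}\log g_{n,R}\,\Pi\,d\mu$, which is $-\infty$ since $g_{n,R}$ vanishes on $E_0$; the paper's initial restriction to $E_0^c$ is what makes such expressions meaningful.
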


\begin{rem}\label{HS-integral}
Note that 
$$
\tr (\chi_{E_n} \Pi | g-1|^2 \chi_{E_n^c} \Pi  \chi_{E_n}   )  = \int_{E_n} d\mu(y) \int_{E_n^c} | g(x)-1|^2 | \Pi(x,y)|^2 d\mu(x).
$$
\end{rem}

Theorem \ref{thm-C} is a strengthening of  and will be derived from \cite[Prop. 4.2]{BQS} which we reformulate here in the form convenient for us.
\begin{prop}[{Proposition 4.2 in \cite{BQS}, particular case}]\label{prop-BQS}
Let $g$ be a nonnegative Borel function on $E$ satisfying $g|_{E_0} =0$, $g|_{E_0^c} >0$ and such that for any $\varepsilon>0$ the subset $A_\varepsilon =  \{x \in E: | g(x) - 1| \ge \varepsilon \} $ is  bounded and 
\begin{align}\label{sing-Buf}
\intt\limits_{A_\varepsilon} | g(x)-1| \Pi(x,x)d\mu(x) < \infty;
\end{align}
\begin{align}\label{decay-Buf}
 \intt\limits_{A^c_\varepsilon} | g(x)-1|^2 \Pi(x,x)d\mu(x)< \infty.
\end{align}
Then $\widetilde{\Psi}_g\in L^1(\Conf(E), \PP_\Pi).$ If the subspace $\sqrt{g}L$ is closed and the corresponding operator of orthogonal projection $\Pi^g$ satisfies, for sufficiently large $R>0$, the condition $\tr(\chi_{\{g > R\}} \Pi^g \chi_{\{g>R\}}) <\infty$,
then we also have $\PP_{\Pi^g} = \overline{\Psi}_{g}^{\Pi} \cdot \PP_{\Pi}$. 
\end{prop} 

\begin{rem} Proposition 4.2 in \cite{BQS} is formulated in slightly greater generality: namely, it still holds if 
$g$ is allowed to take $0$ values on a set $E_0\subset E$ of positive measure , provided that the subset $E_0$ satisfies $\tr(\chi_{E_0} \Pi\chi_{E_0})<\infty$ and that a function $\varphi \in L$ such that $\chi_{E\setminus E_0} \varphi =0$ must be the zero function. This more general formulation is needed in \cite{BQS} in order to cover the case of the discrete phase space when even a finite set of zeros of our function $g$ has positive measure and the requirement states, informally speaking, that no function from $L$ may be supported on a finite set.  
In the continuous case, there is no need for the set $E_0$. At the same time,  Theorem \ref{thm-C} also admits 
a similar  more general version: Theorem \ref{thm-C} still holds if 
$g$ is allowed to take zero values on a set $E_0\subset E$ of positive measure, provided that the subset $E_0$ satisfis $\tr(\chi_{E_0} \Pi\chi_{E_0})<\infty$ and that a function $\varphi \in L$ such that $\chi_{E\setminus E_0} \varphi =0$ must be the zero function.
 \end{rem}

Assumptions of  Theorem \ref{thm-C} are indeed weaker than that of  Proposition \ref{prop-BQS}: under the assumption of Proposition \ref{prop-BQS}, the subsets $E_n  = \{x \in E: | g(x) -1| \ge 1/n\}$ verify all the assumptions of Theorem \ref{thm-C}. Indeed, we have 
\begin{align*}
 \intt\limits_{ E_n^c} | g(x)-1|^3 \Pi(x,x)d\mu(x)  \le \frac{1}{n}  \intt\limits_{ E_n^c} | g(x)-1|^2 \Pi(x,x)d\mu(x)< \infty;
\end{align*}
\begin{align*}
& \iint\limits_{E_n^c \times E_n^c} | g(x) - g(y) |^2 | \Pi(x,y)|^2 d\mu(x)d\mu(y) 
\\
& \le  2 \iint\limits_{E_n^c \times E_n^c} ( | g(x) -1|^2 + |1- g(y) |^2) | \Pi(x,y)|^2 d\mu(x)d\mu(y)   
\\
& \le 4   \int\limits_{ E_n^c} | g(x) -1|^2  \Pi(x,x) d\mu(x)    < \infty,
\end{align*}
while,  by Remark \ref{HS-integral}, 
 \begin{align*}
&\tr (\chi_{E_n} \Pi | g-1|^2 \chi_{E_n^c} \Pi  \chi_{E_n}   )  = \int_{E_n} d\mu(y) \int_{E_n^c} | g(x)-1|^2 | \Pi(x,y)|^2 d\mu(x)
\\
& \le   \int\limits_{ E_n^c} | g(x) -1|^2  \Pi(x,x) d\mu(x) \xrightarrow{n\to\infty} 0.
\end{align*}

\subsection{Outline of the proof of Theorem \ref{thm-C}}

The results in \cite{Buf-multi} and \cite{Buf-inf}
state that if $K\in \mathscr{S}_{1, \loc}(E, \mu)$ defines a determinantal measure $\PP_K$ on $\Conf(E)$ and $g$ is a non-negative bounded measurable function on $E$ such that $\sqrt{|g - 1|} K\sqrt{|g-1|} \in \mathscr{S}_{1}(E, \mu) $ and $1 + (g-1)K$ is invertible, then  the operator 
$$
K^g: = \sqrt{g} K (1 + (g-1) K)^{-1} \sqrt{g} 
$$
induces a determinantal measure $\PP_{K^g}$ on $\Conf(E)$ that coincides with 
$$
\frac{\Psi_g \PP_K}{\displaystyle{\int_{\Conf(E)}} \Psi_g d\PP_K }. 
$$
In other words, a product of a determinantal measure and a multiplicative functional is again 
a determinantal measure given by an explicitly found kernel.
In particular, if $K$ is an orthogonal projection onto a subspace $L\subset L^2(E, \mu)$, then $K^g$ is the orthogonal projection onto the closure of the subspace $\sqrt{g} L$. 

Establishing the equivalence of Palm measures is, however, reduced to proving the equivalence of determinantal point processes $\PP_K$ and $\PP_{K^g}$ when the multiplicative functional $\Psi_g$ is either 
not convergent at all or not integrable with respect to $\PP_K$. We therefore need the formalism of regularized multiplicative functionals in order to establish the desired equivalence. 
 
Proposition 4.2 in \cite{BQS} uses the Hilbert-Carleman regularization of the  Fredholm determinant defined for all
Hilbert-Schmidt operators: $\det_2(1+A)=\det(1+A)\exp(-\tr(A))$. Assumption (\ref{decay-Buf}) precisely ensures that the operator  $\sqrt{|g - 1|} K\sqrt{|g-1|}$ is Hilbert-Schmidt. Unfortunately, this assumption does not hold for reproducing kernels of Hilbert spaces of holomorphic functions, and instead of Hilbert-Schmidt operators we must work with the von Neumann-Schatten class $\mathscr{S}_3$.
Assumption (\ref{decay}) in Theorem \ref{thm-C} ensures the relation  $\sqrt{|g - 1|} K\sqrt{|g-1|}\in\mathscr{S}_3$.
The main step in the proof of Theorem \ref{thm-C} is the extension of the definition of regularized multiplicative functional to this larger class of  functions $g$. The main technical step in the proof is Proposition \ref{technical-prop}.

\section{Case of $\C$}\label{sec-case-fock}
\subsection{Examples}
In this section,  we assume  that $\psi: \C\rightarrow \R$ is a measurable function on $\C$, the condition \eqref{sub-h} is not necessarily satisfied. Recall that we denote $dv_\psi (z) = e^{-2 \psi(z)} d\lambda(z)$ and denote
$\mathscr{F}_\psi  = \left\{f: \C \rightarrow \C\Big| \text{$f$ holomorphic, $\int\limits_\C | f |^2 dv_\psi< \infty$}\right\}.$
If the evaluation functionals $\ev_z (f): =  f(z)$ defined on $\mathscr{F}_\psi$ are uniformly bounded on compact subsets, then $\mathscr{F}_\psi$ is a closed subspace of $L^2(\C, dv_\psi)$. In this case, denote by $B_\psi$ the reproducing kernel of $\mathscr{F}_\psi$, we have 
\begin{align}\label{rep-kernel}
B_\psi(z,w)= \sum_{j=1}^\infty f_j(z) \overline{f_j(w)},
\end{align}
where $(f_j)_{j =1}^\infty$ is any orthonormal basis of $\mathscr{F}_\psi$.

\begin{ass}\label{ass-fock}
The measure $dv_\psi$ satisfies
\begin{itemize}
\item[(1)] the evaluation functionals $\ev_z$ defined on $\mathscr{F}_\psi$ are uniformly bounded on compact subsets; 
\item[(2)] the polynomials are dense in $\mathscr{F}_\psi$;
\item[(3)] $\displaystyle{\int_\C}  \frac{1}{1 + | z|^2}   B_\psi(z,z) dv_\psi(z) < \infty$.
\end{itemize} 
\end{ass}

\begin{exam}[A radial case]\label{ex1}
Let $ \alpha >0$, and set $\psi_\alpha(z) = \frac{1}{2}|z|^{\alpha}$. The measure $dv_{\psi_\alpha} (z) = e^{- |z|^{\alpha}} d\lambda(z)$ satisfies Assumption \ref{ass-fock} if and only if $0 < \alpha < 2$.  Indeed, the first two conditions in Assumption \ref{ass-fock} are satisfied by $dv_{\psi_\alpha}$ by all $\alpha >0$. Now one can see that the third condtion is equivalent to 
\begin{align}\label{norm-series}
\sum_{n =1}^\infty \frac{\| z^{n-1}\|_{L^2(dv_\psi)}^2}{\| z^n\|_{L^2(dv_\psi)}^2} <\infty.
\end{align}
A direct computation shows that
\begin{align}\label{monomials-norm}
 \| z^n\|_{L^2(dv_\psi)}^2 = \frac{2\pi}{\alpha} \Gamma\left(\frac{2n+2}{\alpha}\right) \et  \frac{\| z^{n-1}\|_{L^2(dv_\psi)}^2}{\| z^n\|_{L^2(v_\psi)}^2} \sim  \frac{1}{n^{2/\alpha}}.
\end{align}
The series \eqref{norm-series} converges if and only if  $0< \alpha <2$.
\end{exam}

\begin{rem}\label{rem-reason}
As shown in Example \ref{ex1},  the third condition in Assumption \ref{ass-fock} is too strict: indeed, it fails already for the Ginibre point process (corresponding to $\psi(z)  = \frac{1}{2}|z|^2$). 
\end{rem}

\medskip

Let $\PP_{B_\psi}$ be the determinantal point process induced by the operator $B_\psi$. For any $\ell$-tuple $\mathfrak{q} = (q_1, \dots, q_\ell) \in \C^\ell$ of distinct points, set 
$$
\mathscr{F}_\psi(\mathfrak{q})  : =\left \{ f \in \mathscr{F}_\psi\Big| f (q_1) = \dots = f(q_\ell)  =0 \right\},
$$
and let $B_\psi^{\mathfrak{q}}$ denote the operator of orthogonal projection onto $\mathscr{F}_\psi (\mathfrak{q})$.  Recall that the Palm distribution $\PP_{B_\psi}^{\mathfrak{q}}$ of $\PP_{B_\psi}$ conditioned at $\mathfrak{q}$ is induced by $B_\psi^{\mathfrak{q}}$, i.e., 
$$
\PP_{B_\psi}^{\mathfrak{q}} = \PP_{B_\psi^{\mathfrak{q}}}.
$$

Given a positive integer $\ell \in \N$,  introduce the closed subspace 
\begin{align}\label{symmetric}
\mathscr{F}_\psi^{(\ell)} : = \left\{ f \in \mathscr{F}_\psi \Big | f(0) = f'(0) = \dots = f^{(\ell- 1)} (0) = 0 \right\}.
\end{align}
Denote  $B_\psi^{(\ell)}$ the operator of orthogonal projection onto $\mathscr{F}_\psi^{(\ell)}$. Let $\PP_{B_\psi}^{(\ell)}$ be the determinantal point process induced by $B_\psi^{(\ell)}$.

\begin{rem}
In general, we do not have $\mathscr{F}_\psi^{(\ell)} = z^\ell \mathscr{F}_\psi.$ Indeed, let $\psi(z) = \half |z|^2$,  we have $z \mathscr{F}_\psi \not\subset \mathscr{F}_\psi$. This can be seen from the closed graph theorem: otherwise, the operator $M_z: \mathscr{F}_\psi \rightarrow \mathscr{F}_\psi$ of multiplication by the function $z$ is bounded, which contradicts the explicit computation \eqref{monomials-norm}:
$$
\n M_z\n_{\mathscr{F}_\psi \rightarrow \mathscr{F}_\psi} \ge \sup_{n} \frac{\n z^{n+1}\n_{\mathscr{F}_\psi}}{\n z^n\n_{\mathscr{F}_\psi}} = \infty;
$$
see also the related discussion after Theorem 2 in \cite{Ferguson}.
\end{rem}

\begin{prop}\label{prop-fock}
If $\psi$ satisfies Assumption \ref{ass-fock}, then for any $\ell\in \N$  and  any $\ell$-tuple $\mathfrak{q} = (q_1, \dots, q_\ell) \in \C^\ell$ of distinct points,  we have equivalence of measures: 
$$
\PP_{B_\psi}^{\mathfrak{q}} \simeq \PP_{B_\psi}^{(\ell)}.
$$
Moreover, if one sets 
$$
g_{\mathfrak{q}}(z) =  \left|   \frac{(z-q_1)\dots (z - q_\ell)}{z^\ell}  \right|^2, 
$$
then the Radon-Nikodym derivative is given by the regularized multiplicative functional 
$$
\frac{d\PP_{B_\psi}^{\mathfrak{q}}}{d \PP_{B_\psi}^{(\ell)}} = \overline{\Psi}_{g_\mathfrak{q}}^{B_\psi^{(\ell)}}.
$$
In particular, given any two $\ell$-tuples $\mathfrak{q}$ and $\mathfrak{q}'$ of distinct points, the corresponding Palm measures $\PP_{B_\psi}^{\mathfrak{q}}$ and $\PP_{B_\psi}^{\mathfrak{q}'}$ are equivalent.
\end{prop}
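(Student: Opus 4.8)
The plan is to realise the Palm subspace $\mathscr{F}_\psi(\mathfrak{q})$ as a multiplicative perturbation of the fixed reference subspace $\mathscr{F}_\psi^{(\ell)}$ and then to apply Theorem \ref{thm-C} with $\Pi = B_\psi^{(\ell)}$ and $g = g_{\mathfrak{q}}$. Write $m_{\mathfrak{q}}(z) = \frac{(z-q_1)\cdots(z-q_\ell)}{z^\ell}$, so that $g_{\mathfrak{q}} = |m_{\mathfrak{q}}|^2$ and $|m_{\mathfrak{q}}(z)| \to 1$ as $|z| \to \infty$. The first step is the algebraic identity
$$
\mathscr{F}_\psi(\mathfrak{q}) = m_{\mathfrak{q}} \cdot \mathscr{F}_\psi^{(\ell)}
$$
as subspaces of $L^2(\C, dv_\psi)$: multiplication by $m_{\mathfrak{q}}$ carries a holomorphic function vanishing to order $\ell$ at $0$ to a holomorphic function vanishing at $q_1, \dots, q_\ell$, and since $|m_{\mathfrak{q}}|$ is bounded above and below on the complement of fixed neighbourhoods of $0$ and of the $q_j$, while near $0$ the blow-up of $m_{\mathfrak{q}}$ is exactly compensated by the factor $z^\ell$ carried by elements of $\mathscr{F}_\psi^{(\ell)}$, this map preserves square-integrability in both directions. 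Both subspaces are closed by Assumption \ref{ass-fock}(1) (evaluations and their derivatives at fixed points being bounded functionals).

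Next, split $m_{\mathfrak{q}} = |m_{\mathfrak{q}}|\,\alpha$ with $\alpha = m_{\mathfrak{q}}/|m_{\mathfrak{q}}|$ unimodular $\mu$-a.e.\ (the set where it is undefined, namely $\{0, q_1,\dots,q_\ell\}$, being $\mu$-negligible). Then $\sqrt{g_{\mathfrak{q}}}\,\mathscr{F}_\psi^{(\ell)} = |m_{\mathfrak{q}}|\,\mathscr{F}_\psi^{(\ell)} = \bar\alpha\, m_{\mathfrak{q}}\mathscr{F}_\psi^{(\ell)} = \bar\alpha\, \mathscr{F}_\psi(\mathfrak{q})$, which is closed, being the image of the closed subspace $\mathscr{F}_\psi(\mathfrak{q})$ under the unitary operator of multiplication by $\bar\alpha$. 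By Remark \ref{rem-unit}, the projection $\Pi^{g_{\mathfrak{q}}}$ onto $\sqrt{g_{\mathfrak{q}}}\,\mathscr{F}_\psi^{(\ell)}$ and the projection $B_\psi^{\mathfrak{q}}$ onto $\mathscr{F}_\psi(\mathfrak{q})$ induce the same determinantal measure; combined with Shirai--Takahashi (Theorem \ref{ST}) this gives $\PP_{\Pi^{g_{\mathfrak{q}}}} = \PP_{B_\psi^{\mathfrak{q}}} = \PP_{B_\psi}^{\mathfrak{q}}$, and likewise $\PP_{B_\psi^{(\ell)}} = \PP_{B_\psi}^{(\ell)}$. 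Hence it suffices to apply Theorem \ref{thm-C} to $\Pi = B_\psi^{(\ell)}$, $g = g_{\mathfrak{q}}$: it will yield $\widetilde{\Psi}_{g_{\mathfrak{q}}} \in L^1(\PP_{B_\psi^{(\ell)}})$ and $\PP_{\Pi^{g_{\mathfrak{q}}}} = \overline{\Psi}_{g_{\mathfrak{q}}}^{B_\psi^{(\ell)}} \cdot \PP_{B_\psi^{(\ell)}}$, which is the desired Radon--Nikodym identity.

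It remains to verify the hypotheses of Theorem \ref{thm-C} with $E = \C$, $E_0 = \{q_1,\dots,q_\ell\}$ (the zero set of $g_{\mathfrak{q}}$; one may set $g_{\mathfrak{q}}$ to be $1$ at $0$, a $\mu$-null point, if $0 \notin E_0$), and the exhaustion $E_n = \{|z|\le n\}$ for $n$ large. The conditions on $E_0$ hold since it is $\mu$-negligible and no nonzero element of $\mathscr{F}_\psi^{(\ell)}$ can be supported on it. For \eqref{singularity} one needs $|g_{\mathfrak{q}}-1|\,B_\psi^{(\ell)}(z,z)$ to be locally integrable; away from $0$ this is clear, and near $0$ one uses that every $f \in \mathscr{F}_\psi^{(\ell)}$ with $\|f\|\le 1$ has the form $z^\ell h$ with $|h|$ uniformly bounded on a fixed disc about $0$ (Cauchy's integral formula on a fixed circle, together with local boundedness of $B_\psi(\zeta,\zeta)$ from Assumption \ref{ass-fock}(1)), whence $B_\psi^{(\ell)}(z,z) = O(|z|^{2\ell})$ as $z\to 0$, which absorbs the $O(|z|^{-2\ell})$ singularity of $g_{\mathfrak{q}}$. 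For \eqref{decay} and \eqref{variance} one uses $g_{\mathfrak{q}}(z)-1 = |m_{\mathfrak{q}}(z)|^2 - 1 = O(1/|z|)$ as $|z|\to\infty$, so that $|g_{\mathfrak{q}}-1|^3 \lesssim (1+|z|^2)^{-1}$ and $|g_{\mathfrak{q}}-1|^2 \lesssim (1+|z|^2)^{-1}$ for $|z|$ large, together with $B_\psi^{(\ell)}(z,z)\le B_\psi(z,z)$, the identity $\int_\C |\Pi(x,y)|^2 d\mu(y) = \Pi(x,x)$ and $|g_{\mathfrak{q}}(x)-g_{\mathfrak{q}}(y)|^2 \le 2|g_{\mathfrak{q}}(x)-1|^2 + 2|g_{\mathfrak{q}}(y)-1|^2$; all the resulting integrals are dominated by a constant times $\int_\C (1+|z|^2)^{-1} B_\psi(z,z)\,dv_\psi(z)$, finite by Assumption \ref{ass-fock}(3). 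Condition \eqref{flat-cut} follows from Remark \ref{HS-integral} and the bound $\tr(\chi_{E_n}\Pi|g_{\mathfrak{q}}-1|^2\chi_{E_n^c}\Pi\chi_{E_n}) \le \int_{E_n^c}|g_{\mathfrak{q}}-1|^2\Pi(x,x)\,d\mu(x)$, which tends to $0$ by dominated convergence. Finally, \eqref{extra} holds because for $R$ large $\{g_{\mathfrak{q}} > R\}$ is a bounded neighbourhood of $0$, on which $\Pi^{g_{\mathfrak{q}}}(x,x) = B_\psi^{\mathfrak{q}}(x,x)\le B_\psi(x,x)$ is $dv_\psi$-integrable. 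This gives $\frac{d\PP_{B_\psi}^{\mathfrak{q}}}{d\PP_{B_\psi}^{(\ell)}} = \overline{\Psi}_{g_{\mathfrak{q}}}^{B_\psi^{(\ell)}}$; the equivalence of $\PP_{B_\psi}^{\mathfrak{q}}$ and $\PP_{B_\psi}^{\mathfrak{q}'}$ for two $\ell$-tuples then follows by transitivity through the common reference measure $\PP_{B_\psi}^{(\ell)}$.

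The main obstacle is controlling the two critical scales at once. Near $z=0$ one must exploit that $\mathscr{F}_\psi^{(\ell)}$ consists of functions divisible by $z^\ell$ in order to see that $B_\psi^{(\ell)}(z,z)$ vanishes to order precisely $2\ell$, thereby neutralising the pole of $g_{\mathfrak{q}}$ in \eqref{singularity} and in \eqref{extra}. Near $z=\infty$ one must know that the perturbation $g_{\mathfrak{q}}-1$ decays only like $1/|z|$, which is just barely enough --- once the cubic power in \eqref{decay} (rather than the square) is used --- to be absorbed by the weak integrability hypothesis Assumption \ref{ass-fock}(3); this is precisely the point where the strengthening of Proposition~4.6 of \cite{BQS} embodied in Theorem \ref{thm-C} is needed.
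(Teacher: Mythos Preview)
Your argument is correct, but it takes a heavier route than the paper's. The paper observes that under Assumption~\ref{ass-fock}(3) one already has
\[
\int_{|z|\ge 1} |g_{\mathfrak{q}}(z)-1|^{2}\, B_\psi^{(\ell)}(z,z)\,dv_\psi(z) < \infty,
\]
since $|g_{\mathfrak{q}}(z)-1|^2 = O(1/|z|^2)$ and $B_\psi^{(\ell)}(z,z)\le B_\psi(z,z)$; together with the local bound $B_\psi^{(\ell)}(z,z)=O(|z|^{2\ell})$ near the origin, this puts $g_{\mathfrak{q}}$ into the class $\mathscr{A}_2(B_\psi^{(\ell)})$ and Proposition~4.6 of \cite{BQS} applies directly. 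You instead invoke Theorem~\ref{thm-C} and verify the longer list of hypotheses \eqref{singularity}--\eqref{extra}; in doing so you in fact prove the $\mathscr{A}_2$ condition along the way (your bound for \eqref{variance} via $|g(x)-g(y)|^2 \le 2|g(x)-1|^2 + 2|g(y)-1|^2$ and the reproducing identity is exactly $\int |g-1|^2\Pi(x,x)\,d\mu<\infty$).

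Your closing remark is therefore the wrong way round: the strengthening from $|g-1|^2$ to $|g-1|^3$ in Theorem~\ref{thm-C} is \emph{not} needed for Proposition~\ref{prop-fock}; that is precisely why this proposition is placed in the ``Examples'' subsection under the extra hypothesis Assumption~\ref{ass-fock}(3). Theorem~\ref{thm-C} becomes essential only for Theorem~\ref{thm-A2}, where $\psi$ satisfies \eqref{sub-h} but not necessarily Assumption~\ref{ass-fock}(3) (cf.\ Remark~\ref{rem-reason}: for the Ginibre case $\psi(z)=\tfrac12|z|^2$ the integral $\int (1+|z|^2)^{-1}B_\psi(z,z)\,dv_\psi$ diverges).
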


\begin{proof}
First note that, under Assumption \ref{ass-fock}, for any $\ell\in \N$  and  any $\ell$-tuple $\mathfrak{q} = (q_1, \dots, q_\ell) \in \C^\ell$ of distinct points, 
$$
\mathscr{F}_\psi (\mathfrak{q}) = \frac{(z-q_1)\dots (z - q_\ell)}{z^\ell} \mathscr{F}_\psi^{(\ell)}.
$$
Indeed, if $f\in \mathscr{F}_\psi^{(\ell)}$, then the function $h(z): =  \frac{(z-q_1)\dots (z - q_\ell)}{z^\ell} f(z)$ is holomorphic on $\C$ and vanishes at $q_1, \cdots, q_\ell$. Moreover, 
\begin{align*}
 \int\limits_\C | h  |^2 dv_\psi 
& =  \int_{\D} | h |^2 dv_\psi   +  \int_{\C\setminus \D} |  h  |^2 dv_\psi 
 \\
 &\le  v_\psi (\D) \cdot  \sup_{z\in \D} | h(z) |^2 + \sup_{z\in \C\setminus \D}  \left| \frac{(z-q_1)\dots (z - q_\ell)}{z^\ell}\right|^2 \int_{\C\setminus \D} |  f |^2 dv_\psi    < \infty.
\end{align*}
Hence we get $h \in  \mathscr{F}_\psi (\mathfrak{q})$.  Conversely, if $h \in  \mathscr{F}_\psi (\mathfrak{q})$, then similar proof as above shows that $f(z) :  = \frac{z^\ell} {(z-q_1)\dots (z - q_\ell)}h(z) $ is a function in  $\mathscr{F}_\psi^{(\ell)}$.

By the elementary fact from Remark \ref{rem-unit}, the operator of orthogonal projections from $L^2(\C, v_\psi)$ onto the following two subspaces 
\begin{align*}
 \frac{(z-q_1)\dots (z - q_\ell)}{z^\ell} \mathscr{F}_\psi^{(\ell)}    \text{ and }  \left| \frac{(z-q_1)\dots (z - q_\ell)}{z^\ell}\right| \mathscr{F}_\psi^{(\ell)}  = \sqrt{g_{\mathfrak{q}}} \cdot \mathscr{F}_\psi^{(\ell)}
\end{align*}
induce the same determinantal point process. Consequently,  for finishing the proof of Proposition \ref{prop-fock}, it suffices to  verify that the pair $(B_\psi^{(\ell)}, g_{\mathfrak{q}})$ satisfies all the assumptions of Proposition \ref{prop-BQS}. Note that by representing $B_\psi^{(\ell)}$ in similar  form as \eqref{rep-kernel}, we have $ B_\psi^{(\ell)}(z,z)  = O(  |z|^{2\ell})$ for $|z| \to 0$. Hence there exists $C> 0$, such that
\begin{align*}
& \intt_{  | z| \le 1 } | g_{\mathfrak{q}}(z) - 1| B_\psi^{(\ell)}(z,z) dv_\psi(z)    \le  C \cdot \sup_{| z| \le 1}  ( | g_{\mathfrak{q}}(z) - 1|\cdot |z|^{2\ell} ) \cdot  \intt_{  | z| \le 1 } dv_\psi(z)   < \infty. 
\end{align*}
On the other hand, $ |g_{\mathfrak{q}}(z) -1 |^2  =  O\left( 1/|z|^2  \right)$ as $| z | \to \infty$. Recalling that $B_\psi^{(\ell)}(z,z) \le B_\psi(z,z)$, we have
\begin{align*}
 \intt_{ | z | \ge 1 } | g_{\mathfrak{q}}(z) - 1|^2B_\psi^{(\ell)}(z,z) dv_\psi(z) \le  \sup_{| z|\ge 1} (|z|^2 | g_{\mathfrak{q}}(z)- 1|^2 )    \cdot \intt_{ | z | \ge 1 } \frac{1}{| z|^2}B_\psi (z,z) dv_\psi(z) .
\end{align*}
By the third condition in Assumption \ref{ass-fock}, we may conclude that  the above integral is finite. Since $\mathscr{F}_\psi (\mathfrak{q})$ is a closed subspace in $L^2(\C, v_\psi)$, so is  $\sqrt{g_{\mathfrak{q}}} \cdot \mathscr{F}_\psi^{(\ell)}$.  Moreover,  there exists a function $\alpha: \C \rightarrow \C$ such that $|\alpha(z) | =1$ and the orthogonal projection from $L^2(\C, v_\psi)$ onto the subspace $\sqrt{g_{\mathfrak{q}}} \cdot \mathscr{F}_\psi^{(\ell)}$  is given by
\begin{align*}
 [B_\psi^{(\ell)}]^{g_{\mathfrak{q}}} = \alpha \cdot B_\psi^{\mathfrak{q}} \cdot \overline{\alpha}. 
 \end{align*} 
 It follows that, for sufficiently large $R>0$, since the set $\{z \in \C: g_{\mathfrak{q}} (z) > R\}$ is bounded, we have 
 \begin{align*}
\tr(\chi_{\{ g_{\mathfrak{q}}  > R\} }  [B_\psi^{(\ell)}]^{g_{\mathfrak{q}}}   \chi_{\{ g_{\mathfrak{q}} >R\} })     = \tr(\chi_{ \{ g_{\mathfrak{q}}  > R\} }   \alpha \cdot B_\psi^{\mathfrak{q}} \cdot \overline{\alpha}   \chi_{\{ g_{\mathfrak{q}} >R \} })        = \int_{\{g_{\mathfrak{q}}  > R \}}   B_\psi^{\mathfrak{q}}  (z, z) d v_\psi(z)< \infty. 
 \end{align*}

The proof of Proposition \ref{prop-fock} is complete. 
\end{proof}

\subsection{Proof of Theorem \ref{thm-A2}}\label{derivation-fock}
We now derive Theorem \ref{thm-A2} from Theorem \ref{thm-C}. From now on, the function $\psi$ is assumed to satisfy the condition \eqref{sub-h} until the end of this paper. 

Let $\ell\ge 1$  and let $\mathfrak{p} = (p_1, \dots, p_\ell) , \mathfrak{q} = (q_1, \dots, q_\ell) \in \C^\ell$ be any  two fixed $\ell$-tuples of distinct points; let $g$ be the function defined by the formula
\begin{align}\label{def-m-g}
g(z) = |g_{\mathfrak{p}, \mathfrak{q}}(z)|^2 = \left| \frac{(z-p_1) \cdots (z-p_\ell)}{ (z - q_1) \cdots (z-q_\ell)} \right|^2.
\end{align}
 Let $0< \varepsilon<1$ be a small fixed number.  Choose 
 $$R_\varepsilon > \max\{ |p_k|, | q_k|: k = 1, \dots, \ell\}$$ large enough,  such that outside the following subset $$A_\varepsilon= \{z \in \C: | z | \le R_\varepsilon \},$$ we have $| g(z) - 1| \le \varepsilon$. Finally, for $n\in \N$, let $$E_n = \{ z\in \C: | z| \le \max(R_\varepsilon, n)\}.$$

We start with a simple but very useful observation that conditions  \eqref{decay}, \eqref{variance}, \eqref{flat-cut} and \eqref{extra}  in Theorem \ref{thm-C} are preserved under taking finite rank pertubation.

\begin{rem}\label{p-rem}
Assume that the pair $(g, \Pi)$ satisfies the conditions  \eqref{decay}, \eqref{variance}, \eqref{flat-cut} and \eqref{extra}  in Theorem \ref{thm-C}.    If $\widetilde{\Pi}  = \Pi + \Pi'$, where $\Pi'$ has finite rank and $\Ran(\Pi) \perp \Ran(\Pi')$, or  $ \widetilde{\Pi} = \Pi - \Pi'$, where $\Pi'$ has finite rank and $\Ran(\Pi') \subset \Ran(\Pi)$, then conditions  \eqref{decay}, \eqref{variance}, \eqref{flat-cut} and \eqref{extra} hold for the new pair $(g, \widetilde{\Pi})$ . If $g$ is unbounded, then the condition \eqref{singularity} for the pair $(g, \Pi)$ does not imply the condition for the pair $(g, \widetilde{\Pi})$. The condition \eqref{singularity} is on the other hand usually easy to check directly.
\end{rem}

\begin{lem}\label{lem-signularity-decay}
Let $g$ be the function defined by the formula \eqref{def-m-g} and let $E_n$. We have 
$$ 
\intt\limits_{ E_n} | g(z)-1| B_\psi^{\mathfrak{q}}(z,z) e^{-2\psi(z)}d\lambda(z)< \infty;
 $$
 $$
 \intt\limits_{ E_n^c} | g(z)-1|^3 B_\psi^{\mathfrak{q}}(z,z)e^{-2\psi(z)}d\lambda(z)< \infty.
$$
\end{lem}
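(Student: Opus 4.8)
The plan is to prove both integrability statements by reducing them to the pointwise diagonal estimate $B_\psi^{\mathfrak{q}}(z,z)e^{-2\psi(z)} \le B_\psi(z,z)e^{-2\psi(z)} \le C$, which is available from Christ's estimate \eqref{d-es} combined with the fact that $B_\psi^{\mathfrak{q}}$ is the kernel of an orthogonal projection onto a subspace of $\mathscr{F}_\psi$, so that $B_\psi^{\mathfrak{q}}(z,z) = \|v_z^{\mathfrak{q}}\|^2 \le \|v_z\|^2 = B_\psi(z,z)$. Thus both integrals are dominated, up to the constant $C$, by Lebesgue integrals of $|g(z)-1|$ (resp. $|g(z)-1|^3$) over $E_n$ (resp. $E_n^c$), and it remains only to check the behaviour of $g-1$ near the poles $q_1,\dots,q_\ell$ and at infinity.

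First I would record the two elementary asymptotics for $g$ defined by \eqref{def-m-g}. Near each pole $q_j$, the rational function $g_{\mathfrak{p},\mathfrak{q}}(z) = (z-p_1)\cdots(z-p_\ell)/((z-q_1)\cdots(z-q_\ell))$ has a simple pole, so $|g(z)-1| = |g_{\mathfrak{p},\mathfrak{q}}(z)|^2 + O(|g_{\mathfrak{p},\mathfrak{q}}(z)|)$ behaves like $|z-q_j|^{-2}$ as $z\to q_j$; since $\int_{|z-q_j|\le 1}|z-q_j|^{-2}\,d\lambda(z)$ diverges, I cannot simply integrate $|g-1|$ against a bounded density over a full neighbourhood of $q_j$. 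This is the one genuinely delicate point. The resolution is that $\mathfrak{q} = (q_1,\dots,q_\ell)\in\widetilde{E}^\ell$, so each $v_{q_j}$ lies in $\mathscr{F}_\psi$ and every element of the Palm subspace $\mathscr{F}_\psi(\mathfrak{q})$ vanishes at $q_j$; consequently, for $\varphi\in\mathscr{F}_\psi(\mathfrak{q})$ the function $\varphi(z)/(z-q_j)$ is still holomorphic, and one gets the sharper local bound $B_\psi^{\mathfrak{q}}(z,z) = O(|z-q_1|^2\cdots|z-q_\ell|^2)$ as $z\to q_j$. Combined with $|g(z)-1| = O(|z-q_1|^{-2}\cdots|z-q_\ell|^{-2})$, the product $|g(z)-1|\,B_\psi^{\mathfrak{q}}(z,z)e^{-2\psi(z)}$ is bounded near each $q_j$, so the first integrand is bounded on all of $\C$ and in particular locally integrable; since $E_n$ is bounded, $\int_{E_n}|g(z)-1|B_\psi^{\mathfrak{q}}(z,z)e^{-2\psi(z)}\,d\lambda(z)<\infty$ follows at once. (Alternatively, and more in the spirit of Remark \ref{p-rem}, one observes that $B_\psi^{\mathfrak{q}}$ differs from $B_\psi^{(\ell)}$ by a finite-rank piece and that for $B_\psi^{(\ell)}$ the local vanishing at $0$ cancels the poles after the change of variables $g_{\mathfrak{q}}(z) = (z-q_1)\cdots(z-q_\ell)/z^\ell$; the first integral condition \eqref{singularity} is exactly the one that must be checked directly anyway.)

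For the second integral, over $E_n^c = \{|z|>\max(R_\varepsilon,n)\}$, I would use the other half of \eqref{convergence-rate}: $|g_{\mathfrak{p},\mathfrak{q}}(z)|\to 1$ as $|z|\to\infty$, and more precisely $g_{\mathfrak{p},\mathfrak{q}}(z) = 1 + (\sum p_j - \sum q_j)/z + O(|z|^{-2})$, whence $g(z) - 1 = O(|z|^{-1})$ and therefore $|g(z)-1|^3 = O(|z|^{-3})$ as $|z|\to\infty$. Since $B_\psi^{\mathfrak{q}}(z,z)e^{-2\psi(z)}\le C$ by Christ's estimate, the integrand is $O(|z|^{-3})$ on $E_n^c$, and $\int_{|z|>r_0}|z|^{-3}\,d\lambda(z) = 2\pi\int_{r_0}^\infty r^{-2}\,dr < \infty$; hence $\int_{E_n^c}|g(z)-1|^3 B_\psi^{\mathfrak{q}}(z,z)e^{-2\psi(z)}\,d\lambda(z)<\infty$. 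The main obstacle is purely the first integral's behaviour at the poles $q_j$, which is why I would present the argument that the vanishing of functions in $\mathscr{F}_\psi(\mathfrak{q})$ at $\mathfrak{q}$ forces the quadratic vanishing of $B_\psi^{\mathfrak{q}}(z,z)$ there; everything else is the trivial domination by Christ's bound plus the two rational-function asymptotics from \eqref{convergence-rate}.
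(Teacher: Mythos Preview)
Your proposal is correct and follows essentially the same route as the paper. For the first integral, both you and the paper exploit the quadratic vanishing $B_\psi^{\mathfrak{q}}(z,z)=O(|z-q_k|^2)$ near each $q_k$ (coming from the fact that every element of $\mathscr{F}_\psi(\mathfrak{q})$ vanishes at $q_k$) to cancel the $|z-q_k|^{-2}$ singularity of $|g-1|$, making the integrand bounded on the compact set $E_n$; for the second integral, both use the Christ bound $B_\psi^{\mathfrak{q}}(z,z)e^{-2\psi(z)}\le C$ together with $|g(z)-1|^3=O(|z|^{-3})$ at infinity.
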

\begin{proof}
We first note that for any $n\in\N$,  here exists $C_n  > 0$ such that 
\begin{align}\label{around-q}
  B_\psi^{\mathfrak{q}}(z,z) \le  C_n \cdot \prod_{k=1}^\ell |z-q_k|^2,  \text{ for any $z\in E_n$}. 
\end{align}
Indeed,  $B_\psi^{\mathfrak{q}}$ is  the reproducing kernel of the subspace $\mathscr{F}_\psi(\mathfrak{q})$, holomorphic in the first coordinate and anti-holomorphic in  the second coordinate.  Hence for any $w\in\C$, the function $z \mapsto B_\psi^{\mathfrak{q}}(z,w)$ belongs to $\mathscr{F}_\psi(\mathfrak{q})$, that is, it is holomorphic and vanishes at $q_1, \cdots, q_\ell$. Consequently, we may write 
\begin{align*}
B_\psi^{\mathfrak{q}}(z, w) = \prod_{k=1}^\ell (z-q_k) \cdot h(z,w). 
\end{align*} 
where $h(z,w)$ is  holomorphic in the first coordinate and anti-holomorphic in  the second coordinate. Since $B_\psi^{\mathfrak{q}}(z,w)$ is a Hermitian kernel, we may write further 
\begin{align}\label{kernel-palm}
B_\psi^{\mathfrak{q}}(z, w) = \prod_{k=1}^\ell (z-q_k) (\bar{w} -\bar{q_k}) \cdot t(z,w),
\end{align} 
where $t(z,w)$ is a continuous function,   holomorphic in the first coordinate and anti-holomorphic in  the second coordinate.  Taking $C_n = \sup_{z\in E_n} t(z,z)$, we get the desired inequality \eqref{around-q}.  Now we have
\begin{align*}
&\sup_{z\in E_n} |g(z)-1| \cdot B_\psi^{\mathfrak{q}}(z, z) 
\\
\le& C_n \sup_{z\in E_n}  \Big|| (z-p_1) \cdots (z-p_\ell) |^2 -| (z - q_1) \cdots (z-q_\ell)|^2\Big| <\infty,
\end{align*}
and the first inequality in the lemma follows immediately.

By Theorem \ref{Christ}, there exists a constant $C>0$, such that 
$$
B_\psi^{\mathfrak{q}}(z,z)e^{-2\psi(z)} \le B_\psi(z,z) e^{-2\psi(z)} \le C.
$$
Since $| g(z) - 1|^3 = O (1/| z|^3)\text{\,\,as\,\,} |z| \to \infty$, there exists $C'>0$, such that 
$$
\intt\limits_{ E_n^c} | g(z)-1|^3 B_\psi^{\mathfrak{q}}(z,z)e^{-2\psi(z)}d\lambda(z) \le C'  \int_{| z| \ge R_\varepsilon} \frac{1}{| z|^3} d\lambda(z)<\infty.
$$
\end{proof}

\begin{rem}\label{rem-alt}
An alternative proof of the inequality \eqref{around-q} is as follows. From the theory of reproducing kernel Hilbert spaces, we have 
\begin{align}\label{cal-diag}
 B_\psi^{\mathfrak{q}}(z,z)  = \sup_{f \in \mathscr{F}_\psi(\mathfrak{q}), \| f\|_{L^2(\C, v_\psi)} = 1} |f(z)|^2. 
\end{align}
By Closed Graph Theorem, the map $f \mapsto \frac{f(z) }{\prod_{k=1}^\ell (z  - q_k)}$ induces a bounded linear operator from $\mathscr{F}_\psi(\mathfrak{q})$ to $C(E_n)$, where $C(E_n)$ is the space of  continuous functions on the compact set $E_n$. Consequently, by denoting $C_n$ the operator norm of the above bounded linear operator, that is, 
\begin{align*}
C_n = \sup_{f \in \mathscr{F}_\psi(\mathfrak{q}), \| f\|_{L^2(\C, v_\psi)} = 1} \sup_{z\in E_n}  \left| \frac{f(z) }{\prod_{k=1}^\ell (z  - q_k)}\right|^2<\infty,
\end{align*}
and using \eqref{cal-diag}, we get the desired inequality \eqref{around-q}.
\end{rem}

\begin{lem}\label{lem-variance}
Let $g$ be the function defined by the formula \eqref{def-m-g}. We have 
 \begin{align}\label{ginibre-variance}
 \iint\limits_{A_\varepsilon^c \times A_\varepsilon^c} | g(z) - g(w) |^2 | B_\psi^{\mathfrak{q}}(z,w)|^2 dv_\psi(z)dv_\psi(w) < \infty.
 \end{align}
\end{lem}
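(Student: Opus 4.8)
The plan is to combine Christ's pointwise estimate (Theorem~\ref{Christ}) for the off-diagonal kernel with the precise asymptotics of $g$ at infinity. The first step is to note that the same exponential off-diagonal decay holds for the Palm kernel $B_\psi^{\mathfrak{p}}$. Iterating the Shirai--Takahashi formula \eqref{1-rank}, the kernel $B_\psi^{\mathfrak{p}}$ differs from $B_\psi$ by a rank-$\ell$ correction which is a finite linear combination of terms $B_\psi(z,p_i)\overline{B_\psi(w,p_j)}$; multiplying by $e^{-\psi(z)-\psi(w)}$, applying \eqref{pt-es} to each factor, and using $|z-p_i|+|w-p_j|\ge |z-w|-2\max_k|p_k|$, one obtains a constant $C'>0$ with
\[
|B_\psi^{\mathfrak{p}}(z,w)|^2\, e^{-2\psi(z)-2\psi(w)} \le C'\, e^{-\delta|z-w|}\qquad (z,w\in E_\varepsilon^c),
\]
where $\delta$ is the constant of Theorem~\ref{Christ}. (Alternatively, one could deduce \eqref{ginibre-variance} from the corresponding statement for $B_\psi$ via Remark~\ref{p-rem}.) Hence it suffices to show $\iint_{E_\varepsilon^c\times E_\varepsilon^c}|g(z)-g(w)|^2 e^{-\delta|z-w|}\,d\lambda(z)\,d\lambda(w)<\infty$.

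The key point — and the main obstacle — is that the crude bound $|g(z)-g(w)|\le |g(z)-1|+|g(w)-1|=O(|z|^{-1})+O(|w|^{-1})$ is not enough: it controls the double integral only by a constant times $\int_{E_\varepsilon^c}|z|^{-2}\,d\lambda(z)$, which diverges logarithmically. One must exploit the first-order cancellation between $g(z)$ and $g(w)$. From $\prod_j(z-p_j)\big/\prod_j(z-q_j)=1+a_1/z+O(|z|^{-2})$ with $a_1=\sum_j q_j-\sum_j p_j$, one gets $g(z)=1+2\,\mathrm{Re}(a_1/z)+O(|z|^{-2})$, hence, after enlarging $R_\varepsilon$ if necessary, for all $z,w\in E_\varepsilon^c$,
\[
|g(z)-g(w)| \le 2|a_1|\,\frac{|z-w|}{|z|\,|w|} + C\Big(\frac{1}{|z|^2}+\frac{1}{|w|^2}\Big),
\]
and therefore $|g(z)-g(w)|^2 \lesssim \dfrac{|z-w|^2}{|z|^2|w|^2}+\dfrac{1}{|z|^4}+\dfrac{1}{|w|^4}$.

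It then remains to bound the three resulting double integrals against $e^{-\delta|z-w|}$. For $\tfrac{1}{|z|^4}$ (resp. $\tfrac{1}{|w|^4}$) one integrates out $w$ (resp. $z$) first, using $\int_\C e^{-\delta|z-w|}\,d\lambda(w)=\mathrm{const}$, and is left with $\int_{|z|\ge R_\varepsilon}|z|^{-4}\,d\lambda(z)<\infty$. For the cross term, fix $z$, use $|z-w|^2 e^{-\delta|z-w|}\le C_\delta\,e^{-\delta|z-w|/2}$ together with the elementary bound $\int_{|w|\ge R_\varepsilon}\frac{e^{-c|z-w|}}{|w|^2}\,d\lambda(w)\le C|z|^{-2}$ for $|z|$ large (split the domain according to $|w-z|\le|z|/2$, to $|w-z|>|z|/2$ with $|w|\le 2|z|$, and to $|w|>2|z|$), reducing the cross term again to $\int_{|z|\ge R_\varepsilon}|z|^{-4}\,d\lambda(z)<\infty$. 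This establishes \eqref{ginibre-variance}.

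I expect the bookkeeping for the rank-$\ell$ correction of $B_\psi^{\mathfrak{p}}$ and the convolution-type estimates to be entirely routine; the one genuinely essential ingredient is the cancellation in $g(z)-g(w)$, which upgrades the pointwise decay from order $|z|^{-1}$ to order $|z|^{-2}$ and thereby makes the outer integration over $E_\varepsilon^c$ converge.
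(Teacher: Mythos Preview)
Your proof is correct and follows essentially the same strategy as the paper's: reduce to the unperturbed kernel $B_\psi$ (the paper invokes the finite-rank perturbation directly, which you list as an alternative), use Christ's estimate to replace the kernel by $e^{-\delta|z-w|}$, exploit the first-order expansion $g(z)=1+\alpha/z+\bar\alpha/\bar z+O(|z|^{-2})$ so that the leading part of $g(z)-g(w)$ is $\alpha(1/z-1/w)+\bar\alpha(1/\bar z-1/\bar w)$, and then estimate the resulting convolution-type integrals. The paper phrases the main reduction as ``it suffices to show $\iint |1/z-1/w|^2 e^{-\delta|z-w|}\,d\lambda\,d\lambda<\infty$'' and handles the remainder terms implicitly; you make the splitting into $\tfrac{|z-w|^2}{|z|^2|w|^2}$ and the two $|z|^{-4}$, $|w|^{-4}$ terms explicit, which is a bit cleaner. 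For the principal integral the paper substitutes $\zeta=z-w$ and splits according to $|w|\gtrless 2|\zeta|$, whereas you absorb $|z-w|^2$ into the exponential and prove the pointwise bound $\int_{|w|\ge R_\varepsilon}\frac{e^{-c|z-w|}}{|w|^2}\,d\lambda(w)\lesssim |z|^{-2}$ via a three-region decomposition; both are routine and lead to the same conclusion.
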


\begin{proof}
Since $B_\psi^{\mathfrak{q}}$ is a finite rank perturbation of  $B_\psi$, and since $g$ is bounded on $A_\varepsilon^c$, it suffices to show  that
\begin{align}\label{I1}
I_1: =\iint_{| z| \ge R_\varepsilon, | w | \ge R_\varepsilon }  | g (z)-  g (w)|^2 | B_\psi(z,w)|^2  dv_\psi(z) dv_\psi(w) < \infty.
\end{align}
By  the definition \eqref{def-m-g} of  $g$, there exist $\alpha_1, \cdots, \alpha_\ell  \in\C$, such that 
\begin{align*}
g(z) = \left|   1  + \sum_{k=1}^\ell \frac{\alpha_k}{z-q_k} \right|^2.
\end{align*}
Hence for any $z, w\in A_\varepsilon^c$, we have
\begin{align*}
| g (z)-  g (w)|    \le \sup_{z, w\in A_\varepsilon^c}   \left(\Big|   1  + \sum_{k=1}^\ell \frac{\alpha_k}{z-q_k} \Big| + \Big|   1  + \sum_{k=1}^\ell \frac{\alpha_k}{w-q_k} \Big|\right)  \left|   \sum_{k=1}^\ell \frac{\alpha_k}{z-q_k}  -   \frac{\alpha_k}{w-q_k}   \right|.
\end{align*}
Note also that 
\begin{align*}
 \sup_{z, w\in A_\varepsilon^c}   \frac{ \left|   \frac{1}{z-q_k}  -   \frac{1}{w-q_k}   \right|}{ \left|  \frac{1}{z}  -   \frac{1}{w}   \right|} < \infty. 
\end{align*}
It follows that there exists $C_\varepsilon>0$, such that for any $z, w\in\C$ with $|z| \ge R_\varepsilon, |w| \ge R_\varepsilon$,  we have
\begin{align*}
| g (z)-  g (w)| \le C_\varepsilon \left| \frac{1}{z} - \frac{1}{w}\right|. 
\end{align*}
Now Christ's pointwise estimate, \eqref{pt-es} in Theorem \ref{Christ}, implies that for proving \eqref{I1}, it suffices to prove  
\begin{align}\label{I2}
I_2: = \iint_{| z| \ge R_\varepsilon, | w | \ge R_\varepsilon }  \left| \frac{1}{z} - \frac{1}{w}\right|^2  e^{- \delta | z-w|} d\lambda(z) d\lambda(w)< \infty.
\end{align}
To this end, we write
\begin{align*}
I_2   = &  \int_{| w | \ge R_\varepsilon }    d\lambda(w) \int_{| \zeta +w| \ge R_\varepsilon}  \frac{| \zeta|^2}{| w (w + \zeta)|^2 } e^{- \delta | \zeta|} d\lambda(\zeta) 
\\  
 =& \int_{| w | \ge R_\varepsilon }    d\lambda(w)   \int_{| \zeta +w| \ge R_\varepsilon}     \chi_{\{| w | \ge 2 | \zeta|\}} \frac{| \zeta|^2}{| w (w + \zeta)|^2 } e^{- \delta | \zeta|} d\lambda(\zeta) 
\\ 
& +  \int_{| w | \ge R_\varepsilon }    d\lambda(w)   \int_{| \zeta +w| \ge R_\varepsilon}     \chi_{\{| w | < 2 | \zeta|\}} \frac{| \zeta|^2}{| w (w + \zeta)|^2 } e^{-\delta | \zeta|} d\lambda(\zeta).
\end{align*}
The first integral is controlled by 
\begin{align*}
4  \int_{| w | \ge R_\varepsilon }    d\lambda(w)   \int_{\C}    \frac{| \zeta|^2}{| w|^4 } e^{- \delta | \zeta|} d\lambda(\zeta) < \infty,
\end{align*}
while the second integral is controlled by
\begin{align*}
& \int_{| w | \ge R_\varepsilon }    d\lambda(w)   \int_{\C}     \chi_{\{| w | < 2 | \zeta|\}} \frac{| \zeta|^2}{| R_\varepsilon w  |^2 } e^{-\delta | \zeta|} d\lambda(\zeta)\\ = & 2 \pi \int_{ 2 | \zeta| \ge R_\varepsilon} \log \left( \frac{2| \zeta|}{R_\varepsilon}\right) \frac{| \zeta|^2}{| R_\varepsilon   |^2 } e^{-\delta | \zeta|} d\lambda(\zeta)<\infty.
\end{align*} 
The proof of the lemma is complete.  
\end{proof}

\begin{lem}\label{lem-flatcut}
Let $g$ be the function defined by the formula \eqref{def-m-g}. We have 
\begin{align}\label{ginibre-flatcut}
\lim_{n\to \infty}\tr (\chi_{E_n} B_\psi^{\mathfrak{q}} | g-1|^2 \chi_{E_n^c} B_\psi^{\mathfrak{q}}  \chi_{E_n}   ) = 0.
\end{align}
\end{lem}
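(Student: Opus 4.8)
The plan is to estimate the trace expression directly using Remark \ref{HS-integral}, which rewrites it as a double integral:
\[
\tr (\chi_{E_n} B_\psi^{\mathfrak{p}} | g-1|^2 \chi_{E_n^c} B_\psi^{\mathfrak{p}}  \chi_{E_n}   ) = \int_{E_n} d\lambda(y)\, e^{-2\psi(y)} \int_{E_n^c} | g(x)-1|^2 | B_\psi^{\mathfrak{p}}(x,y)|^2 e^{-2\psi(x)} d\lambda(x).
\]
The key inputs are: first, since $B_\psi^{\mathfrak{p}}$ is a finite-rank perturbation of $B_\psi$ (Remark \ref{p-rem} reduces the problem to $B_\psi$ itself, with the finite-rank corrections handled by the uniform convergence in \eqref{kernel-Bq} together with the decay of $|g-1|$); second, Christ's pointwise estimate \eqref{pt-es}, which gives $| B_\psi(x,y)|^2 e^{-2\psi(x)-2\psi(y)} \le C e^{-\delta|x-y|}$; and third, the decay $|g(x)-1|^2 = O(1/|x|^2)$ as $|x|\to\infty$, so that on $E_n^c$ (where $|x| \ge \max(R_\varepsilon,n)$) we have $|g(x)-1|^2 \le C'/n^2$ for a suitable constant.

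First I would bound the inner integral: for fixed $y \in E_n$,
\[
\int_{E_n^c} | g(x)-1|^2 e^{-\delta|x-y|} d\lambda(x) \le \frac{C'}{n^2} \int_{\C} e^{-\delta|x-y|} d\lambda(x) = \frac{C''}{n^2},
\]
where the last integral is a finite constant independent of $y$ (it equals $2\pi/\delta^2$). Then integrating over $y \in E_n$, whose Lebesgue measure is $O(n^2)$, gives a bound of the form $C''' \cdot n^{-2} \cdot n^2$, which does not yet tend to zero. To get decay I would instead exploit that the kernel $e^{-\delta|x-y|}$ forces $x$ and $y$ to be within distance $O(1)$ of each other: since $x \in E_n^c$ and we want $y$ contributing, we really only need $y$ in an annular region near $|y| = n$, or else pay an exponentially small price $e^{-\delta(|x|-|y|)}$ in the separation. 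More precisely, split $E_n = (E_n \cap \{|y| \le n/2\}) \cup (E_n \cap \{|y| > n/2\})$: on the first piece $|x-y| \ge n/2$ for all $x \in E_n^c$, so the whole contribution is $O(n^2 \cdot e^{-\delta n/2})\to 0$; on the second piece, the $y$-region has measure $O(n^2)$ but there the relevant $x$ are again close to $y$, and combining $|g(x)-1|^2 = O(1/n^2)$ with the finite integral $\int e^{-\delta|x-y|}d\lambda(x)$ still only gives $O(1)$. So a cruder split is not enough.

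The cleanest route, and the one I expect to be the main point, is to estimate more carefully using the radial decay: write
\[
\int_{E_n} d\lambda(y)\, e^{-2\psi(y)} \int_{E_n^c} | g(x)-1|^2 | B_\psi(x,y)|^2 e^{-2\psi(x)} d\lambda(x) \le C \iint_{|y| \le N_n,\ |x| > N_n} \frac{e^{-\delta|x-y|}}{1+|x|^2}\, d\lambda(x)\, d\lambda(y),
\]
with $N_n = \max(R_\varepsilon, n)$, and then observe that for the integrand to be non-negligible one needs $|x-y| \lesssim \log(\text{stuff})$, so effectively $|y| \ge N_n - O(\log N_n)$; the $y$-integral is then over a thin annulus of area $O(N_n \log N_n)$, while the $x$-integral contributes $O(1/N_n^2)$ pointwise, times a bounded Gaussian-type factor $\int e^{-\delta|x-y|}d\lambda(x) = O(1)$. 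This yields a bound $O(N_n \log N_n / N_n^2) = O(\log N_n / N_n) \to 0$. I would make this rigorous by substituting $\zeta = x - y$, using Fubini, and splitting the $\zeta$-integral at $|\zeta| = |y|/2$ exactly as in the proof of Lemma \ref{lem-variance}: on $|\zeta| < |y|/2$ one has $|x| \ge |y|/2$ so $1/(1+|x|^2) \le 4/|y|^2$, and $\int_{|\zeta|<|y|/2} e^{-\delta|\zeta|} d\lambda(\zeta) \le 2\pi/\delta^2$; on $|\zeta| \ge |y|/2$ the factor $e^{-\delta|\zeta|} \le e^{-\delta|y|/2}$ is exponentially small and dominates the polynomial growth. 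Summing, the total is bounded by $C \int_{R_\varepsilon \le |y| \le N_n} |y|^{-2} d\lambda(y)$, whose tail $\int_{|y| \ge n} |y|^{-2} d\lambda(y)$ is divergent — so in fact I must keep the region $\{|y| \le N_n\}$ finite and note the integrand over the portion with $|y| \le N_n/2$ is handled by the exponential term. The main obstacle is thus the bookkeeping to extract genuine $n\to\infty$ decay rather than mere finiteness: one must combine the $O(1/|x|^2)$ gain from $|g-1|^2$ on $E_n^c$ with the fact that $B_\psi^{\mathfrak{p}}(x,y)$ is sharply concentrated near the diagonal, so that the relevant $(x,y)$ live near $|x| \approx |y| \approx n$ where $|g-1|^2$ is small, and the "good" region of $y$ shrinks relative to its Lebesgue measure — yielding a net bound that vanishes. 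I would present this as: fix $\varepsilon>0$, use \eqref{pt-es}, reduce to $B_\psi$ via Remark \ref{p-rem}, perform the change of variables $\zeta = x-y$, split at $|\zeta| = |y|/2$, and conclude by dominated convergence that the integral over $E_n \times E_n^c$ tends to $0$.
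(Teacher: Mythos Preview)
Your plan assembles the right ingredients—the finite-rank reduction to $B_\psi$ via Remark \ref{p-rem}, Christ's estimate \eqref{pt-es}, and the decay $|g(x)-1|^2=O(|x|^{-2})$—but none of the executions you propose actually yields $o(1)$; as you yourself observe, the crude bound gives $O(1)$, the split at $|y|=n/2$ gives $O(1)$ on the outer annulus, and the split at $|\zeta|=|y|/2$ (borrowed from Lemma \ref{lem-variance}, where the geometry is symmetric) leads you to $\int_{|y|\le N_n}|y|^{-2}d\lambda(y)\sim\log N_n$. The final appeal to dominated convergence in the $(x,y)$ or $(y,\zeta)$ variables cannot be justified: the natural dominator integrates to $\int_\C |g(x)-1|^2 B_\psi(x,x)e^{-2\psi(x)}d\lambda(x)$, which diverges (in the Ginibre case this is $\pi^{-1}\int_\C|g-1|^2\,d\lambda=\infty$, and this is precisely why one needs $\mathscr{A}_3(\Pi)$ rather than $\mathscr{A}_2(\Pi)$).

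The missing idea is to exploit the asymmetry of the region directly via the triangle inequality. Fix the \emph{outer} variable $w$ with $|w|>n$ and change variable $z=w+\zeta$; then $|z|\le n$ forces $|\zeta|\ge |w|-n$. This is the lower bound that produces genuine decay: the inner integral is at most $\int_{|\zeta|\ge |w|-n} e^{-\delta|\zeta|}d\lambda(\zeta)$, which in polar coordinates is $2\pi\int_{|w|-n}^{\infty} r e^{-\delta r}dr\le C e^{-\delta(|w|-n)/2}$. Integrating against $|w|^{-2}$ over $|w|>n$ and substituting $|w|=nt$ gives $C'\int_1^\infty t^{-1}e^{-\delta n(t-1)/2}\,dt$, and \emph{this} one-dimensional integral goes to $0$ by dominated convergence (dominator $t^{-1}e^{-\delta(t-1)/2}$, integrable on $[1,\infty)$). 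That is exactly the route the paper takes. Your split at $|\zeta|=|y|/2$ never sees the constraint $|x|>N_n$ in a way that forces $|\zeta|$ large, which is why it stalls at $O(1)$.
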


\begin{proof}
Since $B_\psi^{\mathfrak{q}}$ is a finite rank perturbation of $B_\psi$,  by Remark \ref{p-rem}, it suffices to check the same condition \eqref{ginibre-flatcut} for  the new pair $(g, B_\psi)$.
Applying again Christ's pointwise estimate \eqref{pt-es}, we have
\begin{align*}
I_3(n): = & \tr (\chi_{E_n} B_\psi | g-1|^2 \chi_{E_n^c} B_\psi  \chi_{E_n}   )  
 =  \| \chi_{E_n} B_\psi | g-1|\chi_{E_n^c} \|_{HS}^2 
 \\ 
 =  &  \intt_{| z| \le n}  \intt_{|w| \ge n}  | g(w) - 1|^2 |B_\psi(z,w)|^2  e^{-2\psi(z)-2 \psi(w)} d\lambda(z) d\lambda(w) 
 \\ 
 \le & C  \intt_{| z| \le n} \intt_{|w| \ge n} | g(w)-1|^2 e^{- \delta | z-w|}d\lambda(z)d\lambda(w) 
 \\ 
 \le & C'  \intt_{| z| \le n} \intt_{|w| \ge n} \frac{1}{|w|^2} e^{-\delta |z-w|} d\lambda(z)d\lambda(w)  = C'  \intt_{|w| \ge n} \frac{d\lambda(w)}{|w|^2}\intt_{| w + \zeta| \le n } e^{- \delta | \zeta|}d \lambda( \zeta) \\ 
  \le & C' \intt_{| w| \ge n}\frac{d\lambda(w)}{|w|^2} \intt_{  | w| - n \le | \zeta| \le | w|  + n  } e^{- \delta | \zeta|} d \lambda(\zeta)
  = 4\pi^2C' \int_{s\ge n} \frac{ds}{s} \int_{s-n}^{s+n} r e^{- \delta r } dr.
  \end{align*}
  Now since there exists $C''>0$, such that $r e^{- \delta r} \le C''e^{- \delta r /2}$ for all $r \ge 0$, we have
  \begin{align*}
  I_3(n) \le  C''' \intt_{s \ge n}\frac{e^{-\delta (s-n)/2}  }{s} ds  = C''' \int_{1}^\infty \frac{e^{-\delta n(t-1)/2}  }{t}dt.
\end{align*}
By dominated convergence theorem, we have $\lim_{n\to \infty} I_3(n) = 0$. 
\end{proof}

\begin{proof}[Proof of Theorem \ref{thm-A2}]
By Lemma \ref{lem-signularity-decay}, Lemma \ref{lem-variance} and Lemma \ref{lem-flatcut}, the conditions \eqref{singularity}, \eqref{decay}, \eqref{variance}, \eqref{flat-cut} are satisfied by the pair $(g, B_\psi^{\mathfrak{q}})$. Moreover, let 
$$
\alpha(z) = \frac{|g_{\mathfrak{p},\mathfrak{q}} (z)| }{g_{\mathfrak{p},\mathfrak{q}} (z) },
$$
then by Proposition \ref{prop-F-rel}, we have 
$$
\sqrt{g(z)} \mathscr{F}_\psi(\mathfrak{q}) = \alpha (z) g_{\mathfrak{p},\mathfrak{q}} (z)  \mathscr{F}_\psi(\mathfrak{q}) = \alpha(z) \mathscr{F}_\psi(\mathfrak{p}).
$$
Hence $\sqrt{g(z)} \mathscr{F}_\psi(\mathfrak{q})$ is a closed subspace of $L^2(dv_\psi)$. And   $(B_\psi^{\mathfrak{q}})^{g} =   \alpha B_\psi^{\mathfrak{p}} \bar{\alpha}$ is locally of trace class, this implies the condition \eqref{extra}. Now the formula  \eqref{p-q-rn}  of  Radon-Nikodym derivative $d\PP_{B_\psi}^{\mathfrak{p}}/d\PP_{B_\psi}^{\mathfrak{q}}$ follows from Theorem \ref{thm-C} and Remark \ref{rem-unit}. 
\end{proof}

\begin{rem}
Under the condition \eqref{sub-h}, we also have the same result as in Proposition \ref{prop-fock}.
\end{rem}

\subsection{Proof of Proposition \ref{thm-rigid}}

\begin{lem}\label{lem-d-norm}
There exists a constant $C>0$, depending only on $\psi$, such that for any $C^2$-smooth compactly supported function $\varphi: \C \rightarrow \R$, we have
\begin{align}\label{var-by-d}
\Var_{\PP_{B_\psi}} (S_\varphi)\le C \int_\C \| \nabla \varphi (w)\|_2^2 d\lambda(w).
\end{align}
\end{lem}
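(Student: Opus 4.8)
The plan is to compute the variance of a linear statistic $S_\varphi$ for a determinantal process with a Hermitian projection kernel and to bound it by the Dirichlet energy $\int_\C \|\nabla\varphi\|_2^2\,d\lambda$, using Christ's exponential off-diagonal decay (Theorem \ref{Christ}) as the essential input. Recall the standard variance formula for a determinantal point process with Hermitian correlation kernel $K=B_\psi$ (here a projection, so $K=K^2$):
\[
\Var_{\PP_{B_\psi}}(S_\varphi) \;=\; \frac12 \iint_{\C^2} |\varphi(z)-\varphi(w)|^2\, |B_\psi(z,w)|^2\, dv_\psi(z)\, dv_\psi(w),
\]
which is exactly $\Var(B_\psi,\varphi)$ in the notation of \eqref{var-pi-f}. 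So the whole task reduces to estimating this double integral. First I would insert Christ's bound \eqref{pt-es}: $|B_\psi(z,w)|^2 e^{-2\psi(z)-2\psi(w)} \le C e^{-\delta|z-w|}$, which turns the quantity into
\[
\Var_{\PP_{B_\psi}}(S_\varphi) \;\le\; \frac{C}{2} \iint_{\C^2} |\varphi(z)-\varphi(w)|^2\, e^{-\delta|z-w|}\, d\lambda(z)\, d\lambda(w).
\]

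Next I would change variables $w = z+\zeta$ and use the fundamental theorem of calculus along the segment from $z$ to $z+\zeta$:
\[
|\varphi(z)-\varphi(z+\zeta)| \;=\; \Big| \int_0^1 \nabla\varphi(z+t\zeta)\cdot \zeta \, dt \Big| \;\le\; |\zeta| \int_0^1 \|\nabla\varphi(z+t\zeta)\|_2\, dt .
\]
By Cauchy–Schwarz in $t$, $|\varphi(z)-\varphi(z+\zeta)|^2 \le |\zeta|^2 \int_0^1 \|\nabla\varphi(z+t\zeta)\|_2^2\, dt$. Substituting and applying Fubini (integrating $z$ first for each fixed $\zeta,t$, then noting $\int_\C \|\nabla\varphi(z+t\zeta)\|_2^2\,d\lambda(z) = \int_\C \|\nabla\varphi(w)\|_2^2\,d\lambda(w)$ by translation invariance), we get
\[
\Var_{\PP_{B_\psi}}(S_\varphi) \;\le\; \frac{C}{2} \Big( \int_\C \|\nabla\varphi(w)\|_2^2\, d\lambda(w) \Big) \int_\C |\zeta|^2 e^{-\delta|\zeta|}\, d\lambda(\zeta),
\]
and the last integral $\int_\C |\zeta|^2 e^{-\delta|\zeta|}\,d\lambda(\zeta) = 2\pi\int_0^\infty r^3 e^{-\delta r}\,dr = 12\pi/\delta^4$ is a finite constant depending only on $\delta$, hence only on $\psi$. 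Absorbing constants gives \eqref{var-by-d}.

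The only genuinely delicate point is justifying the variance identity itself and the use of Fubini: one should check that $\varphi\in\VV(B_\psi)$ (finiteness of $\Var(B_\psi,\varphi)$), which the computation above establishes a posteriori, and that $S_\varphi\in L^2(\PP_{B_\psi})$ so that \eqref{var} applies — this is immediate since $\varphi$ is bounded with compact support and $B_\psi(z,z)e^{-2\psi(z)}\le C$ by \eqref{d-es}, so all correlation integrals converge absolutely and Fubini is legitimate. I do not expect a serious obstacle here; the argument is a clean application of Christ's estimate combined with a first-order Taylor bound. The one thing to be careful about is the factor coming from the segment parametrization and making sure the translation-invariance step is applied to $d\lambda$ (the Lebesgue measure), not to $dv_\psi$ — the weight $e^{-2\psi}$ has already been entirely consumed by Christ's bound before the change of variables.
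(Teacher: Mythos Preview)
Your proof is correct and reaches the same conclusion as the paper, but the route after applying Christ's estimate is genuinely different. The paper, having reduced to
\[
\iint_{\C^2} |\varphi(\zeta+w)-\varphi(w)|^2 e^{-\delta|\zeta|}\,d\lambda(w)\,d\lambda(\zeta),
\]
invokes Plancherel in the $w$-variable to rewrite the inner integral as $\int_\C |e^{i2\pi\langle\xi,\zeta\rangle}-1|^2 |\widehat{\varphi}(\xi)|^2\,d\lambda(\xi)$, then uses the pointwise bound $|e^{i2\pi\langle\xi,\zeta\rangle}-1|\le 2\pi|\xi||\zeta|$ and Plancherel once more to recover $\int_\C\|\nabla\varphi\|_2^2$. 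You instead stay in physical space: the fundamental theorem of calculus along the segment, Cauchy--Schwarz in the parameter $t$, and translation invariance of Lebesgue measure do the job without any Fourier analysis. Your argument is more elementary and self-contained (and would work verbatim for, say, Lipschitz $\varphi$ once one replaces the FTC step by the Lipschitz bound); the paper's Fourier approach is the classical harmonic-analysis move and makes the link to the Dirichlet form $\int|\xi|^2|\widehat{\varphi}(\xi)|^2$ explicit. Either way the constant depends only on $\delta$, hence only on $\psi$ via Christ's theorem.
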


\begin{proof}

Let $\varphi: \C \rightarrow \R$ be a $C^2$-smooth compactly supported function. Our convention for the Fourier transform of $\varphi$ will be 
$$
\widehat{\varphi} (\xi) = \int_\C \varphi(w)  e^{- i 2\pi \langle w, \xi \rangle} d\lambda(w), \text{ where }\langle z, w \rangle : =  \Re (z) \Re (w) + \Im (z) \Im(w).
$$

By definition, we have 
$$
\Var_{\PP_{B_\psi}} (S_\varphi) = \frac{1}{2}\iint_{\C^2} | \varphi(z) - \varphi(w)|^2 | B_\psi(z,w)|^2 e^{-2\psi(z)- 2 \psi(w)} d\lambda(z) d\lambda(w).
$$
By Theorem \ref{Christ} and Plancherel identity for Fourier transform, we  obtain 
\begin{align*}
\Var_{\PP_{B_\psi}} (S_\varphi) & \le C \iint_{\C^2} | \varphi(z) - \varphi(w)|^2  e^{- \delta | z  - w|} d\lambda(z) d\lambda(w) 
\\
& =  C \iint_{\C^2} | \varphi(\zeta + w) - \varphi(w)|^2  e^{- \delta | \zeta|}  d\lambda(w) d\lambda(\zeta)
\\ 
& = C \iint_{\C^2} | e^{i 2 \pi \langle \xi , \zeta \rangle } - 1 |^2 | \widehat{\varphi} (\xi)|^2 e^{- \delta | \zeta|}    d\lambda(\xi) d\lambda(\zeta).
\end{align*}
Now since $| e^{i 2 \pi \langle \xi, \zeta \rangle } - 1|  = 2 |\sin ( \pi \langle \xi, \zeta \rangle ) |  \le 2 \pi | \xi| | \zeta|$, we have 
\begin{align*}
\Var_{\PP_{B_\psi}} (S_\varphi) 
&\le C'   \iint_{\C^2}  | \xi|^2 | \widehat{\varphi} (\xi)|^2  | \zeta|^2e^{- \delta | \zeta|}    d\lambda(\xi) d\lambda(\zeta)
\\ & \le C''\int_{\C}  | \xi|^2 | \widehat{\varphi} (\xi)|^2   d\lambda(\xi) = C'' \int_{\C} \| \nabla \varphi (w)\|_2^2 d\lambda(w).
\end{align*}
\end{proof}

\begin{proof}[Proof of Proposition \ref{thm-rigid}]
We will follow the argument of Ghosh and Peres \cite{Ghosh-rigid}. By Theorem \ref{thm-GP}, it suffices, for any fixed $R>0$ and any $\varepsilon > 0$, to construct a function  $\Phi_{\varepsilon, R} \in C_c^2(\C)$ such that  $\Phi_{\varepsilon, R} (z)  \equiv 1$ whenever $| z| \le R$ and $\Var_{\PP_{B_\psi}} (S_{\Phi_{\varepsilon, R}}) < \varepsilon$. 

Let $r_0 = 2 R$.  By Lemma \ref{lem-d-norm}, it suffices to construct a radial function $$\Phi_{\varepsilon, R}(z) = \phi_{\varepsilon, R}(|z|),$$ 
with $\phi_{\varepsilon, R}$ a function in $C_c^2(\R_{+})$ such that $\phi_{\varepsilon, R}|_{[0, r_0/2]} \equiv 1$ and 
$$
\int_{0}^\infty | \phi_{\varepsilon, R}'(r)|^2 rdr < \varepsilon.
$$ 
To this end, first we take $\tilde{\phi}_{\varepsilon, R}(r) = (1- \varepsilon \log^{+}(r/r_0))_{+}$, where $\log^{+} (x)=  \max (\log x, 0)$. Note that $\tilde{\phi}_{\varepsilon, R} |_{[r_0\exp(1/\varepsilon), \infty)} \equiv 0$ and $\tilde{\phi}_{\varepsilon, R}'(r) = - \varepsilon/r$ on the interval $(r_0, r_0 \exp(1/\varepsilon))$. Next we smooth the function $\tilde{\phi}_{\varepsilon, R}$ at the points $r_0$ and  $r_0\exp(1/\varepsilon)$ to obtain a function $\phi_{\varepsilon, R} \in C_c^2(\R_{+})$ such that  $\phi_{\varepsilon, R}$ identically equals to $1$ on $[0, r_0/2]$ and $\phi_{\varepsilon, R}'$ is supported on $[r_0/2, 2r_0\exp(1/\varepsilon)]$ such that $|\phi_{\varepsilon, R}'(r)| \le \varepsilon /r$ for all $r>0$. Hence we have 
$$
\int_{0}^\infty | \phi_{\varepsilon, R}'(r)|^2 rdr  \le \int_{r_0/2}^{2r_0\exp(1/\varepsilon)} \frac{\varepsilon^2}{r}dr = \varepsilon + \varepsilon^2 \log 4.
$$
This completes the proof of the proposition.
\end{proof}

\section{Case of $\D$}\label{sec-case-bergman}

\subsection{Analysis of the conditions on the weight $\omega$}\label{sec-w-disk}
Let $\omega: \D \rightarrow \R^{+}$ be a Bergman weight. We collect some known results from the literature on the sufficient conditions on the Bergman weight $\omega$, so that the inequality \eqref{w-condition}:
$$
\int_\D (1 - | z|)^2 B_\omega(z,z) \omega(z)d\lambda(z)< \infty
$$
 holds.

\begin{exam}[Classical weights]
Assume $\omega(z) = ( 1 - | z|^2)^{\alpha}, \alpha > -1$. Then
$$
B_\omega(z,w) =  \frac{\alpha+1}{\pi} \frac{1}{(1 - z \bar{w})^{\alpha + 2}},
$$
hence $(1 - | z|)^2 B_\omega(z,z) \omega(z)$ is bounded and the inequality \eqref{w-condition} holds.
\end{exam}

\begin{exam}[A class of logarithmatically superharmonic weights]
Let $\omega(z) = e^{-2\varphi(z)}.$
Assume  that 
\begin{itemize}
\item[1)]  $\varphi \in C^2(\D)$ and $\Delta \varphi > 0$;
\item[2)] the function $(\Delta \varphi (z))^{-1/2}$ is Lipschitz on $\D$; 
\item[3)] there exist  $C_1, a > 0$ and $0 < t < 1$, such that  
$$
(\Delta \varphi (z))^{-1/2} \le C_1 ( 1 - | z|);
$$  
$$
(\Delta \varphi (z))^{-1/2} \le (\Delta \varphi (w))^{-1/2} + t | z - w| \text{\, for \,} | z- w| > a (\Delta \varphi (w))^{-1/2}.
$$
\end{itemize}
By  \cite[Lemma 3.5]{Lin-Rochberg}, the weight $\omega$ is a Bergman weight and 
$$
\sup_{z \in \D}(1 - |z|)^2B_\omega(z,z) \omega(z) < \infty.
$$ 
Hence the inequality \eqref{w-condition} holds. Some concrete such examples are 
\begin{itemize}
\item $\omega(z) = (1- | z|^2)^\alpha \exp(h(z))$ with $ \alpha > 0$ and $h(z)$ any real harmonic function on $\D$;
\item $\omega(z) = (1 - | z|^2)^\alpha \exp(-\beta(1-| z|^2)^{-\gamma} + h(z))$ with $\alpha \ge 0, \beta>0, \gamma >0$ and $h(z)$ any real harmonic function on $\D$. 
\end{itemize}
\end{exam}

\begin{prop}
Let $\omega_1, \omega_2$ be two Bergman weights on $\D$ such that 
$$
\int_\D (1 - | z|)^2 B_{\omega_1} (z,z) \omega_2(z)d\lambda(z) < \infty.
$$
Let  $\omega$ be a Bergman weight on $\D$ and assume that there exist $c, C > 0$ such that 
$$
c \omega_1(z) \le \omega(z) \le C \omega_2(z).
$$
Then $\omega$ satisfies the condition \eqref{w-condition}.
\end{prop}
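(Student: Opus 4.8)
The plan is to exploit the standard extremal characterization of the diagonal of a reproducing kernel, which turns the pointwise comparison $c\omega_1 \le \omega \le C\omega_2$ of weights into a comparison of reproducing kernels.

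First I would recall that for a Bergman weight $\nu$, since $\nu$ is integrable the constant functions lie in $\mathscr{B}_\nu$, so $B_\nu(z,z)>0$ for every $z\in\D$, and the reproducing property $f(z)=\langle f, B_\nu(\cdot,z)\rangle$ together with Cauchy--Schwarz gives the extremal formula
\begin{align*}
\frac{1}{B_\nu(z,z)} \;=\; \inf\left\{\,\int_\D |f(w)|^2 \nu(w)\,d\lambda(w)\;:\; f\in\mathscr{O}(\D),\ f(z)=1\,\right\},
\end{align*}
where we may take the infimum over all holomorphic $f$ on $\D$, since $f\notin\mathscr{B}_\nu$ contributes an infinite value. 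Next, from $\omega(z)\ge c\,\omega_1(z)$ we get, for every holomorphic $f$ with $f(z)=1$,
\begin{align*}
\int_\D |f(w)|^2\omega(w)\,d\lambda(w) \;\ge\; c\int_\D |f(w)|^2\omega_1(w)\,d\lambda(w) \;\ge\; \frac{c}{B_{\omega_1}(z,z)};
\end{align*}
taking the infimum over such $f$ yields $B_\omega(z,z)\le c^{-1}B_{\omega_1}(z,z)$ for all $z\in\D$.

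Combining this bound with $\omega(z)\le C\,\omega_2(z)$ gives the pointwise estimate
\begin{align*}
(1-|z|)^2 B_\omega(z,z)\,\omega(z) \;\le\; \frac{C}{c}\,(1-|z|)^2 B_{\omega_1}(z,z)\,\omega_2(z),
\end{align*}
and integrating over $\D$ against $d\lambda$, together with the hypothesis $\int_\D (1-|z|)^2 B_{\omega_1}(z,z)\omega_2(z)\,d\lambda(z)<\infty$, shows that $\omega$ satisfies \eqref{w-condition}.

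There is essentially no serious obstacle here; the only point requiring a little care is the justification of the extremal formula (in particular that $B_\omega(z,z)>0$, which uses integrability of the Bergman weight so that constants belong to the space, and that the extremal problem may be posed over all of $\mathscr{O}(\D)$ rather than over $\mathscr{B}_\omega$). Everything else is monotonicity of an infimum and a single application of the comparison of the weights.
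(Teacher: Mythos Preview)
Your argument is correct and is essentially the paper's proof: the paper uses the equivalent extremal characterization $B_\nu(z,z)=\sup_{\|f\|_{\mathscr{B}_\nu}\le 1}|f(z)|^2$ to deduce $B_\omega(z,z)\lesssim B_{\omega_1}(z,z)$ from $c\,\omega_1\le\omega$, and then combines with $\omega\le C\omega_2$ exactly as you do. Your infimum formulation is just the dual statement of the paper's supremum formulation, and your constant $C/c$ is in fact the correct one (the paper's printed constant $c^2 C$ appears to be a typo).
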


\begin{proof}
Since $B_\omega(z,z)= \sup_{\n f\n_{\mathscr{B}_\omega \le 1}} | f(z)|^2,$
we have $B_{\omega}(z,z) \le c^2 B_{\omega_1}(z,z)$. By the assumption, we have
$$
\int_\D (1 - | z|)^2 B_{\omega} (z,z) \omega(z)d\lambda(z) \le c^2 C \int_\D (1 - | z|)^2 B_{\omega_1} (z,z) \omega_2(z)d\lambda(z) < \infty.
$$
\end{proof}

\begin{exam}
Let $\omega$ be a Bergman weight. Assume that there exist $c, C>0$ and  $\alpha, \beta$ satisfying either the inequality $ 0\ge \alpha \ge \beta > -1$ or  the inequality $\alpha \ge \beta > \alpha - 1 \ge -1$ and  such that 
$$
c (1 - |z|^2)^\alpha \le \omega(z) \le C (1-|z|^2)^\beta.
$$
Then $\omega$ satisfies the condition \eqref{w-condition}.
\end{exam}

\subsection{Proof of Theorem \ref{thm-B2} and Proposition \ref{prop-B}}\label{derivation-berg}
Let $k, \ell \in \N\cup \{0\}$, let  $\mathfrak{p} \in \D^\ell $ be an $\ell$-tuple of {\it distinct} points and $\mathfrak{q}\in \D^k$ a $k$-tuple of distinct points. Set 
$$
g(z) = | b_{\mathfrak{p}}(z) b_{\mathfrak{q}}(z)^{-1}|^2 = \prod_{j= 1}^\ell \left| \frac{z-p_j}{1 - \bar{p}_j z } \right|^2 \cdot \prod_{j=1}^k\left| \frac{1- \bar{q}_j z }{z-q_j} \right|^2.
$$

By virtue of Proposition \ref{prop-B-rel}, to prove Proposition \ref{prop-B} and hence Theorem \ref{thm-B2}, it suffices to prove that the pair $(g, B_\omega^{\mathfrak{q}})$ satisfies the assumption of Proposition \ref{prop-BQS}. This is done in the following

\begin{lem}
Take $\varepsilon > 0$ small enough and let  $D_\varepsilon = \bigcup_{i = 1}^k U_\varepsilon(q_i) $, where $U_\varepsilon (q_i)$ is a disc centred at point $q_i$ with radius $\varepsilon$  in $\D$. Then we have 
\begin{align}\label{two-integrals}
\int_{D_\varepsilon} | g(z)-1| B_\omega^{\mathfrak{q}}(z,z) \omega(z) d\lambda(z)+ \int_{D_\varepsilon^c} | g(z)-1|^2 B_\omega^{\mathfrak{q}}(z,z) \omega(z)d\lambda(z) <\infty.
\end{align}
\end{lem}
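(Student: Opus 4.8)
The plan is to verify the two finiteness statements in \eqref{two-integrals} separately, exploiting (i) the local behaviour of $g$ and of $B_\omega^{\mathfrak{q}}$ near the points $q_1,\dots,q_k$ for the first integral, and (ii) the global integrability assumption \eqref{w-condition} together with the boundary behaviour of $g$ for the second integral. The first key observation is that, since $B_\omega^{\mathfrak{q}}$ is the reproducing kernel of the Palm subspace $\mathscr{B}_\omega(\mathfrak{q})$, every $f\in\mathscr{B}_\omega(\mathfrak{q})$ vanishes at each $q_i$, and by the expansion $B_\omega^{\mathfrak{q}}(z,z)=\sum_j|f_j(z)|^2$ in terms of an orthonormal basis $(f_j)$ (convergence uniform on compacts), one gets, for $\varepsilon$ small, an estimate of the form $B_\omega^{\mathfrak{q}}(z,z)\le C_\varepsilon\,|z-q_i|^2$ for $z\in U_\varepsilon(q_i)$. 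On the other hand $g(z)=|b_{\mathfrak{p}}(z)b_{\mathfrak{q}}(z)^{-1}|^2$ has a pole of order $2$ at each $q_i$ coming from the factor $|z-q_i|^{-2}$ in $b_{\mathfrak{q}}(z)^{-1}$; thus $|g(z)-1|\le C'|z-q_i|^{-2}$ near $q_i$. Multiplying, $|g(z)-1|\,B_\omega^{\mathfrak{q}}(z,z)$ stays bounded on each $U_\varepsilon(q_i)$, and since $\omega$ is integrable (being a Bergman weight) and $E_\varepsilon$ is a finite union of small discs, the first integral in \eqref{two-integrals} is finite.

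For the second integral, the main point is that on $E_\varepsilon^c$ the function $g$ is bounded (all the poles of $g$ sit inside $E_\varepsilon$, and $g$ is continuous on the compact-in-$\D$ complement of $E_\varepsilon$ in any fixed sub-annulus), so $|g(z)-1|$ is bounded there; moreover, as $|z|\to 1^-$, each Blaschke factor $\bigl|\tfrac{z-p_j}{1-\bar p_j z}\bigr|\to 1$ and $\bigl|\tfrac{z-q_j}{1-\bar q_j z}\bigr|\to 1$, so $g(z)\to 1$, and in fact $|g(z)-1|=O(1-|z|)$ near the boundary by the standard Schwarz–Pick type estimate $\bigl|1-\bigl|\tfrac{z-a}{1-\bar a z}\bigr|^2\bigr| = \tfrac{(1-|a|^2)(1-|z|^2)}{|1-\bar a z|^2}\le C_a(1-|z|)$. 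Hence $|g(z)-1|^2\le C(1-|z|)^2$ on $E_\varepsilon^c$. Now compare $B_\omega^{\mathfrak{q}}(z,z)$ with $B_\omega(z,z)$: since $\mathscr{B}_\omega(\mathfrak{q})\subset\mathscr{B}_\omega$, one has $B_\omega^{\mathfrak{q}}(z,z)\le B_\omega(z,z)$, and therefore
\[
\int_{E_\varepsilon^c}|g(z)-1|^2 B_\omega^{\mathfrak{q}}(z,z)\,\omega(z)\,d\lambda(z)\le C\int_\D (1-|z|)^2 B_\omega(z,z)\,\omega(z)\,d\lambda(z)<\infty
\]
by assumption \eqref{w-condition}. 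Combining the two estimates yields \eqref{two-integrals}.

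The step I expect to require the most care is the local estimate $B_\omega^{\mathfrak{q}}(z,z)\le C_\varepsilon|z-q_i|^2$ near each $q_i$: one must check that all functions in $\mathscr{B}_\omega(\mathfrak{q})$ vanish \emph{simply} at $q_i$ with locally uniform control on the derivative, so that the square of the vanishing order exactly compensates the order-two pole of $g$. This follows from the reproducing property (the evaluation and first-derivative functionals on $\mathscr{B}_\omega$ are bounded, locally uniformly in a neighbourhood of $q_i$), but it is the place where Assumption \ref{Asump1}-type regularity of the kernel and the definition of $\mathscr{B}_\omega$ as a reproducing kernel Hilbert space of holomorphic functions are genuinely used. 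Everything else is a routine change of variables and the invocation of \eqref{w-condition}. Once \eqref{two-integrals} is established, Proposition 4.6 of \cite{BQS} applies to the pair $(g,B_\omega^{\mathfrak{q}})$ and, via Proposition \ref{prop-B-rel} and Remark \ref{rem-unit}, yields Proposition \ref{prop-B} and hence Theorem \ref{thm-B2}.
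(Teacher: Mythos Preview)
Your proposal is correct and follows essentially the same approach as the paper's own proof: for the first integral you use the quadratic vanishing $B_\omega^{\mathfrak{q}}(z,z)\le C_\varepsilon|z-q_i|^2$ near $q_i$ (the paper states this as $B_\omega^{\mathfrak{q}}(z,z)\le C\prod_i|z-q_i|^2$, which amounts to the same thing on each small disc) to cancel the order-two pole of $g$, and for the second integral you use the identity $\bigl|\tfrac{z-a}{1-\bar a z}\bigr|^2 = 1 - \tfrac{(1-|a|^2)(1-|z|^2)}{|1-\bar a z|^2}$ to obtain $|g(z)-1|\le C'(1-|z|)$ on $E_\varepsilon^c$, then the inequality $B_\omega^{\mathfrak{q}}(z,z)\le B_\omega(z,z)$ together with assumption \eqref{w-condition}. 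The paper's proof is slightly more terse but the logic and all the key estimates are identical.
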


\begin{proof}
By similar arguments as those in the proof of Lemma \ref{lem-signularity-decay}   or Remark \ref{rem-alt}, for $\varepsilon > 0$ small enough,  there exists $C>0$ such that for any  $z \in D_\varepsilon$, we have
$$
B_\omega^{\mathfrak{q}}(z,z)  \le C \prod_{i  =1}^k | z-q_i|^2,
$$
whence $| g(z)-1| B_\omega^{\mathfrak{q}}(z,z)$ is bounded on $D_\varepsilon$, and the first integral in \eqref{two-integrals} is bounded. 

For the second integral, the identities
$$
\left| \frac{z-p_j}{1 - \bar{p}_j z } \right|^2 = 1 - \frac{(1 - | z|^2) ( 1 - | p_j|^2) }{| 1 - \bar{p}_j z|^2},
$$
together with the same identities for $q_j: j = 1, \dots, k$, imply that there exists $C' > 0$ such that
$$
|g(z) -1|\le C'(1- |z|) \text{ for $z \in D_\varepsilon^c$.}
$$
Note also that since $\Ran (B_\omega^{\mathfrak{q}})\subset \Ran (B_\omega)$, we have 
$B_\omega^{\mathfrak{q}}(z,z) \le B_\omega(z,z)$, hence by our assumption \eqref{w-condition}, we have
$$
\int_{D_\varepsilon^c} | g(z)-1|^2 B_\omega^{\mathfrak{q}}(z,z) \omega(z)d\lambda(z) \le C' \int_{D_\varepsilon^c}(1-|z|)^2 B_\omega(z,z) \omega(z) d \lambda(z) < \infty.
$$
\end{proof}

\begin{lem}
The subspace $\sqrt{g} \cdot \Ran (B_\omega^{\mathfrak{q}}) $  is closed in  $L^2(\D, \omega d\lambda)$. Moreover,  for sufficiently large $R>0$, we have $\tr(\chi_{\{ g > R\} } [B_\omega^{\mathfrak{q}}]^g \chi_{\{ g>R\} }) <\infty$. 
\end{lem}

\begin{proof}
Note that  by Proposition \ref{prop-B-rel} and by defining a function $\alpha$ with $|\alpha(z)| =1$, given by
$$
\alpha(z)= \frac{|b_{\mathfrak{p}}(z) b_{\mathfrak{q}}(z)^{-1}|}{b_{\mathfrak{p}}(z) b_{\mathfrak{q}}(z)^{-1}}, \, z \in \D,
$$
we have 
\begin{align}\label{mod-1}
\sqrt{g} \cdot \Ran (B_\omega^{\mathfrak{q}}) = \alpha(z)  b_{\mathfrak{p}}(z) b_{\mathfrak{q}}(z)^{-1}  \mathscr{B}_\omega(\mathfrak{q})  =  \alpha(z) \mathscr{B}_\omega(\mathfrak{p}).
\end{align}
Since $ \mathscr{B}_\omega(\mathfrak{p})$ is a closed subspace in $L^2(\D, \omega d\lambda)$, so is  $\sqrt{g} \cdot \Ran (B_\omega^{\mathfrak{q}})$. By  \eqref{mod-1}, we have also  
$$
[B_\omega^{\mathfrak{q}}]^g = \alpha  \cdot B_\omega^{\mathfrak{p}}  \cdot  \overline{\alpha}.
$$
 It follows that, for sufficiently large $R>0$, since the set $\{z \in \D: g (z) > R\}$ is contained in a centered disk $\{z\in\D: |z| \le r\}$, with radius $r <1$, we have 
 \begin{align*}
\tr(\chi_{\{ g  > R\} }  [B_\omega^{\mathfrak{q}}]^g \chi_{\{ g>R\} })    = \tr(\chi_{\{ g  > R\} }  \alpha  \cdot B_\omega^{\mathfrak{p}}  \cdot  \overline{\alpha} \chi_{\{ g>R\} })      \le  \int_{|z| \le r}   B_\omega^{\mathfrak{p}}(z,z) \omega(z) d\lambda(z)< \infty. 
 \end{align*}
\end{proof}

\section{Proof of Theorem \ref{thm-C}}\label{proof-thm-C}

We start with an outline of our argument.

\begin{itemize} 
\item[(i)] Our first step is to define the regularized multiplicative functionals for functions $g$ such that the operator $\sqrt{|g-1|} \Pi \sqrt{|g -1|}$ belongs to the von Neumann-Schatten class $\mathscr{S}_3$.  In Definition \ref{def-A3}, we therefore introduce a class $\mathscr{A}_3(\Pi)$ of functions on $E$. We will see later in Proposition \ref{prop-log-A3}  and Proposition \ref{prop-general-reg} that if $g\in \mathscr{A}_3(\Pi)$, then the regularized multiplicative functional $\widetilde{\Psi}_g$  (cf. Definition \ref{def-mf-tilde}) is well-defined and integrable, the normalized multiplicative functional $\overline{\Psi}_g^\Pi$  is consequently well-defined.

\item[(ii)]  We prove in Proposition \ref{technical-prop} that if $g \in \mathscr{A}_3(\Pi)$ satisfies $\sup_E | g(x) - 1| < 1$, then the normalized generalized multiplicative functional $\overline{\Psi}_g^\Pi$ (see Definition \ref{def-mf-tilde}) is  well-defined and  the orthogonal projection onto the closed subspace $\sqrt{g}L$ induces a determinantal point process which coincides with $\overline{\Psi}_g^\Pi \PP_\Pi$.  The key step is the continuity, proved later in Proposition \ref{2-maps},  of the mapping that sends a function $g \in \mathscr{A}_3(\Pi)$  to $\overline{\Psi}_g^\Pi$. 

\item[(iii)] We derive Theorem \ref{thm-C} from Proposition \ref{technical-prop} by introducing a decomposition $g = g_1 g_2 g_3$ with $g_1\in \mathscr{A}_3(\Pi)$ that satisfies $\sup_E | g_1(x) - 1| < 1$ and  $g_2$ a compactly supported bounded function, $g_3$ a compactly supported function satisfying $g_3 >1$. The normalized regularized multiplicative functional $\overline{\Psi}_g^{\Pi}$ and the usual multiplicative functionals $\Psi_{g_2}$, $\Psi_{g_3}$ are all well-defined. We then write $\overline{\Psi}_{g}^\Pi = C \Psi_{g_3}\Psi_{g_2} \overline{\Psi}_{g_1}^{\Pi}$ and conclude the proof of Theorem \ref{thm-C}. 
\end{itemize}

\subsection{The class $\mathscr{A}_3(\Pi)$}
Recall that we denote by $\Pi$ an orthogonal projection on $L^2(E, \mu)$ which is locally in trace class. In \cite{BQS},  a class of Borel functions on $E$, denoted there by  $\mathscr{A}_2(\Pi)$,  plays a central role in the proof of the main result. By definition, $\mathscr{A}_2(\Pi)$ is the set of positive Borel functions $g$ on $E$ satisfying 
\begin{itemize}
\item[(1)] $0 < \inf\limits_E g \le \sup\limits_E g < \infty$; 
\item[(2)] $\int_E | g(x) - 1|^2 \Pi(x,x) d\mu(x) <\infty.$
\end{itemize}
If $g \in \mathscr{A}_2(\Pi)$, then the subspace $\sqrt{g}L$, where $L$ is the range of the orthogonal projection $\Pi$,  is automatically closed; we set $\Pi^g$ to be the corresponding operator of orthogonal projection.  The main property of $\mathscr{A}_2(\Pi)$ that will be used later is stated in the following
\begin{prop}[Cor. 4.2 of \cite{BQS}]\label{previous-prop}
If $g \in \mathscr{A}_2(\Pi)$ satisfies $\sup_E | g(x) - 1| < 1$, then the operator $\Pi^g$ is locally of trace class and the generalized multiplicative functional $\overline{\Psi}_{g}^{\Pi}$ are well-defined. Moreover, we have $\PP_{\Pi^g} = \overline{\Psi}_{g}^{\Pi}\cdot \PP_{\Pi}.$
\end{prop}

 Let $g: E \rightarrow \R$ be a Borel function, set
\begin{align}\label{def-L-f}
L(g): = \int_E | g(x) - 1|^3 \Pi(x,x) d\mu(x) \in [0, \infty]
\end{align}
and 
\begin{align}\label{def-V-f}
V(g): =   \iint_{E^2}    | g(x) - g(y) |^2   |\Pi(x,y)|^2 d \mu(x) d\mu(y) \in [0, \infty].
\end{align}
And then, we introduce a new class of Borel functions on $E$ as follows. 

\begin{defn}\label{def-A3}
 Let $\mathscr{A}_3(\Pi)$ be the set of positive Borel functions $g$ on $E$ satisfying 
\begin{itemize}
\item[(1)] $0 < \inf\limits_E g \le \sup\limits_E g < \infty$; 
\item[(2)] $L(g)  <\infty$ and $V(g) < \infty$;
\item[(3)] there exists an exhausting sequence $(E_n)_{n \ge 1}$ of bounded subsets of $E$, possibly depending on $g$,  such that 
\begin{align}\label{4-condition}
\lim_{n\to \infty}\tr (\chi_{E_n} \Pi | g-1|^2 \chi_{E_n^c} \Pi  \chi_{E_n}   ) = 0.
\end{align}
\end{itemize}
Moreover,  we introduce a topology $\mathscr{T}$ on $\mathscr{A}_3(\Pi)$ generated by the open sets
$$
U (\varepsilon, g) = \left\{ h \in \mathscr{A}_3(\Pi):  L(h/g) < \varepsilon, V(h/g) < \varepsilon \right\},
$$
In other words, a sequence $g_n$ converges to $g$ in $\mathscr{A}_3(\Pi)$ with respect to the topology $\mathscr{T}$ if and only if 
\begin{align}\label{tau-topo}
L(g_n/g) \to 0 \text{\, and \, } V(g_n/g) \to 0.
\end{align}

\end{defn}

Note that \eqref{4-condition} can equivalently be rewritten as
\begin{align}\label{4-condition-bis}
\lim_{n\to \infty} \iint_{E^2}  \chi_{E_n^c}(x) \chi_{E_n}(y) | g(x)-1|^2 | \Pi(x,y)|^2 d\mu(x) d\mu(y) =0.
\end{align}
 \begin{rem}
The sequence $(E_n)_{n\ge 1}$ in the definition of $\mathscr{A}_3(\Pi)$ is  an analogue of the sequence of the subsets $(\{z \in \C: | z| \le n\})_{n\ge 1}$ in the proof of Lemma \ref{lem-flatcut}. 
 \end{rem}

\begin{rem}
Note that the condition \eqref{4-condition} holds automatically for any $g \in \mathscr{A}_2(\Pi)$, hence we have 
$
\mathscr{A}_2(\Pi) \subset \mathscr{A}_3(\Pi) .
$
\end{rem}

\begin{rem}
Denote $[g, \Pi] := g \Pi - \Pi g$, then we have 
\begin{align}\label{commutator}
 V(g) = \| [g, \Pi]\|_{HS}^2,
 \end{align}
 where $\| \cdot \|_{HS}$ stands for the Hilbert-Schmidt norm. 
 \end{rem}

The main technical result in this section is the following 
\begin{prop}\label{technical-prop}
If $g \in \mathscr{A}_3(\Pi)$ satisfies $\sup_E | g(x) - 1| < 1$, then the operator $\Pi^g$ is locally of trace class and the generalized multiplicative functional $\overline{\Psi}_{g}^{\Pi}$ is well-defined. Moreover, we have $\PP_{\Pi^g} = \overline{\Psi}_{g}^{\Pi}\cdot \PP_{\Pi}$.
\end{prop}

\subsection{Derivation of Theorem \ref{thm-C} from Proposition \ref{technical-prop}}

 We now derive Theorem \ref{thm-C} from Proposition \ref{technical-prop}. The proof is similar to the proof of Proposition \ref{prop-BQS} given  in  \cite{BQS}. However, to prove the statement for $\mathscr{A}_3(\Pi)$ instead of $\mathscr{A}_2(\Pi)$ requires extra effort. 
 

By our assumption, we may choose $0<\varepsilon_1 < \varepsilon_2<1 $ and a {\it bounded} subset $E_{\mathrm{mid}} \subset E$, such that 
$$\{x\in E: | g(x)-1| \ge \varepsilon_2\} \subset E_{\mathrm{mid}} \subset \{x\in E: | g(x)-1| \ge \varepsilon_1\},
$$
 and the operator norm of $\chi_{\{x\in E: | g(x)-1| \le \varepsilon_2\}} \Pi$ is strictly less than $1$:
$$
\n \chi_{\{x\in E: | g(x)-1| \le \varepsilon_2\}} \Pi\n<1.
$$
Decompose $E_{\mathrm{mid}} = E_{\mathrm{mid}}^{+} \sqcup E_{\mathrm{mid}}^{-}$ by setting
$$
E_{\mathrm{mid}}^{+} = \{x\in E:  g(x) > 1 \} \cap E_{\mathrm{mid}} \et E_{\mathrm{mid}}^{-} = \{x\in E:  g(x) < 1 \} \cap E_{\mathrm{mid}}.
$$
Note that 
$$
E_{\mathrm{mid}}^{+} \subset \{x\in E: g(x) > 1+ \varepsilon_1\} \et E_{\mathrm{mid}}^{-} \subset \{x\in E: g(x) < 1-\varepsilon_1\}.
$$
Then we can decompose $g$ as $g = g_1 g_2 g_3$ with
\begin{align*}
g_1 &= (g-1)\chi_{E_{\mathrm{mid}}^c} +1,
\\
g_2& = (g-1)\chi_{E_{\mathrm{mid}}^{-}} + 1,
\\ 
g_3 & = (g-1)\chi_{E_{\mathrm{mid}}^{+}} + 1.
\end{align*}

\begin{claim}
 $g_1 \in \mathscr{A}_3(\Pi)$. 
\end{claim}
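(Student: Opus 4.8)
The plan is to check, one at a time, the four defining conditions of $\mathscr{A}_3(\Pi)$ for $g_1 := (g-1)\chi_{E_1^c}+1$, exploiting the following elementary structure of $g_1$: it equals $1$ off $E_1^c$, i.e. $g_1-1 = (g-1)\chi_{E_1^c}$; on $E_1^c$ one has $|g-1|<\varepsilon_2<1$ because $\{|g-1|\ge\varepsilon_2\}\subset E_1$; and $g_1=g$ on $E_1^c$ while $g_1\equiv 1$ on the \emph{bounded} set $E_1$. In particular $1-\varepsilon_2\le g_1\le 1+\varepsilon_2$, which gives condition (1) at once and also yields the auxiliary bound $\sup_E|g_1-1|\le\varepsilon_2<1$ that Proposition \ref{technical-prop} requires, cf. \eqref{tech-prop-assump}.

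Next I would fix once and for all an index $n_0$ with $E_1\subset E_{n_0}$ (possible since $E_1$ is bounded and $(E_n)_{n\ge1}$ exhausts $E$) and split every integral over $E_1^c$ into the part over $E_{n_0}^c$ plus the part over the bounded ``transition'' set $E_{n_0}\setminus E_1$. For condition (2): $L(g_1)=\int_{E_1^c}|g-1|^3\Pi(x,x)\,d\mu(x)$; the part over $E_{n_0}^c$ is finite by \eqref{decay}, while the part over $E_{n_0}\setminus E_1$ is bounded by $\varepsilon_2^3\int_{E_{n_0}}\Pi(x,x)\,d\mu(x)=\varepsilon_2^3\,\tr(\chi_{E_{n_0}}\Pi\chi_{E_{n_0}})<\infty$ since $\Pi$ is locally of trace class. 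For condition (4): take the exhausting sequence to be $(E_n)_{n\ge n_0}$; for $m\ge n_0$ we have $g_1-1=g-1$ on $E_m^c$, hence $\tr(\chi_{E_m}\Pi|g_1-1|^2\chi_{E_m^c}\Pi\chi_{E_m})=\tr(\chi_{E_m}\Pi|g-1|^2\chi_{E_m^c}\Pi\chi_{E_m})\to 0$ by \eqref{flat-cut}.

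Condition (3), namely $V(g_1)<\infty$, is the one requiring the most care, though it is still routine. Writing $h=g_1-1$ and using $V(g_1)=\|[h,\Pi]\|_{HS}^2$, I would decompose $h=h\chi_{E_{n_0}^c}+h\chi_{E_{n_0}\setminus E_1}$ and estimate the commutator by the triangle inequality. The piece $h\chi_{E_{n_0}\setminus E_1}$ is bounded with bounded support, so its commutator with $\Pi$ has finite Hilbert--Schmidt norm, using $\|b\Pi\|_{HS}^2=\int|b(x)|^2\Pi(x,x)\,d\mu(x)$ for the projection $\Pi$, bounded here by $\varepsilon_2^2\,\tr(\chi_{E_{n_0}}\Pi\chi_{E_{n_0}})$. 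For the piece $h\chi_{E_{n_0}^c}$, expanding $\|[h\chi_{E_{n_0}^c},\Pi]\|_{HS}^2$ according to whether each of the two variables lies in $E_{n_0}^c$ or in $E_{n_0}$ produces: the term on $E_{n_0}^c\times E_{n_0}^c$, which equals $\iint_{E_{n_0}^c\times E_{n_0}^c}|g(x)-g(y)|^2|\Pi(x,y)|^2\,d\mu(x)d\mu(y)$ and is finite by \eqref{variance}; the two mixed terms, each equal to $\iint_{E_{n_0}^c\times E_{n_0}}|g(x)-1|^2|\Pi(x,y)|^2\,d\mu(x)d\mu(y)\le\varepsilon_2^2\int_{E_{n_0}}\Pi(y,y)\,d\mu(y)<\infty$; and the term on $E_{n_0}\times E_{n_0}$, which vanishes. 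Summing these bounds gives $V(g_1)<\infty$ and completes the proof that $g_1\in\mathscr{A}_3(\Pi)$.

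The point worth underlining is that at no stage does one need the full integral $V(g)$ (which may diverge, e.g. in the Ginibre case); only the truncated bound \eqref{variance} together with the local trace-class property of $\Pi$ is used, which is exactly why working with $\mathscr{A}_3(\Pi)$ rather than $\mathscr{A}_2(\Pi)$ is both necessary and possible. Accordingly, there is no deep obstacle here: the only genuine difficulty is organizational, namely keeping track of which of \eqref{singularity}--\eqref{flat-cut} is invoked on which region and making sure that the bounded transition set $E_{n_0}\setminus E_1$ is always absorbed via $\tr(\chi_{E_{n_0}}\Pi\chi_{E_{n_0}})<\infty$.
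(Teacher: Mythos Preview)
Your proof is correct and follows essentially the same route as the paper: split $V(g_1)$ according to whether each variable lies inside or outside the bounded set, use \eqref{variance} for the ``outside--outside'' part, and handle the mixed terms via the local trace-class property (the paper cites \eqref{flat-cut} here, which gives the same finiteness). The only substantive difference is organizational: the paper splits directly along $E_1$ and $E_1^c$, whereas you insert an intermediate $E_{n_0}\supset E_1$ from the exhausting sequence so that the references to \eqref{decay}, \eqref{variance}, \eqref{flat-cut} match literally. This extra care is justified, since $E_1$ need not be one of the $E_n$, and your bound for the mixed term via $\varepsilon_2^2\,\tr(\chi_{E_{n_0}}\Pi\chi_{E_{n_0}})$ is a clean alternative to invoking \eqref{flat-cut}.
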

Indeed, the first two and the last conditions in the definition of $\mathscr{A}_3(\Pi)$ are immediate for $g_1$. We now check the third condition. We have
$$
| g_1(x) - g_1(y)| = 
\left\{
\begin{array}{cc} 
| g(x)- g(y)| & (x,y)\in E_{\mathrm{mid}}^c \times E_{\mathrm{mid}}^c
\\ | g(x)-1| & (x,y)\in E_{\mathrm{mid}}^c \times E_{\mathrm{mid}}
\\ | g(y)-1| & (x,y)\in E_{\mathrm{mid}} \times E_{\mathrm{mid}}^c
\\ 0 & (x,y)\times E_{\mathrm{mid}} \times E_{\mathrm{mid}}
\end{array},
\right.
$$
whence
\begin{align*}
V(g_1) =& \iint_{E^2}    | g_1(x) - g_1(y) |^2   |\Pi(x,y)|^2 d \mu(x) d\mu(y)
\\  = & \iint_{E_{\mathrm{mid}}^c\times E_{\mathrm{mid}}^c}    | g(x) - g(y) |^2   |\Pi(x,y)|^2 d \mu(x) d\mu(y)
\\ & +2   \int_{E_{\mathrm{mid}}}   d\mu(y) \int_{E_{\mathrm{mid}}^c} | g(x)-1|^2  | \Pi(x,y)|^2 d\mu(x).
\end{align*}
By \eqref{variance}, \eqref{flat-cut} and Remark \ref{HS-integral}, we have $V(g_1)< \infty.$

By Proposition \ref{technical-prop}, we have 
\begin{align}\label{g1-to-pi}
\PP_{\Pi^{g_1}} = \overline{\Psi}_{g_1}^{\Pi} \cdot \PP_{\Pi}.
\end{align}

The rest of the proof of Theorem \ref{thm-C}  follows the scheme of the proof of  Proposition \ref{prop-BQS} in \cite{BQS}. First, we have 
\begin{align}\label{3-g}
\Pi^{g_1g_2} = (\Pi^{g_1})^{g_2} \et \Pi^g = \Pi^{g_1g_2g_3} = (\Pi^{g_1g_2})^{g_3}.
\end{align}
Since $g_2$ is bounded and  $g_2-1$  is compactly supported, the usual multiplicative functional
$$
\Psi_{g_2}  (\XX) =  \prod_{x\in \XX} g_2(x),
$$
is well-defined and an application of the main result in \cite{Buf-multi} yields that 
\begin{align}\label{g12-to-g1}
\PP_{\Pi^{g_1g_2}} = C_1  \Psi_{g_2} \PP_{\Pi^{g_1}}.
\end{align}
The function $g_3-1$, although not necessarily bounded, is compactly supported and positive.  The usual multiplicative functional 
$\Psi_{g_3}$ is also well-defined for  $\PP_{\Pi^{g_1g_2}}$-almost every configuration. Indeed, since $g_1g_2$ is bounded and by \cite[Prop. 4.4]{BQS}, there exists $C>0$ such that 
$$
\Pi^{g_1g_2}(x,x) \le C \Pi(x,x).
$$
Consequently, we have
\begin{align}\label{support-g3}
\int_E |g_3(x)-1|\Pi^{g_1g_2}(x,x) d\mu(x)\le  C \int_{E_{\mathrm{mid}}^{+}} |g_3(x)-1|\Pi (x,x) d\mu(x) <\infty.
\end{align}
In the relation \eqref{support-g3}, we used the fact that $g_3 -1$ is supported on $E_{\mathrm{mid}}^{+}$ and our assumption \eqref{singularity}. 
It follows that     $\sqrt{g_3-1}\Pi^{g_1g_2} $ is Hilbert-Schmidt. By definitinon
$$
\sqrt{g_3} \cdot \Ran( \Pi^{g_1g_2}) = \sqrt{g_2} \sqrt{g_1g_2} L = \sqrt{g} L,
$$
hence by assumption of Theorem \ref{thm-C},  $\sqrt{g_3} \cdot \Ran( \Pi^{g_1g_2})$  is closed.  Moreover, by \eqref{3-g}, we have $(\Pi^{g_1 g_2})^{g_3} = \Pi^{g}$, which is locally of trace class by assumption.  For $R$ large enough, $g_3 > R$ implies $g> R$, hence by assumption \eqref{extra}, we have
$$
\tr(\chi_{\{ g_3 > R\} }  (\Pi^{g_1 g_2})^{g_3}\chi_{\{ g_3> R\} }) = \tr(\chi_{\{ g_3 > R\}}  \Pi^{g}\chi_{\{ g_3> R \} }) \le  \tr(\chi_{\{ g > R\} }  \Pi^{g}\chi_{\{ g> R\} }) < \infty. 
  $$
By  \cite[Prop. 4.10]{BQS}, we have 
\begin{align}\label{g-to-g12}
\PP_{\Pi^g} = C' \Psi_{g_3}\PP_{\Pi^{g_1g_2}}.
\end{align}
Combining \eqref{g1-to-pi}, \eqref{g12-to-g1} and \eqref{g-to-g12}, we get 
\begin{align}\label{mul-reg}
\PP_{\Pi^g} = C' \Psi_{g_3}\PP_{\Pi^{g_1g_2}} = C'C \Psi_{g_3} \Psi_{g_2} \cdot  \PP_{\Pi^{g_1}} = C'C\Psi_{g_3}\Psi_{g_2} \overline{\Psi}_{g_1}^{\Pi} \cdot\PP_{\Pi},
\end{align}
whence $\PP_{\Pi^g}  = \overline{\Psi}^{\Pi}_g \PP_\Pi$ and Theorem \ref{thm-C} is completely proved. 

\begin{rem}\label{rem-mul}
The following elementary observation is used in the equality \eqref{mul-reg}: if $g, h$ are two non-negative functions such that for $h-1$ is compactly supported and $\overline{\Psi}_g^{\Pi}$ is defined. Then the usual multiplicative functional $\Psi_h$ is well-defined: $\Psi_h(X)  = \prod_{x\in X} h(x)$. 
Moreover,  the regularized multiplicative functional  $\overline{\Psi}_{gh}^{\Pi} $ is well-defined and there exists a unique $C>0$, such that 
\begin{align*}
\overline{\Psi}_{gh} = C \Psi_h \cdot \overline{\Psi}_{g}. 
\end{align*}
Indeed, we have 
\begin{align*}
\log \widetilde{\Psi}_{gh} (X)  =& \lim_{n\to\infty} \Big( \sum_{x\in E_n} \log g(x) h(x)  - \E_{\PP_{\Pi}}  \sum_{x\in E_n} \log g(x) h(x) \Big) 
\\
 =& \lim_{n\to\infty} \Big( \sum_{x\in E_n} \log g(x)  - \E_{\PP_{\Pi}}  \sum_{x\in E_n} \log g(x) \Big) 
 \\
 & +  \lim_{n\to\infty} \Big( \sum_{x\in E_n} \log h(x)  - \E_{\PP_{\Pi}}  \sum_{x\in E_n} \log h(x) \Big) 
 \\
 = &  \log \widetilde{\Psi}_{g} (X)  + \sum_{x \in X} \log h(x) + \log C_1 = \log (C_1\widetilde{\Psi}_g(X) \Psi_h(X)).
\end{align*}
That is, $\widetilde{\Psi}_{gh}  = C_1\widetilde{\Psi}_g \Psi_h$.  It follows that 
\begin{align*}
\overline{\Psi}_{gh} = \frac{\widetilde{\Psi}_{gh} }{\E_{\PP_{\Pi}} [\widetilde{\Psi}_{gh}]}  =  \frac{\widetilde{\Psi}_g \Psi_h}{ \E_{\PP_{\Pi}} [\widetilde{\Psi}_g \Psi_h]} =   \frac{ \E_{\PP_{\Pi}} [\widetilde{\Psi}_g ] }{ \E_{\PP_{\Pi}} [\widetilde{\Psi}_g \Psi_h]} \cdot \frac{\widetilde{\Psi}_g}{\E_{\PP_{\Pi}} [\widetilde{\Psi}_g ]}   \Psi_h = C \Psi_h \overline{\Psi}_g,
\end{align*}
where $C = \frac{ \E_{\PP_{\Pi}} [\widetilde{\Psi}_g ] }{ \E_{\PP_{\Pi}} [\widetilde{\Psi}_g \Psi_h]}$ is uniquely determined.
\end{rem}

 \subsection{Convergence in $\mathscr{A}_3(\Pi)$}
We need the following  convergence properties of functions in $\mathscr{A}_3(\Pi)$.
 
 \begin{lem}\label{lem-cut}
Let $g \in \mathscr{A}_3(\Pi)$ and let $(E_n)_{n \ge 1}$ be the exhausting sequence of bounded subsets of $E$ such that condition \eqref{4-condition} holds. Denote $g_n = 1 + (g-1)\chi_{E_n}$, then $g_n \xrightarrow[n\to \infty]{\mathscr{T}} g.$
 \end{lem}
 \begin{proof}
Assume that $g\in \mathscr{A}_3(\Pi)$. First, by definition, we have
 $$
 | g_n/g -1| = | 1/g -1|\chi_{E_n^c}\le \frac{1}{\inf_E g} | g-1|.
 $$
 It follows that $L(g_n/g)\to 0$. 
 
 Next, setting 
 $$
 V_n(x,y) = |g_n(x) /g (x) - g_n(y)/g (y)|^2 | \Pi(x,y)|^2,
 $$
 we have 
 \begin{align}\label{3-term}
 V(g_n/g) = \iint\limits_{E_n \times E_n^c} V_n+ \iint\limits_{E_n^c \times E_n} V_n + \iint\limits_{E_n^c\times E_n^c} V_n.
 \end{align}
 The first and second terms in \eqref{3-term} are equal and 
 \begin{align*}
 \iint\limits_{E_n \times E_n^c} V_n &= \iint\limits_{E_n \times E_n^c} | 1 - 1/g(y)|^2 | \Pi(x,y)|^2  d\mu(x) d\mu(y) \\ & \le \frac{1}{\inf_E g^2}   \iint\limits_{E_n \times E_n^c} | g(y) -1|^2 | \Pi(x,y)|^2  d\mu(x) d\mu(y) \\  & = \frac{1}{\inf_E g^2}  \| \chi_{E_n}  \Pi  | g-1| \chi_{E_n^c}  \|_2^2\to 0.
 \end{align*}
The third term in \eqref{3-term} converges to 0 since 
 $$
  \iint\limits_{E_n^c\times E_n^c} V_n \le \frac{1}{\inf_E g^2}\iint\limits_{E_n^c\times E_n^c} | g(x)- g(y) |^2 | \Pi(x,y)|^2 d\mu(x) d\mu(y),
 $$
 and the latter integral tends to $0$ as $n\to \infty$.  Thus $V(g_n/g) \to 0$,  and  Lemma \ref{lem-cut} is completely proved.
 \end{proof}

\begin{lem}\label{LV-conv}
Let $g_n \in \mathscr{A}_3(\Pi), n \ge 1$, $g\in \mathscr{A}_3(\Pi)$, and assume that the sequence $(g_n)$ is uniformly bounded. 
If  $g_n \xrightarrow[n \to \infty]{\mathscr{T}} g$, then $L(g_n) \to L(g)$ and $V(g_n) \to V(g).$
\end{lem}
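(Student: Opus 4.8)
The plan is to reduce both convergences to the two quantities that the hypothesis $g_n \xrightarrow[n\to\infty]{\mathscr T} g$ controls. Write $h_n := g_n/g - 1$, so that $g_n = g(1+h_n)$ and the hypothesis says exactly that
$$
L(g_n/g) = \int_E |h_n(x)|^3\,\Pi(x,x)\,d\mu(x) \longrightarrow 0,
\qquad
V(g_n/g) = \|[h_n,\Pi]\|_{HS}^2 \longrightarrow 0 .
$$
Moreover $h_n$ is uniformly bounded: since $\sup_n\|g_n\|_\infty<\infty$ by assumption and $\inf_E g>0$, there is $M$ with $|h_n|\le M$ for all $n$, and $|g_n-1|,|g-1|$ are bounded by a common constant $M'$. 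Everything below estimates the relevant difference in terms of $L(g_n/g)$, $V(g_n/g)$ and the fixed finite quantities $L(g)$, $V(g)$, $\|g\|_\infty$.

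\emph{Convergence of $L$.} From $g_n-1=(g-1)+gh_n$, the elementary inequality $\bigl|\,|a|^3-|b|^3\,\bigr|\le 3\max(|a|,|b|)^2|a-b|$ with $a=g_n-1$, $b=g-1$ (so $a-b=gh_n$), together with $\max(|a|,|b|)^2\le 2|a-b|^2+2|b|^2$, gives
$$
\bigl|L(g_n)-L(g)\bigr|\le \int_E\bigl|\,|g_n-1|^3-|g-1|^3\,\bigr|\,\Pi(x,x)\,d\mu
\le 6\|g\|_\infty^3\!\int_E|h_n|^3\Pi(x,x)\,d\mu + 6\|g\|_\infty\!\int_E|g-1|^2|h_n|\,\Pi(x,x)\,d\mu .
$$
The first term equals $6\|g\|_\infty^3 L(g_n/g)\to 0$; in the second, Hölder's inequality with exponents $3/2$ and $3$ (relative to the measure $\Pi(x,x)\,d\mu$) bounds $\int_E|g-1|^2|h_n|\,\Pi(x,x)\,d\mu$ by $L(g)^{2/3}L(g_n/g)^{1/3}\to 0$. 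Hence $L(g_n)\to L(g)$.

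\emph{Convergence of $V$.} Using $V(f)=\|[f,\Pi]\|_{HS}^2$ and bilinearity of the commutator, $[g_n,\Pi]-[g,\Pi]=[g_n-g,\Pi]=[gh_n,\Pi]=g[h_n,\Pi]+[g,\Pi]h_n$, so by the reverse triangle inequality it suffices to show $\|g[h_n,\Pi]\|_{HS}\to 0$ and $\|[g,\Pi]h_n\|_{HS}\to 0$. The first is immediate: $\|g[h_n,\Pi]\|_{HS}\le\|g\|_\infty\|[h_n,\Pi]\|_{HS}=\|g\|_\infty\sqrt{V(g_n/g)}\to 0$. For the second, the kernel of $[g,\Pi]h_n$ is $(g(x)-g(y))\Pi(x,y)h_n(y)$, so
$$
\|[g,\Pi]h_n\|_{HS}^2=\int_E|h_n(y)|^2\,\Phi(y)\,d\mu(y),
\qquad
\Phi(y):=\int_E|g(x)-g(y)|^2|\Pi(x,y)|^2\,d\mu(x),
$$
with $\int_E\Phi\,d\mu=V(g)<\infty$. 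Since $\Pi$ is an orthogonal projection kernel, $|\Pi(x,y)|^2\le\Pi(x,x)\Pi(y,y)$, so $\Phi(y)=0$ whenever $\Pi(y,y)=0$. Split $\{\Pi(y,y)>0\}$ into $\{0<\Pi(y,y)\le\eta\}$ and $\{\Pi(y,y)>\eta\}$: the contribution of the first is $\le M^2\int_{\{0<\Pi(y,y)\le\eta\}}\Phi\,d\mu$, which is small uniformly in $n$ for $\eta$ small since these sets decrease to $\emptyset$ and $\Phi\in L^1$; on the second, Markov's inequality gives, for each fixed $\varepsilon>0$, $\mu\bigl(\{|h_n|>\varepsilon\}\cap\{\Pi(y,y)>\eta\}\bigr)\le(\varepsilon^3\eta)^{-1}L(g_n/g)\to 0$, so $h_n\to 0$ in measure on $\{\Pi(y,y)>\eta\}$ with respect to the finite measure $\Phi\,d\mu$, and since $|h_n|\le M$ the bounded convergence theorem yields $\int_{\{\Pi(y,y)>\eta\}}|h_n|^2\Phi\,d\mu\to 0$. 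Letting $n\to\infty$ and then $\eta\to 0$ gives $\|[g,\Pi]h_n\|_{HS}\to 0$, hence $\|[g_n,\Pi]-[g,\Pi]\|_{HS}\to 0$ and $V(g_n)\to V(g)$.

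\emph{The main obstacle.} Everything except the last convergence is routine bookkeeping. The delicate point is $\int_E|h_n|^2\Phi\,d\mu\to 0$: the hypothesis controls $h_n$ only through the cubic weighted integral $L(g_n/g)=\int|h_n|^3\Pi(x,x)\,d\mu$, which does not even yield $\int|h_n|^2\Pi(x,x)\,d\mu\to 0$ nor any uniform smallness of $h_n$, while $\Phi$ is merely integrable, not bounded or compactly supported. The resolution is that positive-definiteness of the projection kernel forces $\Phi$ to vanish wherever $\Pi(y,y)=0$, so one can localise to the regions $\{\Pi(y,y)>\eta\}$ on which the cubic control does translate, via Markov's inequality, into convergence of $h_n$ to $0$ in measure.
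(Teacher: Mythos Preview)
Your proof is correct. For $L(g_n)\to L(g)$ you and the paper do essentially the same thing: convert $L(g_n/g)\to 0$ into convergence of $g_n-1$ to $g-1$ in $L^3(\Pi(x,x)\,d\mu)$.

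For $V(g_n)\to V(g)$ you take a genuinely different route. The paper first passes to a subsequence along which $g_n\to g$ almost everywhere with respect to $\Pi(x,x)\,d\mu$, then uses the algebraic identity
\[
\frac{F_n(x,y)-F(x,y)}{g(x)}=\frac{g_n(x)}{g(x)}-\frac{g_n(y)}{g(y)}+\frac{F(x,y)(g_n(y)-g(y))}{g(x)g(y)}
\]
and dominated convergence (the $y$-marginal of $|\Pi(x,y)|^2\,d\mu(x)\,d\mu(y)$ is exactly $\Pi(y,y)\,d\mu(y)$, so a.e.\ convergence transfers). You instead write $[g_n,\Pi]-[g,\Pi]=g[h_n,\Pi]+[g,\Pi]h_n$ via the commutator Leibniz rule, kill the first piece by $V(g_n/g)\to 0$, and for $\|[g,\Pi]h_n\|_{HS}$ localize on $\{\Pi(y,y)>\eta\}$, where Markov's inequality turns $L(g_n/g)\to 0$ into convergence of $h_n$ to $0$ in measure. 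The key extra ingredient you need is the pointwise Cauchy--Schwarz $|\Pi(x,y)|^2\le\Pi(x,x)\Pi(y,y)$ to know that $\Phi$ vanishes on $\{\Pi(y,y)=0\}$; this is available here since $\Pi$ is a projection kernel satisfying Assumption~1. Your argument has the advantage of avoiding the subsequence reduction entirely; the paper's argument uses only the reproducing identity $\int|\Pi(x,y)|^2\,d\mu(x)=\Pi(y,y)$ and is slightly less sensitive to the structure of $\Pi$. Both are clean.
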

\begin{proof}
By definition, we have $L(g_n/g)\to 0$ and $V(g_n/g)\to 0$. 
 
The relation  $L(g_n/g)\to 0$ together with the inequality 
 $$
 \int | g_n(x)-g(x)|^3 \Pi(x,x) d\mu(x) \le  \sup_E g \cdot \int | g_n(x)/g(x) -1|^3 \Pi(x,x)d\mu(x) 
 $$
 implies that 
 $$
\lim_{n\to \infty} \n (g_n-1) - (g-1)\n_{L^3(E; \Pi(x,x)d\mu(x))}   = 0,
 $$
 whence 
 $$
 \lim_{n\to \infty}\n g_n-1\n_{L^3(E; \Pi(x,x)d\mu(x)} =\n g-1 \n_{L^3(E; \Pi(x,x)d\mu(x))}.
 $$
 This is equivalent to $L(g_n) \to L(g)$ as $n\to \infty$.
 
We turn to the proof of the convergence $V(g_n) \to V(g)$. It suffices to prove any convergent subsequence (in $[0, \infty]$) of the sequence $(V(g_n))_{n \ge 1}$ converges to $V(g)$. We have already shown  that 
$$
\intt_E | g_n(x) - g(x)|^3 \Pi(x,x) d\mu(x) \to 0.
$$ 
Passing perhaps to a subsequence, we may assume that $g_n \to g$ almost everywhere with respect to $\Pi(x,x) d \mu(x)$. 
Set 
$$
F_n(x,y) = g_n(x) - g_n(y) \et F(x,y) = g(x) - g(y).
$$
The desired relation $V(g_n) \to V(g)$ is equivalent to the relation
$$
\lim_{n\to \infty}\n F_n\n_{L^2(E\times E; \, | \Pi(x,y)|^2 d\mu(x)d\mu(y))} = \n F\n_{L^2(E\times E; \, | \Pi(x,y)|^2 d\mu(x)d\mu(y))}
$$
To simplify notation, write $dM_2(x,y) = | \Pi(x,y)|^2 d\mu(x)d\mu(y)$. It suffices to prove that 
\begin{align}\label{V-con}
\lim_{n\to \infty} \n F_n - F \n_{L^2(E\times E; \, dM_2)} = 0.
\end{align}
A direct computation shows that
\begin{align*}
 \frac{ F_n(x,y) - F(x,y)}{g(x)}   = \frac{g_n(x)}{g(x)}  -  \frac{g_n(y)}{g(y)} + \frac{F(x,y) (g_n(y) - g(y))}{g(x)g(y)}.
\end{align*}
Hence we have 
\begin{align*}
 | F_n(x,y) - F(x,y) |  \le \sup_E g \cdot  \left| \frac{g_n(x)}{g(x)} - \frac{g_n(y)}{g(y)}  \right|  +  \frac{1}{\inf_E g} | F(x,y)| \cdot | g_n(y) - g(y)|,
\end{align*}
and 
\begin{align*}
 \n F_n - F \n_{L^2(E\times E;  \, dM_2)} & \le \sup_E g \cdot  \left\n \frac{g_n(x)}{g(x)} - \frac{g_n(y)}{g(y)}  \right\n_{L^2(E\times E; \, dM_2)}  
\\ & +  \frac{1}{\inf_E g} \left\n F(x,y) \cdot | g_n(y) - g(y)| \right\n_{L^2(E\times E; \, dM_2)} 
\end{align*}
The limit relation $V(g_n/g) \to 0$ implies that 
$$
\lim_{n\to \infty}  \left\n \frac{g_n(x)}{g(x)} - \frac{g_n(y)}{g(y)}  \right\n_{L^2(E\times E; \, dM_2)}   = 0. 
$$
By definition,  $F \in L^2(E\times E; \, dM_2)$. Since the sequence $(g_n)$ is uniformly bounded and $g_n \rightarrow g$ almost everywhere with respect to $\Pi(x,x)d\mu(x)$,  the  dominated convergence theorem yields
$$
\lim_{n\to \infty}  \left\n F(x,y) \cdot | g_n(y) - g(y)| \right\n_{L^2(E\times E; \, dM_2)}  = 0.
$$
This completes the proof of \eqref{V-con}. Lemma \ref{LV-conv} is proved completely.
\end{proof}

\subsection{Existence of generalized multiplicative functionals}

Recall that,  in Definition \ref{d-v0} and Definition \ref{def-mf-tilde}, we introduced the subset $\VV_0(\Pi) \subset \VV(\Pi)$ and the functional $\widetilde{\Psi}_g$ for functions $g$ such that $\log g \in \VV_0(\Pi)$. Recall also that we introduced in \eqref{var-pi-f} the notation $\Var(\Pi, f)$ for any Borel function $f : E \rightarrow \C$. 

\begin{prop}\label{prop-log-A3}
If $g \in \mathscr{A}_3(\Pi)$, then $\Var(\Pi, \log g ) < \infty$ and $\log g \in \VV_0(\Pi)$. 
In particular, for any function $g \in \mathscr{A}_3(\Pi)$, the functional $\widetilde{\Psi}_g$ is well-defined. 
\end{prop}

\begin{proof}
By the third condition in the definition of $\mathscr{A}_3(\Pi)$, if $g \in \mathscr{A}_3(\Pi)$, then $$\Var (\Pi, g -1) < \infty.$$ 
Define a function 
$$ 
F(t) := \left\{ \begin{array}{cc} \frac{\log (1 + t) - t }{t^2} & \text{if $t \ne 0$} \\ 
 -\frac{1}{2} & \text{if $t =0$} \end{array} \right..
$$
Then $F$ is continuous on $(-1, \infty)$. It follows that for any $0<\varepsilon \le1$ and  $M \ge 1$, there exists $C_{\varepsilon,  M}>0$, such that if $t \in [-1 + \varepsilon, - 1 + M]$, then
\begin{align}\label{log-expansion}
\left| \log (1 +t) - t  \right| \le C_{\varepsilon, M} t^2.
\end{align}
By the first condition in the definition of $\mathscr{A}_3(\Pi)$, we can apply the above inequality to $g-1$. A simple computation yields
\begin{align}\label{log-difference}
\begin{split}\left|  \log g(x) - \log g(y)  \right|^2   \le  &20M^2 | g(x) - g(y)|^2 
\\ & + 8 M C_{\varepsilon, M}^2 (| g(x)-1|^3 + | g(y)-1|^3),
\end{split}
\end{align}
where $\varepsilon = \min (1, \inf_E g)$ and $M = \max(1, \sup_E g)$. Inequality \eqref{log-difference},  combined with the reproducing property of the kernel $\Pi$: 
$$
\Pi(x,x) = \int_E | \Pi(x,y)|^2 d\mu(y)
$$
and the second and third conditions on $g$ in the definition of $\mathscr{A}_3(\Pi)$, yields the desired inequality $\Var(\Pi, \log g) < \infty.$

We turn to the proof of the relation $\log g \in \VV_0(\Pi)$. By definition, there exists a sequence $(E_n)$ of exhausting bounded subsets of $E$, such that the relation \eqref{4-condition-bis} holds. It suffices to show that 
\begin{align}\label{to-do-1}
\lim_{n\to \infty}\n \chi_{E_n}\log g -  \log g \n_{\VV(\Pi)} = \lim_{n\to \infty} \n \chi_{E_n^c} \log g \n_{\VV(\Pi)} = 0.
\end{align}
We have
\begin{align*}
\n \chi_{E_n^c} \log g \n_{\VV(\Pi)}^2 
= & \frac{1}{2}  \iint_{E_n^c \times E_n^c}  | \log g(x) - \log g(y)|^2 | \Pi(x,y)|^2 d\mu(x) d\mu(y)
\\ &  + \frac{1}{2} \iint_{E^2} \chi_{E_n^c}(x) \chi_{E_n}(y) | \log g(x)|^2 | \Pi(x,y)|^2d\mu(x) d\mu(y)
\\ &+ \frac{1}{2} \iint_{E^2} \chi_{E_n^c}(y) \chi_{E_n}(x) | \log g(y)|^2 | \Pi(x,y)|^2d\mu(x) d\mu(y).
\end{align*}
Since $\Var(\Pi, \log g) < \infty$, the  first integral in the above identity tends to $0$ when $n$ tends to infinity follows.  The second and the third integrals are equal, and since $\varepsilon \le g \le M$, we may use $|\log g(x) | \le C_{\varepsilon, M} | g(x)-1|$ and we get
\begin{align}\label{2-int}
\begin{split}
&\iint_{E^2} \chi_{E_n^c}(x) \chi_{E_n}(y) | \log g(x)|^2 | \Pi(x,y)|^2d\mu(x) d\mu(y)
\\ 
\le &  
C_{\varepsilon, M}^2 \iint_{E^2} \chi_{E_n^c}(x) \chi_{E_n}(y) | g(x) - 1 |^2 | \Pi(x,y)|^2d\mu(x) d\mu(y).
\end{split}
\end{align} 
The assumption \eqref{4-condition-bis} implies that the last integral in \eqref{2-int} tends to $0$ as $n$ tends to infinity. This completes the proof of  the desired relation \eqref{to-do-1}. 
\end{proof}

\begin{defn}
Let $\mathscr{A}_3^{\varepsilon, M}(\Pi) \subset \mathscr{A}_3(\Pi) $ be the subset of functions such that
\begin{align}\label{future-reference}
\varepsilon \le \inf\limits_E g \le  \sup\limits_E g  \le M.
\end{align}
\end{defn}

\begin{prop}\label{prop-general-reg}
For any $\varepsilon, M:$ $0< \varepsilon \le 1$, $M \ge1$, there exists a constant $C_{\varepsilon, M}> 0$ such that if $g \in \mathscr{A}_3^{\varepsilon, M}(\Pi)$,
then 
\begin{align}\label{L1}
\log \E \widetilde{\Psi}_g \le C_{\varepsilon, M}  ( L(g)+V(g) ).
\end{align}
In particular, the normalized generalized multiplicative functional $\overline{\Psi}_g^\Pi$ is well-defined. 
\end{prop}

Denote 
$
g^{+} =  1 + \chi_{\{ g \ge 1\} } (g-1)  \et g^{-} = 1 + \chi_{\{ g \le 1\}} (g-1).
$
Then  $g = g^{+} g^{-}$ with $g^{+} \ge 1, g^{-} \le 1$. Our aim here is to reduce Proposition \ref{prop-general-reg} for $g$ to the same statement for $g^{+}, g^{-}$.
\begin{lem}\label{pm}
Both $g^{+}$ and $g^{-}$ are in the class $\mathscr{A}_3^{\varepsilon, M}(\Pi)$, moreover,   we have 
\begin{align}\label{LV}
L(g^{\pm}) \le L(g) \et V(g^{\pm}) \le V(g).
\end{align}
\end{lem}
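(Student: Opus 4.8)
The plan is to verify each of the four defining conditions of $\mathscr{A}_3^{\varepsilon,M}(\Pi)$ for $g^{+}$ and $g^{-}$, then establish the two pointwise/integral comparisons in \eqref{LV}. The key observation driving everything is the pointwise inequality
\begin{align*}
|g^{+}(x)-1| \le |g(x)-1| \et |g^{-}(x)-1| \le |g(x)-1|,
\end{align*}
which is immediate from the definitions $g^{+} = 1 + \chi_{g\ge 1}(g-1)$ and $g^{-} = 1 + \chi_{g\le 1}(g-1)$: on the set where the indicator vanishes the left side is $0$, and elsewhere the two sides agree. This at once gives $\varepsilon \le \inf_E g^{\pm} \le \sup_E g^{\pm} \le M$ (condition (1)) and, after cubing and integrating against $\Pi(x,x)\,d\mu(x)$, the bound $L(g^{\pm}) \le L(g) < \infty$, which is condition (2) together with the first half of \eqref{LV}.

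For condition (3), I would establish the pointwise comparison of increments: for all $x,y \in E$,
\begin{align*}
|g^{+}(x) - g^{+}(y)| \le |g(x) - g(y)| \et |g^{-}(x) - g^{-}(y)| \le |g(x)-g(y)|.
\end{align*}
This requires a short case analysis on whether $x$ and $y$ lie in $\{g \ge 1\}$ or its complement; in the mixed case one uses that if, say, $g(x) \ge 1 > g(y)$ then $g^{+}(x) - g^{+}(y) = g(x) - 1$ has absolute value at most $g(x) - g(y)$, since $g(y) < 1$, and symmetrically for $g^{-}$. Multiplying by $|\Pi(x,y)|^2$ and integrating over $E^2$ yields $V(g^{\pm}) \le V(g) < \infty$, i.e. condition (3) and the second half of \eqref{LV}. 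Finally, for condition (4), the very same pointwise bound $|g^{\pm}(x)-1|^2 \le |g(x)-1|^2$ shows that the integrand in \eqref{4-condition-bis} for $g^{\pm}$ is dominated by that for $g$, so the exhausting sequence $(E_n)$ witnessing \eqref{4-condition} for $g$ also works for $g^{+}$ and $g^{-}$; hence $g^{\pm} \in \mathscr{A}_3(\Pi)$, and combined with condition (1) we get $g^{\pm} \in \mathscr{A}_3^{\varepsilon,M}(\Pi)$.

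The main (and essentially only) obstacle is the increment comparison $|g^{\pm}(x)-g^{\pm}(y)| \le |g(x)-g(y)|$: it is the one step that is not a one-line consequence of monotonicity of a power, and the mixed case must be handled with the correct inequalities. Everything else is routine domination under the integral sign, so I would present the increment bound carefully and treat the remaining conditions briskly.
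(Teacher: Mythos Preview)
Your proposal is correct and follows essentially the same route as the paper's proof: both rest on the two elementary pointwise inequalities $|g^{\pm}-1|\le|g-1|$ and $|g^{\pm}(x)-g^{\pm}(y)|\le|g(x)-g(y)|$, from which conditions (1)--(3) and \eqref{LV} are immediate, while condition (4) follows by dominating the integrand in \eqref{4-condition-bis} (equivalently, the operator inequality the paper writes) using the first pointwise bound with the same exhausting sequence $(E_n)$. Your case analysis for the increment comparison is exactly the justification the paper leaves implicit when it calls these inequalities ``elementary''.
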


\begin{proof}
Inequalities \eqref{LV} follow from the elementary inequalities 
\begin{align}\label{flat}
| g^{\pm} - 1| \le | g - 1| \et | g^{\pm} (x) - g^{\pm}(y) | \le | g(x) - g(y)|.
\end{align}
Now let  $(E_n)_{n\ge 1}$ be the exhausting sequence of bounded subsets such that \eqref{4-condition} holds.  The first inequality in \eqref{flat} yields the following inequalities for self-adjoint operators: 
\begin{align*}
\chi_{E_n} \Pi | g^{\pm}-1|^2 \chi_{E_n^c} \Pi  \chi_{E_n}  \le   \chi_{E_n} \Pi | g-1|^2 \chi_{E_n^c} \Pi  \chi_{E_n}.
\end{align*}
Hence \eqref{4-condition} holds for $g^{\pm}$ with respect to the sequence $(E_n)_{n\ge 1}$. Consequently, $g^{\pm}$ are both in $\mathscr{A}_3^{\varepsilon, M}(\Pi)$.
\end{proof}

Denote by $\mathscr{A}_3^{\varepsilon, M}(\Pi)^{+}$  the subclass of  functions in $\mathscr{A}_3^{\varepsilon, M}(\Pi)$ such that 
$$
g \in \mathscr{A}_3(\Pi) \et  g \ge 1 .
$$ Similarly, denote by $\mathscr{A}_3^{\varepsilon, M}(\Pi)^{-}$  the subclass of  functions in $\mathscr{A}_3^{\varepsilon, M}(\Pi)$ such that 
$$ 
g \in \mathscr{A}_3^{\varepsilon, M}(\Pi) \et   g \le 1. 
$$
Let 
$$
\mathscr{A}_3^{\varepsilon, M}(\Pi)^{\pm} = \mathscr{A}_3^{\varepsilon, M}(\Pi)^{+} \cup \mathscr{A}_3^{\varepsilon, M}(\Pi)^{-}.
$$
 
We reduce the statement of Proposition \ref{prop-general-reg} for general $g \in \mathscr{A}_3^{\varepsilon, M}(\Pi)$ to the particular case $g \in \mathscr{A}_3^{\varepsilon, M}(\Pi)^{\pm}$. Indeed, assume that we have established \eqref{L1} in the case of $\mathscr{A}_3^{\varepsilon, M}(\Pi)^{\pm}$, then by multiplicativity, for general $g$ in $\mathscr{A}_3^{\varepsilon, M}(\Pi)$, we have 
\begin{align*}
 \E \widetilde{\Psi}_{g} & 
 = \E (\widetilde{\Psi}_{g^{+}} \widetilde{\Psi}_{g^{-}}) \le  (\E \widetilde{\Psi}_{g^{+}}^2 \cdot \E \widetilde{\Psi}_{g^{-}}^2)^{1/2}
  = (\E \widetilde{\Psi}_{(g^{+})^2} \cdot \E \widetilde{\Psi}_{(g^{-})^2})^{1/2} 
  \\
  & \le \frac{1}{2} (\E \widetilde{\Psi}_{(g^{+})^2} + \E \widetilde{\Psi}_{(g^{-})^2}).
\end{align*}
Now we may apply \eqref{L1} for functions $(g^{+})^2 \in \mathscr{A}_3^{\varepsilon, M}(\Pi)^{+}$ and $(g^{-})^2 \in \mathscr{A}_3^{\varepsilon, M}(\Pi)^{-}$ respectively and use the relations \eqref{semi-mul} together with Lemma \ref{pm} , to obtain that 
\begin{align*}
 \E \widetilde{\Psi}_g & \le C'\Big[L((g^{+})^2) + V((g^{+})^2) + L((g^{-})^2) + V((g^{-})^2  \Big]
 \\ & \le C'' \Big[L(g^{+}) + V(g^{+}) + L(g^{-}) + V(g^{-}))\Big]
 \\ & \le C''' (L(g) + V(g)).
\end{align*}

We  now proceed to the proof of \eqref{L1} for functions $g$ in $\mathscr{A}_3^{\varepsilon, M}(\Pi)^{\pm}$ and, consequently, Proposition \ref{prop-general-reg}.  By definition, if  $g \in \mathscr{A}_3^{\varepsilon, M}(\Pi)^{\pm}$, then the sequences $(g_n)_{n\ge 1}$ defined in the proof of Lemma \ref{lem-cut} all stay in the set $\mathscr{A}_3^{\varepsilon, M}(\Pi)^{\pm}$. Note that by the computation in \eqref{to-do-1}, we have 
\begin{align*}
& \| \overline{S}_{\log g_n} - \overline{S}_{\log g} \|_2^2 = \Var (\Pi, \log (g_n/g) )  =   \Var (\Pi,  \chi_{E_n^c} \log g ) = 2 \n \chi_{E_n^c} \log g \n_{\VV(\Pi)}^2  \xrightarrow{n\to\infty} 0.
\end{align*}
Consequently, passing perhaps to a subsequence, we may assume that  
$$
\overline{S}_{\log g_n} \xrightarrow[n\to \infty]{a.e.} \overline{S}_{\log g}  
$$ 
and hence  
$$ 
\widetilde{\Psi}_{g_n}   = \exp(\overline{S}_{\log g_n})  \xrightarrow[n\to \infty]{a.e.} \widetilde{\Psi}_{g} = \exp ( \overline{S}_{\log g}) .
$$ 
By Fatou's Lemma and Lemma \ref{LV-conv} , it suffices to establish \eqref{L1} for a function $g \in \mathscr{A}_3^{\varepsilon, M}(\Pi)^{\pm}$ such that the subset $\{x \in E: g(x)  \ne 1\}$ is bounded. We will assume the boundedness of $\{x \in E: g(x) \ne 1\}$ until the end of the proof of Proposition \ref{prop-general-reg}.

For any $0< \varepsilon \le 1$ and any $M\ge1$, there exists $C_{\varepsilon, M} > 0$ such that if $t \in [ -1 + \varepsilon, -1 + M]$, then
\begin{align} \label{log}
\left| \log (1+t) - t  + \frac{1}{2}t^2   \right| \le C_{\varepsilon, M} \cdot |t|^{3}.
\end{align}

Recall that for any bounded linear operator $A$ acting on a Hilbert space, we set $|A| =\sqrt{A^*A}$.  The inequality \eqref{log} applied to the eigenvalues of trace class operator with spectrum contained in $ [ -1 + \varepsilon, -1 + M]$ yields the following
\begin{lem}
Let $\varepsilon, M, C_{\varepsilon,M}$ be as in the inequality \eqref{log}. For any {\it self-adjoint} trace class operator $A$ whose spectrum  $\sigma(A)$ satisfies $\sigma(A) \subset [ - 1 + \varepsilon, - 1 + M]$, we have
\begin{align}\label{log-det}
\log \det (1 + A) \le    \tr (A ) - \frac{1}{2} \tr (A^2)  + C_{\varepsilon, M} \tr(|A|^{3}).
\end{align}
\end{lem}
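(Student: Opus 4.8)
The plan is to diagonalize the self-adjoint trace class operator $A$ and reduce the operator inequality \eqref{log-det} to the scalar inequality \eqref{log} applied eigenvalue by eigenvalue. Since $A$ is self-adjoint and trace class, the spectral theorem gives an orthonormal system of eigenvectors with real eigenvalues $(t_j)_{j \ge 1}$, and $\sum_j |t_j| = \tr(|A|) < \infty$; by hypothesis each $t_j \in \sigma(A) \subset [-1+\varepsilon, -1+M]$. The Fredholm determinant is $\det(1+A) = \prod_j (1 + t_j)$, and since each factor $1 + t_j \ge \varepsilon > 0$ the product is a well-defined positive number, so $\log \det(1+A) = \sum_j \log(1 + t_j)$, the series converging absolutely because $|\log(1+t_j)| \le C_{\varepsilon,M}' |t_j|$ on the relevant interval.

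First I would record the identities $\tr(A) = \sum_j t_j$, $\tr(A^2) = \sum_j t_j^2$, and $\tr(|A|^3) = \sum_j |t_j|^3$, all of which follow from the spectral decomposition (for $\tr(A^2)$ and $\tr(|A|^3)$ one uses that $A^2$ and $|A|^3$ are the positive operators with eigenvalues $t_j^2$ and $|t_j|^3$ on the same eigenbasis, together with the augmentation by zero on $\ker A$). Then I would apply \eqref{log} to $t = t_j$ for each $j$:
\begin{align*}
\log(1 + t_j) \le t_j - \frac{1}{2} t_j^2 + C_{\varepsilon, M} |t_j|^3.
\end{align*}
Summing over $j$ — which is legitimate since each of the three series on the right converges absolutely — yields
\begin{align*}
\log \det(1+A) = \sum_j \log(1+t_j) \le \sum_j t_j - \frac{1}{2}\sum_j t_j^2 + C_{\varepsilon,M} \sum_j |t_j|^3 = \tr(A) - \frac{1}{2}\tr(A^2) + C_{\varepsilon,M}\tr(|A|^3),
\end{align*}
which is exactly \eqref{log-det}.

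There is essentially no serious obstacle here; this lemma is a routine passage from a scalar estimate to its "spectral" operator counterpart. The one point that warrants a line of care is the justification that one may sum the scalar inequalities termwise and identify the resulting sums with the traces of $A$, $A^2$, and $|A|^3$: this requires $A \in \mathscr{S}_1$ (given), the containment $\sigma(A) \subset [-1+\varepsilon, -1+M]$ (given, and this is what makes $1 + t_j$ bounded away from $0$ so that $\log\det(1+A)$ is finite and the logarithmic series converges), and the elementary bound $|t|^3 \le (M-1)|t|$ for $t$ in the interval, which shows $\tr(|A|^3) < \infty$ and hence that the error term is finite. With these observations in place the proof is complete in a few lines.
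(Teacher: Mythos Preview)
Your proof is correct and follows exactly the same approach as the paper: diagonalize the self-adjoint trace class operator, apply the scalar inequality \eqref{log} to each eigenvalue, and sum. The paper's proof is in fact just the one-line observation that $\log\det(1+A) = \sum_i \log(1+\lambda_i(A))$ combined with \eqref{log}; your version simply supplies the routine justifications (absolute convergence, identification of the sums with the traces) that the paper leaves implicit.
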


\begin{proof}
The lemma is an immediate consequence of the inequality \eqref{log} and the identity
\begin{align*}
\log \det ( 1+ A) = \sum_{i = 1}^\infty \log (1 + \lambda_i(A)),
\end{align*}
where $(\lambda_i(A))_{i =1}^\infty$ is the sequence of the eigenvalues of $A$.
\end{proof}

In order to simplify notation, for $g \in   \mathscr{A}_3^{\varepsilon, M}(\Pi)^{+}$, set
\begin{align}\label{h+}
\text{$h = g-1\ge 0$ and $T_g^{+} = \sqrt{h} \Pi \sqrt{h} \ge 0$;}
\end{align}
and for $g\in \mathscr{A}_3^{\varepsilon, M}(\Pi)^{-}$, set
\begin{align}\label{h-}
\text{$h =  g-1 \le 0$ and  $T_g^{-} = \Pi h \Pi \le 0$.}
\end{align}

Applying the relation \eqref{log-det}, for $g \in \mathscr{A}_3^{\varepsilon, M}(\Pi)^{\pm}$, we have
\begin{align}\label{TF1}
\begin{split}
 \log \E \Psi_g &  = \log \det ( 1 + (g-1) \Pi)    = \log \det ( 1 + T_g^{\pm}) \\  &  \le     \tr(T_g^{\pm}) - \frac{1}{2} \tr((T_g^{\pm})^2)   + C_{\varepsilon, M} \tr(|T_g^{+}|^{3}).
 \end{split}
\end{align}
Clearly, the traces $\tr(T_g^{+})$ and $\tr(T_g^{-})$  are given by the formula: 
\begin{align}
\tr(T_g^{\pm}) = \intt_E h(x) \Pi(x,x) d\mu(x). 
\end{align}

Recall that the  inner product on the space of Hilbert-Schmidt operators is defined by the formula
$$
\langle a, b \rangle_{HS} = \tr(a b^{*}).
$$

\begin{lem}\label{square}
For any $g \in \mathscr{A}_3^{\varepsilon, M}(\Pi)^{\pm}$, we have 
\begin{align}\label{tg}
 \tr((T_g^{\pm})^2)  =   \intt_E h(x)^2 \Pi(x,x) d\mu(x)  - \frac{1}{2} V(g).
\end{align}
\end{lem}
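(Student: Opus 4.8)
The plan is to reduce both signs to the single quantity $\tr(h\Pi h\Pi)$, rewrite this as an explicit double integral against $|\Pi(x,y)|^2$, and then recognize the difference with $\int_E h(x)^2\Pi(x,x)\,d\mu(x)$ as $\frac{1}{2}V(g)$ by means of the reproducing identity of Assumption \ref{Asump1}.

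First I would unwind the squares of $T_g^{\pm}$. For $g\in\mathscr{A}_3^{\varepsilon,M}(\Pi)^{+}$, with $h=g-1\ge 0$ and $T_g^{+}=\sqrt h\,\Pi\sqrt h$ as in \eqref{h+}, the middle factor $\sqrt h\sqrt h=h$ gives $(T_g^{+})^{2}=\sqrt h\,\Pi h\,\Pi\sqrt h$, so cyclicity of the trace yields $\tr((T_g^{+})^{2})=\tr(h\Pi h\Pi)$. For $g\in\mathscr{A}_3^{\varepsilon,M}(\Pi)^{-}$, with $h\le 0$ and $T_g^{-}=\Pi h\Pi$ as in \eqref{h-}, the relation $\Pi^{2}=\Pi$ gives $(T_g^{-})^{2}=\Pi h\Pi h\Pi$, hence again $\tr((T_g^{-})^{2})=\tr(h\Pi h\Pi)$. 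Thus it suffices to handle $\tr(h\Pi h\Pi)$ once, and I would first record that the manipulations are legitimate: in the situation where the lemma is invoked, namely inside the proof of Proposition \ref{general-reg} after the reduction to $g$ with $\{x:g(x)\ne1\}$ bounded, the function $h$ is bounded with bounded support, whence $\|h\Pi\|_{HS}^{2}=\tr(h^{2}\Pi)\le\|h\|_{\infty}^{2}\,\tr(\chi_{\supp h}\Pi\chi_{\supp h})<\infty$ because $\Pi$ is locally of trace class; so $h\Pi h\Pi=(h\Pi)(h\Pi)$ is of trace class.

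Next I would compute $\tr(h\Pi h\Pi)$ at the level of kernels. Since $(h\Pi)(x,y)=h(x)\Pi(x,y)$ and $\Pi$ is self-adjoint, $\Pi(y,x)=\overline{\Pi(x,y)}$, so Fubini (the integrand being absolutely integrable) gives
$$
\tr(h\Pi h\Pi)=\iint_{E^{2}}h(x)h(y)\,|\Pi(x,y)|^{2}\,d\mu(x)\,d\mu(y).
$$
On the other hand, the reproducing identity $\Pi(x,x)=\int_{E}|\Pi(x,y)|^{2}\,d\mu(y)$ from Assumption \ref{Asump1} gives $\int_{E}h(x)^{2}\Pi(x,x)\,d\mu(x)=\iint_{E^{2}}h(x)^{2}|\Pi(x,y)|^{2}\,d\mu(x)\,d\mu(y)$, which by the symmetry $|\Pi(x,y)|=|\Pi(y,x)|$ also equals $\frac{1}{2}\iint_{E^{2}}(h(x)^{2}+h(y)^{2})|\Pi(x,y)|^{2}\,d\mu(x)\,d\mu(y)$. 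Subtracting the two displayed expressions,
$$
\int_{E}h(x)^{2}\Pi(x,x)\,d\mu(x)-\tr(h\Pi h\Pi)=\frac{1}{2}\iint_{E^{2}}\bigl(h(x)-h(y)\bigr)^{2}|\Pi(x,y)|^{2}\,d\mu(x)\,d\mu(y),
$$
and since $h=g-1$ gives $h(x)-h(y)=g(x)-g(y)$ with $g$ real-valued, the right-hand side is precisely $\frac{1}{2}V(g)$ by the definition \eqref{def-V-f}. Rearranging this last identity yields \eqref{tg}, uniformly in the two cases.

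The only genuine obstacle is the bookkeeping around finiteness and the applicability of Fubini and of cyclicity of the trace; this rests entirely on $h\Pi$ being Hilbert–Schmidt, which is why it is important that the lemma is applied only after reducing to $h$ of bounded support. For a general $g\in\mathscr{A}_3^{\varepsilon,M}(\Pi)^{\pm}$ one may either run the same computation with the truncations $g_{n}=1+(g-1)\chi_{E_{n}}$ from Lemma \ref{cut} and pass to the limit, or simply read \eqref{tg} as an identity in $[0,\infty]$; the algebraic core of the argument is unchanged.
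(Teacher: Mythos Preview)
Your proof is correct and follows essentially the same approach as the paper: both reduce $\tr((T_g^{\pm})^2)$ to $\tr(h\Pi h\Pi)$ via cyclicity and then identify the difference $\int_E h^2\,\Pi(x,x)\,d\mu-\tr(h\Pi h\Pi)$ with $\tfrac12 V(g)$. The only cosmetic difference is that the paper phrases this last step in Hilbert--Schmidt language, writing $\tr(h\Pi h\Pi)=\langle \Pi h, h\Pi\rangle_{HS}$ and expanding $V(g)=\|[h,\Pi]\|_{HS}^2=\|h\Pi\|_{HS}^2+\|\Pi h\|_{HS}^2-2\langle h\Pi,\Pi h\rangle_{HS}$, whereas you carry out the equivalent computation directly at the level of kernel integrals using the reproducing identity; the underlying algebra is identical.
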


\begin{proof}
If  $g \in \mathscr{A}_3^{\varepsilon, M}(\Pi)^{+}$, then 
\begin{align}\label{product}
\tr ((T_g^{+})^2) & = \tr(\sqrt{h} \Pi h \Pi\sqrt{h}) =   \tr(\Pi h \Pi h)   = \langle \Pi h, h\Pi      \rangle_{HS}.
\end{align}
Note that 
\begin{align}\label{HS}
\| \Pi h \|_{HS}^2 = \| h  \Pi \|_{HS}^2 = \intt_E h(x)^2 \Pi(x,x) d\mu(x).
\end{align}
By \eqref{commutator}, we have 
\begin{align}\label{VG}
\begin{split}
V(g) &= \| [g, \Pi]\|_{HS}^2 = \| [h, \Pi]\|_{HS}^2  = \| h \Pi- \Pi h\|_{HS}^2  \\ & =  \| h \Pi \|_{HS}^2 +   \|  \Pi  h \|_{HS}^2 - 2 \langle h \Pi, \Pi h  \rangle.
\end{split}
\end{align}
Combining \eqref{product}, \eqref{HS} and \eqref{VG}, we complete the proof of the desired identity \eqref{tg} for $g  \in \mathscr{A}_3^{\varepsilon, M}(\Pi)^{+}$. 

The argument for $g  \in \mathscr{A}_3^{\varepsilon, M}(\Pi)^{-}$ is the same, since we have 
$$
\tr ((T_g^{-})^2) = \tr (\Pi f \Pi f \Pi) = \tr (\Pi f \Pi f). 
$$
\end{proof}

\begin{lem}
For any $g \in \mathscr{A}_3^{\varepsilon, M}(\Pi)^{\pm}$, we have
\begin{align}\label{Tg3}
\tr(| T_g^{\pm}|^3) \le  L(g) = \intt_E  | g(x)-1|^3 \Pi(x,x) d\mu(x).
\end{align}
\end{lem}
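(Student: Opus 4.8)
The plan is to reduce both cases to a single trace inequality for the bounded nonnegative function $u:=|g-1|$ (which we also use to denote the corresponding operator of multiplication, as in the rest of the paper). When $g\in\mathscr{A}_3^{\varepsilon,M}(\Pi)^{+}$ we have $h=g-1=u\ge0$ and $T_g^{+}=\sqrt{u}\,\Pi\,\sqrt{u}\ge0$, so $|T_g^{+}|=T_g^{+}$, and expanding the cube and rearranging the trace cyclically gives $\tr(|T_g^{+}|^3)=\tr\big(\sqrt{u}\,\Pi u\Pi u\Pi\sqrt{u}\big)=\tr(\Pi u\Pi u\Pi u)$. When $g\in\mathscr{A}_3^{\varepsilon,M}(\Pi)^{-}$ we have $h=g-1\le0$, $T_g^{-}=\Pi h\Pi\le0$, hence $|T_g^{-}|=\Pi u\Pi$ with $u=-h=|g-1|$; since $\Pi^2=\Pi$, $\tr(|T_g^{-}|^3)=\tr\big((\Pi u\Pi)^3\big)=\tr(\Pi u\Pi u\Pi u)$. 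Thus in both cases it suffices to prove, for any bounded nonnegative Borel $u$ on $E$,
\begin{align*}
\tr(\Pi u\Pi u\Pi u)\le\int_E u(x)^3\,\Pi(x,x)\,d\mu(x),
\end{align*}
the inequality being understood in $[0,\infty]$ since $\Pi u\Pi u\Pi u=(\Pi u\Pi)^2\,u$ is a product of the positive operator $(\Pi u\Pi)^2$ and the positive multiplication operator $u$.

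The key point is the operator inequality $u\Pi u\le u^2$, which is immediate from $0\le\Pi\le I$: for $f\in L^2(E,\mu)$ one has $\langle u\Pi u f,f\rangle=\langle\Pi(uf),uf\rangle\le\|uf\|^2=\langle u^2 f,f\rangle$. Conjugating by $\Pi$ yields $(\Pi u\Pi)^2=\Pi(u\Pi u)\Pi\le\Pi u^2\Pi$, and since $u$ is a positive operator and the trace is monotone under $0\le X\le Y$ sandwiched by a positive operator, we obtain
\begin{align*}
\tr(\Pi u\Pi u\Pi u)=\tr\big((\Pi u\Pi)^2\,u\big)\le\tr\big(\Pi u^2\Pi\,u\big).
\end{align*}

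Finally, using $\Pi(y,x)=\overline{\Pi(x,y)}$ together with the reproducing identity $\int_E|\Pi(x,y)|^2\,d\mu(y)=\Pi(x,x)$ from Assumption~\ref{Asump1}, a direct kernel computation gives $\tr\big(\Pi u^2\Pi\,u\big)=\iint_{E^2}|\Pi(x,y)|^2\,u(x)\,u(y)^2\,d\mu(x)\,d\mu(y)$. Applying the weighted arithmetic--geometric mean inequality $u(x)u(y)^2\le\tfrac13 u(x)^3+\tfrac23 u(y)^3$ and the symmetry $|\Pi(x,y)|^2=|\Pi(y,x)|^2$, each of the two resulting integrals equals $\int_E u(x)^3\Pi(x,x)\,d\mu(x)=L(g)$, and the coefficients $\tfrac13$ and $\tfrac23$ add up to $1$; this gives $\tr(|T_g^{\pm}|^3)\le L(g)$.

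I expect the only technical nuisance to be justifying the cyclic rearrangements and the trace monotonicity when the operators involved need not be trace class a priori; this is handled by writing each trace as $\tr(C^{*}C)$ for a suitable operator $C$, so that everything is the trace of a positive operator valued in $[0,\infty]$, and invoking the standard facts $\tr(C^{*}C)=\tr(CC^{*})$ and $X\le Y\Rightarrow\tr(ZX)\le\tr(ZY)$ for $Z\ge0$. All the remaining computations are elementary.
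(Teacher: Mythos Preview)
Your proof is correct. Both your argument and the paper's begin the same way: after unifying the two cases to $\tr(\Pi u\Pi u\Pi u)$ with $u=|g-1|$, both invoke the operator inequality $u\Pi u\le u^2$ (equivalently $\Pi\le I$) and conjugate to obtain $\tr(\Pi u\Pi u\Pi u)\le \tr(\sqrt{u}\,\Pi u^2\Pi\,\sqrt{u})=\|\sqrt{u}\,\Pi u\|_{HS}^2$.

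The second step is where the two proofs diverge. The paper stays operator-theoretic: it writes $\|\sqrt{h}\,\Pi h\|_{HS}^2=\langle \Pi h^{3/2},\,h\Pi h^{1/2}\rangle_{HS}$ and applies Cauchy--Schwarz in the Hilbert--Schmidt inner product, obtaining the self-improving bound $\|\sqrt{h}\,\Pi h\|_{HS}\le\|\Pi h^{3/2}\|_{HS}$, whence $\|\sqrt{h}\,\Pi h\|_{HS}^2\le \tr(\Pi h^3)=L(g)$. You instead expand $\|\sqrt{u}\,\Pi u\|_{HS}^2$ as the double integral $\iint u(x)u(y)^2|\Pi(x,y)|^2\,d\mu(x)d\mu(y)$ and finish with the pointwise AM--GM inequality $u(x)u(y)^2\le\tfrac13 u(x)^3+\tfrac23 u(y)^3$ together with the reproducing identity $\int|\Pi(x,y)|^2 d\mu(y)=\Pi(x,x)$. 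The two endpoints coincide, of course; your route trades the clever Hilbert--Schmidt pairing for an explicit kernel computation, which makes the role of the reproducing property more visible. Your remark about interpreting all traces as $\tr(C^*C)\in[0,\infty]$ is exactly the right way to justify the cyclic moves and the monotonicity $\tr(\sqrt{u}X\sqrt{u})\le\tr(\sqrt{u}Y\sqrt{u})$ for $0\le X\le Y$ before any finiteness is known.
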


\begin{proof}
First, let $g \in \mathscr{A}_3^{\varepsilon, M}(\Pi)^{+}$. Recall the definition of $h$ and $T_g^{+}$ in \eqref{h+}. By the elementary operator inequality
$$
 \sqrt{h} \Pi h \Pi h \Pi \sqrt{h} \le \sqrt{h}\Pi h^2 \Pi \sqrt{h},
$$
we get 
\begin{align}\label{power3}
\tr(|T_g^{+}|^3) =  \tr (\sqrt{h} \Pi h \Pi h\Pi \sqrt{h})  \le \tr (\sqrt{h}\Pi h^2 \Pi \sqrt{h}) = \| \sqrt{h}\Pi h\|_{HS}^2.
\end{align}
Since 
\begin{align*}
\| \sqrt{h}\Pi h\|_{HS}^2 & = \tr (\sqrt{h}\Pi h^2 \Pi \sqrt{h}) = \tr (\Pi h^{3/2}  h^{1/2} \Pi h)    = \langle \Pi h^{3/2}, h \Pi h^{1/2}\rangle_{HS} \\ & \le \|\Pi h^{3/2}\|_{HS} \| h\Pi h^{1/2}\|_{HS} =  \|\Pi h^{3/2}\|_{HS} \| \sqrt{ h}\Pi h\|_{HS},
\end{align*}
we also have 
\begin{align}\label{power3-bis}
\| \sqrt{h}\Pi h \|_{HS}^2 \le \|\Pi h^{3/2}\|_{HS}^2 = \tr (\Pi h^3\Pi) = \tr(h^3 \Pi)  = L(g).
\end{align}
Combining inequalities \eqref{power3} and \eqref{power3-bis}, we obtain the desired inequality \eqref{Tg3} for $g \in  \mathscr{A}_3^{\varepsilon, M}(\Pi)^{+}$.

The inequality \eqref{Tg3} for $g \in  \mathscr{A}_3^{\varepsilon, M}(\Pi)^{-}$ is proved by noting that in this case, $T_g^{-} =   \Pi h \Pi =-  \Pi | h | \Pi $ and 
\begin{align*}
\tr (|T_g^{-}|^3)&= \tr( \Pi |h| \Pi | h| \Pi |h| \Pi) = \tr(  \sqrt{| h|} \Pi |h| \Pi |h| \Pi \sqrt{|h|}) \\ & \le \tr (\sqrt{| h|} \Pi |h|^2 \Pi \sqrt{|h|}).
\end{align*}
\end{proof}

\begin{proof}[Conclusion of the proof of Proposition \ref{prop-general-reg}]
It suffices to establish \eqref{L1} when $g \in \mathscr{A}_3^{\varepsilon, M}(\Pi)^{\pm}$. An application of \eqref{log} yields that 
\begin{align}\label{g-h}
\begin{split}
&\left| \int\limits_E    \left( \log g(x)     -  h(x) + \frac{h(x)^2}{2} \right)   \Pi(x,x) d\mu(x) \right|  \le  C_{\varepsilon, M} L(g).
\end{split}
\end{align}
It follows that
\begin{align*}
\log \E\widetilde{\Psi}_g  =& \log \E \Psi_g - \E S_{\log g}  \\  \le&  \tr (T_g^{\pm}) - \frac{1}{2} \tr ((T_g^{\pm})^2) + C_{\varepsilon, M} \tr ( |T_g^{\pm}|^3) - \E S_{\log g} \\ \le &  \intt_E h(x) \Pi(x,x) d\mu(x)  - \frac{1}{2} \intt_E h(x)^2 \Pi(x,x) d\mu(x) + \frac{1}{4} V(g) \\ & + C_{\varepsilon,M} L(g) -  \intt_E \log g(x) \Pi(x,x) d\mu(x)  \\ \le &   2 C_{\varepsilon, M}    L(g) + \frac{1}{4}V(g)  = C_{\varepsilon,M}' (L(g) + V(g)).
\end{align*}
\end{proof}

\subsection{Continuity and convergence of regularized multiplicative functionals}

\begin{prop}\label{general-reg}
For any $\varepsilon, M:$ $0< \varepsilon \le 1$, $M \ge1$, there exists a constant $C_{\varepsilon, M}> 0$ such that  if $g \in \mathscr{A}_3^{\varepsilon, M}(\Pi)$, then 
\begin{align}\label{L2}
\log \E |\widetilde{\Psi}_g|^2 \le C_{\varepsilon, M}  ( L(g) +V(g) ).
\end{align}
\end{prop}

\begin{proof}
 By definition $| \widetilde{\Psi}_g|^2 = \widetilde{\Psi}_{g^2}$. If $g \in \mathscr{A}_3^{\varepsilon, M}(\Pi)$, then 
  \begin{align}\label{semi-mul}
 L(g^2) \le 8M^3 L(g) \et V(g^2) \le 4M^2 V(g) .
 \end{align}
Consequently, Lemma follows immediately from  the estimate \eqref{L1} in Proposition \ref{prop-general-reg}.
\end{proof}

\begin{prop}\label{L1-prop}
Given $0< \varepsilon \le 1$ and $M \ge1$, there exists a constant $C_{\varepsilon, M}> 0$ such that if $g_1, g_2 \in \mathscr{A}_3^{\varepsilon, M}(\Pi)$, then 
\begin{align}\label{L1-cont}
\left(\E| \widetilde{\Psi}_{g_1} - \widetilde{\Psi}_{g_2} |  \right)^2 \le \E |\widetilde{\Psi}_{g_2}|^2  \cdot\bigg[\exp \Big(C_{\varepsilon, M} \big(L(g_1/g_2) + V(g_1/g_2)\big)  \Big) - 1 \bigg].
\end{align}
\end{prop}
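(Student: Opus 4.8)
The plan is to reduce the estimate to the $L^2$-bound of Proposition~\ref{general-reg} by exploiting the multiplicativity of the regularized functional. Since the isometric embedding $\overline{S}\colon\VV_0(\Pi)\to L^2(\Conf(E),\PP_\Pi)$ is linear and $\log g_1,\log g_2\in\VV_0(\Pi)$, one has $\log(g_1/g_2)=\log g_1-\log g_2\in\VV_0(\Pi)$ and $\overline{S}_{\log g_1}=\overline{S}_{\log g_2}+\overline{S}_{\log(g_1/g_2)}$, hence the pointwise identity $\widetilde{\Psi}_{g_1}=\widetilde{\Psi}_{g_2}\cdot\widetilde{\Psi}_{g_1/g_2}$. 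Thus $\widetilde{\Psi}_{g_1}-\widetilde{\Psi}_{g_2}=\widetilde{\Psi}_{g_2}(\widetilde{\Psi}_{g_1/g_2}-1)$, and since $\widetilde{\Psi}_{g_2}\in L^2(\PP_\Pi)$ by Proposition~\ref{general-reg}, the Cauchy--Schwarz inequality gives
\[
\Big(\E\big|\widetilde{\Psi}_{g_1}-\widetilde{\Psi}_{g_2}\big|\Big)^2\le \E|\widetilde{\Psi}_{g_2}|^2\cdot\E\big|\widetilde{\Psi}_{g_1/g_2}-1\big|^2 .
\]
It therefore suffices to show $\E|\widetilde{\Psi}_{g_1/g_2}-1|^2\le \exp\!\big(C_{\varepsilon,M}(L(g_1/g_2)+V(g_1/g_2))\big)-1$.

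For that, I would observe that $\widetilde{\Psi}_{g_1/g_2}=\exp(\overline{S}_{\log(g_1/g_2)})$ is a.s.\ positive and that $\overline{S}_{\log(g_1/g_2)}$ has vanishing expectation, so Jensen's inequality gives $\E\widetilde{\Psi}_{g_1/g_2}\ge 1$; consequently
\[
\E\big|\widetilde{\Psi}_{g_1/g_2}-1\big|^2=\E\widetilde{\Psi}_{g_1/g_2}^{\,2}-2\,\E\widetilde{\Psi}_{g_1/g_2}+1\le \E\widetilde{\Psi}_{g_1/g_2}^{\,2}-1=\E|\widetilde{\Psi}_{g_1/g_2}|^2-1 .
\]
Writing $g:=g_1/g_2$, the hypothesis $\varepsilon\le g_1,g_2\le M$ yields $\varepsilon/M\le g\le M/\varepsilon$, and the elementary bounds $|g-1|\le\varepsilon^{-1}(|g_1-1|+|g_2-1|)$ and $|g(x)-g(y)|\le\varepsilon^{-1}|g_1(x)-g_1(y)|+M\varepsilon^{-2}|g_2(x)-g_2(y)|$ show $L(g)<\infty$ and $V(g)<\infty$. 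Once it is known that $g\in\mathscr{A}_3^{\varepsilon/M,M/\varepsilon}(\Pi)$, Proposition~\ref{general-reg} applied to $g$ gives $\E|\widetilde{\Psi}_{g}|^2\le\exp\!\big(C_{\varepsilon/M,M/\varepsilon}(L(g)+V(g))\big)$, and renaming the constant $C_{\varepsilon/M,M/\varepsilon}$ as $C_{\varepsilon,M}$ closes the argument.

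It remains to justify $g_1/g_2\in\mathscr{A}_3^{\varepsilon/M,M/\varepsilon}(\Pi)$, the only nonobvious point being the flat-cut condition~\eqref{4-condition}, which refers to an exhausting sequence depending on the function and is not manifestly stable under ratios. I would handle this in two steps. First, for $g_1$ with $g_1-1$ of \emph{bounded} support: then $g_1\in\mathscr{A}_2(\Pi)\subset\mathscr{A}_3(\Pi)$, and $1/g_2\in\mathscr{A}_3(\Pi)$ with the \emph{same} exhausting sequence $(E_n)$ as $g_2$ (since $|1/g_2-1|^2\le\varepsilon^{-2}|g_2-1|^2$); writing $g_1/g_2-1=g_1(1/g_2-1)+(g_1-1)$, the flat-cut integral for $g_1/g_2$ along $(E_n)$ splits into a term controlled by that of $1/g_2$ (hence $\to 0$) and a term involving $g_1-1$ that \emph{vanishes} for $n$ large, because $E_n$ eventually contains $\supp(g_1-1)$; thus $g_1/g_2\in\mathscr{A}_3^{\varepsilon/M,M/\varepsilon}(\Pi)$ and the first two paragraphs establish \eqref{L1-cont} in this case. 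Second, for general $g_1\in\mathscr{A}_3^{\varepsilon,M}(\Pi)$, I would approximate $g_1$ (keeping $g_2$ fixed) by $g_{1,n}=1+(g_1-1)\chi_{E_n}$ as in Lemma~\ref{cut}, so that $g_{1,n}\in\mathscr{A}_3^{\varepsilon,M}(\Pi)$ with $g_{1,n}-1$ of bounded support and $g_{1,n}\xrightarrow{\mathscr{T}}g_1$; applying the inequality just proved to $(g_{1,n},g_2)$ and letting $n\to\infty$ along a subsequence on which $\widetilde{\Psi}_{g_{1,n}}\to\widetilde{\Psi}_{g_1}$ a.e.\ (a consequence of $\Var(\Pi,\log(g_{1,n}/g_1))\to 0$ together with \eqref{log-difference}), Fatou's lemma controls the left side while Lemma~\ref{LV-conv} gives $L(g_{1,n}/g_2)\to L(g_1/g_2)$ and $V(g_{1,n}/g_2)\to V(g_1/g_2)$ on the right side; the factor $\E|\widetilde{\Psi}_{g_2}|^2$ is unchanged since only $g_1$ is perturbed.

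The main obstacle is exactly this flat-cut bookkeeping for $g_1/g_2$ and the accompanying two-step limiting argument: the genuinely difficult analytic input, namely the $L^2$-estimate built on the \emph{cubic} quantity $L(g)$ (which is what makes $\mathscr{A}_3(\Pi)$ strictly larger than $\mathscr{A}_2(\Pi)$ and usable in the Ginibre regime), has already been carried out in Proposition~\ref{general-reg}, so no new delicate inequality is needed here --- one only has to organize the multiplicative splitting and the passage to the limit so that the stated constant and the factor $\E|\widetilde{\Psi}_{g_2}|^2$ come out correctly.
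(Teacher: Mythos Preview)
Your core argument---multiplicativity $\widetilde{\Psi}_{g_1}=\widetilde{\Psi}_{g_2}\widetilde{\Psi}_{g_1/g_2}$, Cauchy--Schwarz, the Jensen bound $\E\widetilde{\Psi}_{g_1/g_2}\ge 1$, and then the $L^2$-estimate of Proposition~\ref{general-reg} applied to $g_1/g_2$---is exactly the paper's proof; the paper sets $g=(g_1/g_2)^2$ and writes the same three inequalities in three lines.

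Where you go further than the paper is in your third and fourth paragraphs: the paper simply writes ``Applying Proposition~\ref{general-reg} to the function $g$'' without checking that $g=(g_1/g_2)^2$ lies in $\mathscr{A}_3(\Pi)$, and in particular without addressing condition~\eqref{4-condition}. You are right that this flat-cut condition is tied to an exhausting sequence that may differ for $g_1$ and $g_2$, so its stability under ratios is not automatic. Your two-step workaround---first treating the case where $g_1-1$ has bounded support (so that $g_1/g_2$ inherits the flat-cut sequence of $g_2$), then passing to the limit via Lemma~\ref{cut} and Fatou---is a clean and correct way to close this gap. Two small remarks: (i) your appeal to linearity of $\VV_0(\Pi)$ in the opening sentence faces the same exhausting-sequence issue, but you do not actually need it, since $\exp(\overline{S}_{\log g_1}-\overline{S}_{\log g_2})$ is well-defined regardless and your approximation argument supplies everything else; (ii) your invocation of Lemma~\ref{LV-conv} for the limit function $g_1/g_2$ nominally requires membership in $\mathscr{A}_3(\Pi)$, but inspection of that lemma's proof shows it uses only conditions (1)--(3), which you have verified, so the application is legitimate.
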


\begin{proof}
Let $g_1, g_2 \in \mathscr{A}_3^{\varepsilon, M}(\Pi)$. Set $g: = (g_1/g_2)^2 $. Applying Proposition \ref{general-reg} to the function $g$ yields
$$
\E \widetilde{\Psi}_{g} \le \exp\Big(C_{\varepsilon,M} \Big(L(g) + V(g)\Big) \Big) \le \exp\Big(C'_{\varepsilon,M} \Big(L(g_1/g_2) + V(g_1/g_2)\Big) \Big).
$$
By multiplicativity, we have 
\begin{align*}
\E| \widetilde{\Psi}_{g_1} - \widetilde{\Psi}_{g_2} |   & = \E \Big(| \widetilde{\Psi}_{g_1/g_2} -1 |  | \widetilde{\Psi}_{g_2} | \Big) \le \Big(\E | \widetilde{\Psi}_{g_2} |^2\Big)^{\frac{1}{2}} \Big( \E|\widetilde{\Psi}_{g_1/g_2} -1|^2\Big)^{\frac{1}{2}}.
\end{align*}
By Jensen's inequality  
$$
\E \widetilde{\Psi}_{g_1/g_2} = \E (\exp(\overline{S}_{\log (g_1/g_2)})  \ge   \exp[\E(\overline{S}_{\log (g_1/g_2)}) ] = 1. 
$$ 
It follows that
$$
 \E|\widetilde{\Psi}_{g_1/g_2} -1|^2 \le \E |\widetilde{\Psi}_{g_1/g_2}|^2 -1  = \E\widetilde{\Psi}_{g} -1.
$$
Combining the above inequalities, we obtain Proposition \ref{L1-prop}.
\end{proof}

Slightly abusing notation, we keep the notation $\mathscr{T}$  for the induced topology defined by \eqref{tau-topo} on $\mathscr{A}_3^{\varepsilon, M}(\Pi)$.  As an immediate consequence of Proposition \ref{L1-prop}, we have

\begin{prop}\label{2-maps}
The two mappings from $\mathscr{A}_3^{\varepsilon, M}(\Pi)$ to $L^1(\Conf(E), \PP_\Pi)$ defined by 
$$g \to \widetilde{\Psi}_g, \quad  g \to \overline{\Psi}_g $$
are continuous with respect to the topology $\mathscr{T}$ on $\mathscr{A}_3^{\varepsilon, M}(\Pi)$.
\end{prop}

\begin{proof}[Proof of Proposition \ref{technical-prop}]
The  proof follows the proof of Corollary 4.8 in \cite{BQS}. Indeed, let $g$ be a function such that $\sup_E | g(x) - 1| < 1$. Taking $g_n$ as in Lemma \ref{lem-cut}, we obtain the convergence of $\Pi^{g_n}$ to $\Pi^g$ in the space of locally trace class operators and hence the weak convergence of
$\PP_{\Pi^{g_n}}$ to $\PP_{\Pi^{g}}$ in the space of probability measures on $\Conf(E)$. By assumption, $g_n-1$ is compactly supported, so by Proposition 2.1 of \cite{Buf-inf}, we have
$$
\PP_{\Pi^{g_n}} = \overline{\Psi}_{g_n} \cdot \PP_\Pi.
$$
By Proposition \ref{2-maps}, $ \overline{\Psi}_{g_n} \to  \overline{\Psi}_{g}$ in $L^1(\Conf(E), \PP_\Pi)$, so we have
$$
\overline{\Psi}_{g_n} \cdot \PP_\Pi \to \overline{\Psi}_{g} \cdot \PP_\Pi
$$
weakly in the space of probability measures on $\Conf(E)$, whence  $\PP_{\Pi^g} = \overline{\Psi}_{g} \cdot \PP_\Pi.$
The proof Proposition \ref{technical-prop} is complete.
\end{proof}

\section{Appendix}\label{sec-app}

Our aim here is to show that Palm measures of different orders are {\it mutually singular} for a point process rigid in the sense of Ghosh \cite{Ghosh-sine}, Ghosh-Peres \cite{Ghosh-rigid}. 

Let $E$ be a complete metric space, and let $\PP$ be a probability measure on $\Conf(E)$ admitting 
correlation measures of all orders; the $k$-th correlation measure of $\PP$  is denoted by $\rho_k$.
Given  $B\subset E$ a bounded Borel subset, let $\F(E\setminus B)$ be the sigma-algebra generated by all events of the form $\{\#_C = n\}$ with 
$C\subset E\setminus B$ bounded and Borel, $n \in \N$, and let $\F^{\PP}(E\setminus B)$ be the completion of 
$\F(E\setminus B)$ with respect to $\PP$. We can canonically identify $\Conf(E)$ with $\Conf(B) \times \Conf( E \setminus B)$. Then in this identification,  the events in $\F(E \setminus B)$ have the form
$$
\Conf(B) \times A, 
$$ 
where $A \subset \Conf(E\setminus B)$ is a measurable subset. By definition, assume that $\mathscr{X} \in \F(E \setminus B)$, and let $(p_1, \dots, p_k) \in B^k$ be any $k$-tuple of distinct points, then  $\XX \in \mathscr{X}$ if and only if $\XX \cup \{ p_1, \dots, p_k\} \in \mathscr{X}$.
Recall that a point process with distribution $\PP$ on $\Conf(E)$ is said to be rigid if for any bounded Borel subset $B \subset E$, the function $\#_B$ is $\F^{\PP}(E\setminus B)$-measurable.

\begin{prop}\label{rigid-dis}
Let $B\subset E$ be a bounded Borel subset.  Assume that  the function $\#_B$ is $\F^{\PP}(E\setminus B)$-measurable. Then, for any $k,l\in {\mathbb N}$, $k\neq l$, for $\rho_k$-almost any $k$-tuple $(p_1, \dots, p_k) \in B^k$ and $\rho_l$-almost any $l$-tuple $(q_1, \dots, q_l) \in B^l$, the reduced Palm measures $\PP^{p_1, \dots, p_k}$ and $\PP^{q_1, \dots, q_l}$ are mutually singular.
\end{prop}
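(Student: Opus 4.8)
The plan is to distil from the rigidity hypothesis a single integer-valued Borel functional $\Psi$ on $\Conf(E)$ whose value equals $k$ almost surely under the order-$k$ reduced Palm measure $\PP^{p_1,\dots,p_k}$ when $p_1,\dots,p_k\in B$; since $k\ne l$, this functional separates $\PP^{p_1,\dots,p_k}$ from $\PP^{q_1,\dots,q_l}$ and mutual singularity follows at once. First I would fix a convenient version of the rigidity hypothesis: since $\#_B$ is $\F^{\PP}(E\setminus B)$-measurable, after changing it on a $\PP$-null set we may take a genuinely $\F(E\setminus B)$-measurable representative, that is, a Borel map $\Phi\colon\Conf(E\setminus B)\to\N\cup\{0\}$ with $\#_B(\XX)=\Phi(\XX\cap(E\setminus B))$ for $\PP$-a.e.\ $\XX$. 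Write $\mathscr G:=\{\XX\in\Conf(E):\#_B(\XX)=\Phi(\XX\cap(E\setminus B))\}$, so that $\PP(\mathscr G)=1$.

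Next I would transfer this full-measure event to the Palm measures by the disintegration formula \eqref{def-Palm}. Applying \eqref{def-Palm} to the non-negative test function $u(\XX;\mathfrak p)=\mathbf 1_{B^k}(\mathfrak p)\,\mathbf 1_{\mathscr G^c}(\XX)$ --- for which, thanks to the factor $\mathbf 1_{B^k}(\mathfrak p)$, the inner sum on the left effectively runs over the $k$-tuples of distinct points of the \emph{finite} set $\XX\cap B$ --- the left-hand side is an integral over the $\PP$-null set $\mathscr G^c$ and hence vanishes; therefore the right-hand side vanishes too, which means that for $\rho_k$-a.e.\ $\mathfrak p=(p_1,\dots,p_k)\in B^k$ we have, for $\PP^{\mathfrak p}$-a.e.\ $\XX$,
$$
\#_B\!\big(\XX\cup\{p_1,\dots,p_k\}\big)=\Phi\!\big((\XX\cup\{p_1,\dots,p_k\})\cap(E\setminus B)\big)=\Phi\!\big(\XX\cap(E\setminus B)\big),
$$
the last equality because $p_1,\dots,p_k\in B$ do not meet $E\setminus B$. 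Combining this with the standard fact (cf.\ \cite{Kallenberg}) that, for $\rho_k$-a.e.\ $\mathfrak p$, the reduced Palm measure $\PP^{\mathfrak p}$ is carried by the configurations $\XX$ disjoint from $\{p_1,\dots,p_k\}$ --- so that $\#_B(\XX\cup\{p_1,\dots,p_k\})=\#_B(\XX)+k$ for $\PP^{\mathfrak p}$-a.e.\ $\XX$ --- I obtain that the Borel function
$$
\Psi(\XX):=\Phi\!\big(\XX\cap(E\setminus B)\big)-\#_B(\XX)
$$
equals $k$ for $\PP^{\mathfrak p}$-a.e.\ $\XX$, for $\rho_k$-a.e.\ $\mathfrak p\in B^k$.

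Running exactly the same computation with $l$ in place of $k$ shows that $\Psi=l$ for $\PP^{\mathfrak q}$-a.e.\ $\XX$, for $\rho_l$-a.e.\ $\mathfrak q\in B^l$. As $\Psi$ is one and the same Borel function and $k\ne l$, the events $\{\Psi=k\}$ and $\{\Psi=l\}$ are disjoint and carry full mass under $\PP^{\mathfrak p}$ and $\PP^{\mathfrak q}$ respectively, whence $\PP^{\mathfrak p}\perp\PP^{\mathfrak q}$, which is the assertion. The only genuinely delicate point is ensuring, via the choice $\mathfrak p\in B^k$, that adjoining the Palm points leaves the $\F(E\setminus B)$-part of the configuration untouched while raising $\#_B$ by exactly $k$; this rests on the support property of reduced Palm measures, which I would state and justify explicitly (it is classical, and for simple point processes admitting correlation measures it follows from \eqref{def-Palm} itself applied to test functions of the form $\mathbf 1\{p_i\in\XX\}$). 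Everything else is routine bookkeeping around \eqref{def-Palm}.
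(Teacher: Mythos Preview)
Your proof is correct and follows essentially the same approach as the paper's: both extract from the rigidity hypothesis an $\F(E\setminus B)$-measurable function $\Phi$ (the paper packages it as the family of level sets $\mathscr{X}_n$), then use the Palm disintegration formula \eqref{def-Palm} to show that under $\PP^{\mathfrak p}$ with $\mathfrak p\in B^k$ the configuration lies in a set recording the integer $k$; your level set $\{\Psi=k\}$ is exactly the paper's set $\mathscr Y=\bigcup_{n\ge k}\mathscr X_n\cap\mathscr C_{n-k}$. Your treatment is mildly more streamlined---one application of \eqref{def-Palm} to $\mathbf 1_{\mathscr G^c}$ rather than a summation over $n$---and you are explicit about the support property of reduced Palm measures, which the paper's identity $\mathds 1_{\mathscr C_n}(\XX\cup\{p_1,\dots,p_k\})=\mathds 1_{\mathscr C_{n-k}}(\XX)$ uses tacitly.
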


\begin{rem}
After our preprint had appeared, S. Ghosh   studied the connection between rigidity and Palm measures of point processes  in  his preprint {\it Palm measures and rigidity phenomena in point processes}, 
arxiv:1509.00898, and, in particular, proved that rigidity implies singularity of Palm measures of different orders. Furthermore,  under additional assumptions on the conditional measures with respect to fixed configuration outside a bounded set,   Ghosh proved the mutual absolute continuity between Palm measures of the same order, in particular, treating the case of zero sets of Gaussian Analytic Functions on the plane and other non-determinantal point processes.  In our situation, however, we do not see how to check the assumptions that Ghosh needs without going through our argument. 
\end{rem}

\begin{proof}[Proof of Proposition \ref{rigid-dis}] 
For a nonnegative integer $n$, let 
$$
{\mathscr C}_n=\{\XX\in \Conf(E): \#_B(\XX)=n\}.
$$ 
By assumption, the function $\#_B$ is $\F^{\PP}(E \setminus B)$-measurable. Take a sequence $\mathscr{X}_n$ of disjoint $\F(E\setminus B)$-measurable subsets of $\Conf(E)$ such that for any nonnegative integer $n$ we have
$$
\PP(\mathscr{X}_n\Delta \mathscr{C}_n)=0.
$$
Set 
$$
\mathscr Y=\bigcup\limits_{n\ge k} \mathscr{X}_n\cap \mathscr{C}_{n-k};
$$
$$
\mathscr Z=\bigcup\limits_{n\ge l} {\mathscr X}_n\cap {\mathscr C}_{n-l}.
$$
The sets $\mathscr{Y}$and $\mathscr{Z}$ are disjoint by construction. 

{\bf Claim:} For $\rho_k$-almost any $k$-tuple $(p_1, \dots, p_k)$ and $\rho_l$-almost any $l$-tuple $(q_1, \dots, q_l)$ we have 
$$
\PP^{p_1, \dots, p_k}(\mathscr{Y})=1, \quad \PP^{q_1, \dots, q_l}(\mathscr{Z})=1.
$$

Indeed,  by definition of reduced Palm measures \eqref{def-Palm}, for any non-negative Borel function $u: \Conf(E) \times E^k\rightarrow \R $, we have
\begin{align}\label{def-palm}
\begin{split}
& \int\limits_{\Conf(E)}  \sum_{z_1, \dots, z_k \in \ZZ}^{*} u(\ZZ; z_1, \dots, z_k) \PP(d \ZZ)  \\ = &   \int\limits_{E^k} \rho_k(dp_1 \dots d p_k) \!\int\limits_{\Conf(E)} \!  u (\XX \cup \{p_1, \dots, p_k\}; p_1, \dots, p_k)  \PP^{p_1, \dots, p_k}(d\XX),
\end{split}
\end{align}
where $\sum\limits^{*}$ denotes the sum over  $k$-tuples of distinct points $z_1, \dots, z_k$ in $\ZZ$. 

For any $n \ge k$, substituting the function 
$$
u_n(\ZZ; z_1, \dots, z_k)  =  \mathds{1}_{\mathscr{X}_n  \cap   \mathscr{C}_n  } (\ZZ) \cdot \mathds{1}_{B^k}(z_1, \dots, z_k)
$$
into \eqref{def-palm}, we get
\begin{align}\label{palm2}
\begin{split}
& \int\limits_{\Conf(E)} \mathds{1}_{\mathscr{X}_n \cap \mathscr{C}_n} (\ZZ)   \sum_{z_1, \dots, z_k \in Z}^{*} \mathds{1}_{B^k}(z_1, \dots, z_k) \PP(d\ZZ) \\  = & \int\limits_{B^k} \rho_k(dp_1 \dots d p_k) \int\limits_{\Conf(E)}  \mathds{1}_{\mathscr{X}_n \cap\mathscr{C}_n } (\XX \cup \{p_1, \dots, p_k\}) \PP^{p_1, \dots, p_k} (d\XX).
\end{split}
\end{align}
Recall that by construction, $\mathscr{X}_n \in \F(E\setminus B)$, hence  {\it for all}   $p_1, \dots, p_k \in B$, we have 
\begin{align*}
\mathds{1}_{\mathscr{X}_n \cap\mathscr{C}_n } (\XX \cup \{p_1, \dots, p_k\})  =&   \mathds{1}_{\mathscr{X}_n } (\XX \cup \{p_1, \dots, p_k\}) \cdot  \mathds{1}_{\mathscr{C}_n } (\XX \cup \{p_1, \dots, p_k\}) \\ = & \mathds{1}_{\mathscr{X}_n } (\XX ) \cdot  \mathds{1}_{\mathscr{C}_{n-k} } (\XX)  = \mathds{1}_{\mathscr{X}_n  \cap \mathscr{C}_{n-k}} (\XX ).
\end{align*}
Substituting the above equality into \eqref{palm2}, we get 
\begin{align}\label{palm3}
\begin{split}
& \int\limits_{\Conf(E)} \mathds{1}_{\mathscr{X}_n \cap \mathscr{C}_n} (\ZZ)  \sum_{z_1, \dots, z_k \in \ZZ}^{*} \mathds{1}_{B^k}(z_1, \dots, z_k)\PP(d\ZZ) \\  = & \int\limits_{B^k}   \PP^{p_1, \dots, p_k} (\mathscr{X}_n  \cap \mathscr{C}_{n-k})\rho_k(dp_1 \dots d p_k).
\end{split}
\end{align}
Summing up the terms on the left hand side of \eqref{palm3} for $n \ge k$, we obtain the expression
\begin{align}\label{lhs}
\begin{split}
 & \sum_{n = k}^\infty \int\limits_{\Conf(E)} \mathds{1}_{\mathscr{X}_n \cap \mathscr{C}_n} (\ZZ)  \sum_{z_1, \dots, z_k \in \ZZ}^{*} \mathds{1}_{B^k}(z_1, \dots, z_k) \PP(d\ZZ) \\ = & \sum_{n = k}^\infty \int\limits_{\Conf(E)} \mathds{1}_{\mathscr{C}_n} (\ZZ)   \sum_{z_1, \dots, z_k \in \ZZ}^{*} \mathds{1}_{B^k}(z_1, \dots, z_k) \PP(d\ZZ) \\ = & \sum_{n = 0}^\infty \int\limits_{\Conf(E)} \mathds{1}_{\mathscr{C}_n} (\ZZ)   \sum_{z_1, \dots, z_k \in \ZZ}^{*} \mathds{1}_{B^k}(z_1, \dots, z_k) \PP(d\ZZ) \\ = &  \int\limits_{\Conf(E)}  \sum_{z_1, \dots, z_k \in \ZZ}^{*} \mathds{1}_{B^k}(z_1, \dots, z_k)\PP(d\ZZ) \\ = & \int\limits_{E^k} \mathds{1}_{B^k} (p_1, \dots, p_k) \rho_k(dp_1 \dots dp_k)  = \rho_k(B^k),
\end{split}
\end{align}
where we used the fact that if $n = 0, \dots, k-1$,  then 
$$
\forall \ZZ \in \mathscr{C}_n, \sum_{z_1, \dots, z_k \in \ZZ}^{*} \mathds{1}_{B^k}(z_1, \dots, z_k) =0.
$$
Similarly, summing up the terms on the right hand side of \eqref{palm3} for $n \ge k$, we obtain the expression
\begin{align}\label{rhs}
\begin{split}
 & \sum_{n = k}^\infty\int\limits_{B^k}   \PP^{p_1, \dots, p_k} (\mathscr{X}_n  \cap \mathscr{C}_{n-k})\rho_k(dp_1 \dots d p_k) \\ = & \int\limits_{B^k}   \PP^{p_1, \dots, p_k} \left(  \bigcup_{n \ge k}\mathscr{X}_n  \cap \mathscr{C}_{n-k} \right)\rho_k(dp_1 \dots d p_k) =   \int\limits_{B^k}   \PP^{p_1, \dots, p_k} \left( \mathscr{Y} \right)\rho_k(dp_1 \dots d p_k) .
\end{split}
\end{align}
By \eqref{palm3}, 
\begin{align}\label{palm4}
 \rho_k(B^k) = \int\limits_{B^k}   \PP^{p_1, \dots, p_k} \left( \mathscr{Y} \right)\rho_k(dp_1 \dots d p_k).
\end{align}
The equality \eqref{palm4} immediately implies that 
\begin{align*}
\PP^{p_1, \dots, p_k}(\mathscr{Y})= 1, \text{\, for $\rho_k$-almost any $k$-tuple $(p_1, \dots, p_k)\in B^k$}.
\end{align*}
The same argument yields that 
\begin{align*}
\PP^{q_1, \dots, q_l}(\mathscr{Z})= 1, \text{\, for $\rho_l$-almost any $l$-tuple $(q_1, \dots, q_l)\in B^l$}.
\end{align*}
The claim is proved, and Proposition \ref{rigid-dis} is proved completely.
\end{proof}

\section*{Acknowlegements}
We are deeply grateful to Alexander Borichev and Alexei Klimenko for  useful discussions. 
We are deeply grateful to the anonymous referees for their careful reading of the manuscript and 
many very helpful comments concerning presentation.
The authors were supported by A*MIDEX project (No. ANR-11-IDEX-0001-02), financed by Programme ``Investissements d'Avenir'' of the Government of the French Republic managed by the French National Research Agency (ANR).
The research of A. Bufetov on this project has received funding from the European Research Council (ERC) under the European Union's Horizon 2020 research and innovation programme under grant agreement No 647133 (ICHAOS). It has also been funded by the Grant MD 5991.2016.1 of the President of the Russian Federation, by  the Russian Academic Excellence Project `5-100' and by the Chaire Gabriel Lam\'e at the Chebyshev Laboratory.
 Y. Qiu is supported in part by the ANR grant 2011-BS01-00801.


\end{document}